\documentclass[12pt]{article}
\usepackage[margin=1in, a4paper]{geometry}

\usepackage{amsmath,enumerate,amsfonts,amssymb,color,graphicx,amsthm}

\usepackage{hyperref}

\usepackage{mathrsfs}

\def\RR{{\mathbb R}}

\def\SSphere{{\mathbb S}}

\newcounter{marnote}

\newtheorem{thm}{Theorem}[section]

\newtheorem{lem}[thm]{Lemma}

\newtheorem{prop}[thm]{Proposition}

\numberwithin{equation}{section}

\begin{document}

\title{Two-bump axisymmetric solutions\\ of the Nirenberg problem}

\author{YanYan Li \thanks{Department of Mathematics, Rutgers University, Hill Center, Busch Campus, 110 Frelinghuysen Road, Piscataway, NJ 08854, USA. Email: yyli@math.rutgers.edu.}~\thanks{Partially supported by NSF Grant DMS-2247410.}~ and Luc Nguyen \thanks{Mathematical Institute and St Edmund Hall, University of Oxford, Andrew Wiles Building, Radcliffe Observatory Quarter, Woodstock Road, Oxford OX2 6GG, UK. Email: luc.nguyen@maths.ox.ac.uk.}~ and Bo Wang \thanks{School of Mathematics and Statistics, Beijing Institute of Technology, No 5 Zhongguancun South Street, Haidian District, Beijing 100081, China. Corresponding author. Email: wangbo89630@bit.edu.cn.}~\thanks{Partially supported by NNSF 12271028.}}

\date{\small Dedicated to Jean-Michel Coron on the occasion of his 70th birthday
}

\maketitle

\begin{abstract}
In this paper, we revisit the axisymmetric Nirenberg problem for certain prescribed scalar curvature functions whose critical points include the north and south poles and whose order of flatness at both points is $n-2$. It has been known for some time that the solution set is compact when the parameter
points $(K_1,K_2,a_1,a_2)\in\mathbb R^4$, determined by the first two leading expansions of the prescribed scalar curvature functions  at the poles, stay away from a critical three-dimensional hypersurface $\Sigma^0$, and that loss of compactness occurs near $\Sigma^0$. 
We construct, in dimensions $n \geq 4$, two-bump axisymmetric solutions for parameter points sufficiently close to $\Sigma^0$ from one side, where the relevant side is determined by the next-order flatness terms. This gives a one-sided refinement of the previously known blow-up phenomenon near the critical hypersurface $\Sigma^0$: two-bump blow-up occurs from this side, whereas the solution set remains compact on the opposite side.
\end{abstract}

\section{Introduction}

Let $n\geq2$ and $(\SSphere^{n},g_{0})$ be the standard $n$-sphere. In the years 1969--1970, L. Nirenberg raised the following question: Which function $K$ on $\SSphere^{2}$ can arise as the Gauss curvature of a metric $g$ on $\SSphere^{2}$ conformally equivalent to $g_{0}$? Writing $g=e^{2v}g_{0}$, the problem is equivalent to finding a function $v$ on $\SSphere^{2}$ satisfying
\begin{equation}\label{2d}
-\Delta_{g_{0}}v+1=Ke^{2v}~~\mbox{on }\SSphere^{2},
\end{equation}
where $\Delta_{g_{0}}$ denotes the Laplace--Beltrami operator associated with the metric $g_{0}$.

It is natural to
 extend Nirenberg's question to higher-dimensional spheres $\SSphere^{n}$, $n\geq3$. Writing $g=u^{4/(n-2)}g_{0}$, the  problem is equivalent to finding a positive function $u$ on $\SSphere^{n}$
 satisfying
\begin{equation}\label{main equ}
-L_{g_{0}}u:=-\Delta_{g_{0}}u+c(n)R_{0}u=c(n)Ku^{\frac{n+2}{n-2}}~~\mbox{on }\SSphere^{n},
\end{equation}
where $c(n)=\frac{n-2}{4(n-1)}$ and $R_{0}=n(n-1)$ is the scalar curvature of $g_{0}$.

The first work on the Nirenberg problem was due to
D. Koutroufiotis \cite{K},
who established the solvability of (\ref{2d}) when $K$
is an antipodally symmetric function sufficiently close to
 $1$. J. Moser \cite{M} subsequently proved solvability for all antipodally symmetric positive functions
  $K$. Without any symmetry assumption on $K$, the groundbreaking works 
of A. Chang and P. Yang \cite{CY1,CY2} and of
A. Bahri and J. M. Coron \cite{BC} provided  sufficient conditions for solvability 
 in dimensions $n=2$ and  $n=3$,  respectively. 
 Compactness of the set of all solutions in dimensions $n=2$, $3$ can be found in  \cite{CGY},  \cite{H}, and  \cite{SZ}. In these dimensions, a sequence of solutions cannot blow up at more than one point.

Compactness and existence of solutions in dimensions $n\geq 4$ were established
by the first named author in \cite{L95,L96}, in the spirit of the work of
Schoen and Zhang \cite{SZ} in dimension $n=3$. A new phenomenon arises in
dimensions
$n\ge 4$: unlike the cases $n=2,3$, a sequence of solutions may
blow up at more than one point. Such multiple-point blow-up was first
established in \cite{L96} for suitable $K$. Later, for suitable $K$, unbounded-energy blow-up in dimensions
$n\ge 7$ was proved 
 in \cite{CL2}.

In this paper, we work with axisymmetric functions $K$ having the south and north poles, $P_1$ and $P_2$, as critical points of
flatness order $n-2$. 
We 
construct two-bump axisymmetric solutions $u$ of (\ref{main equ}), with their mass concentrated near $P_1$ and $P_2$.

We view
$(\SSphere^{n}, g_{0})=\{(x^{1})^{2}+\cdots+(x^{n+1})^{2}=1\}$ as the unit sphere embedded in $\RR^{n+1}$.  Then    both  $K$ and $u$ depend only on $\theta :=\arccos x^{n+1}\in[0,\pi]$. For any integer $m\geq0$, let $C^{m}_{r}(\SSphere^{n})$ denote the space of $C^{m}$ axisymmetric functions on $\SSphere^{n}$. Given a positive axisymmetric function $K$ on $\SSphere^{n}$, define 
\begin{equation*}
\Lambda(K):=\sup\{\|u\|_{L^{\infty}(\SSphere^{n})}:\mbox{$u$ is an axisymmetric $C^{2}$ positive solution of (\ref{main equ})}\}.
\end{equation*}
If equation (\ref{main equ}) admits no
axisymmetric   $C^{2}$ positive solution 
for this $K$,
we set $\Lambda(K)=0$. 

In \cite{L96}, it was shown that the solution set is compact when the parameter points determined by the first two leading expansions of $K$ at the poles stay away from a certain codimension-one hypersurface $\Sigma^0$, while loss of compactness can occur as a sequence of  parameter points approaches $\Sigma^0$. We study the behavior near $\Sigma^0$ more precisely and show that the next-order terms in the expansion of $K$ at the poles determine one side of $\Sigma^0$ on which two-bump blow-up occurs, while the solution set remains compact on the opposite side.

Before stating our main result, let us recall 
the two-point blow-up phenomenon in dimensions
 $n\geq 4$, as established  in \cite{L96}. We write
\begin{equation*}
\nu=(K_{1},K_{2},-a_{1},-a_{2})\in\RR^{4}_{+}.
\end{equation*}
Define
\begin{align*}
\Sigma^{0}&:=\{\nu\in\RR^{4}_{+}:(K_{1}K_{2})/(a_{1}a_{2})=\bar{c}(n)\},\\
\Sigma^{+}&:=\{\nu\in\RR^{4}_{+}:(K_{1}K_{2})/(a_{1}a_{2})>\bar{c}(n)\},\\
\Sigma^{-}&:=\{\nu\in\RR^{4}_{+}:(K_{1}K_{2})/(a_{1}a_{2})<\bar{c}(n)\},
\end{align*}
where $\bar{c}(n)= \frac{2^{2(n-3)}(n-2)^2}{(n-1)^2}$.

Fix a non-negative non-decreasing continuous function $\omega: [0,\pi] \rightarrow [0,\infty)$ with $\omega(0) = 0$. For $\nu=(K_{1},K_{2},-a_{1},-a_{2})\in \RR^4_+$, let $\hat{\Sigma}_\nu$ denote the set of positive functions $K\in C_r^1(\SSphere^n)$ satisfying 
\begin{align*}
K(\theta)&=K_{1}+a_{1}
(\pi-\theta)^{n-2}+R^{(1)}(\theta)\\
&=K_{2}+a_{2}\theta^{n-2}+R^{(2)}(\theta),
\end{align*}
where 
\begin{equation}
    \label{higher0}
\frac{|R^{(1)}(\theta)|+|\pi-\theta||R^{(1)}(\theta)'|}{|\pi-\theta|^{n-2}} \leq \omega(\pi - \theta) \quad
 \text{ and } \quad \frac{|R^{(2)}(\theta)|+|\theta||R^{(2)}(\theta)'|}{|\theta|^{n-2}} \leq \omega(\theta).
\end{equation}
Define
$$
\hat \Sigma^0:= \bigcup\limits_{ \nu\in \Sigma^0} \hat \Sigma_\nu, \qquad
\hat \Sigma^+:= \bigcup\limits_{ \nu\in \Sigma^+} \hat \Sigma_\nu, \qquad
\hat \Sigma^-:= \bigcup\limits_{ \nu\in \Sigma^-} \hat \Sigma_\nu.
$$

It was proved in \cite[Theorem 0.20]{L96} that, for each  $K\in\hat\Sigma^{+}$, there exists a positive constant $C_{1}$ such that  any 
positive solution  $u\in C_{r}^{2}(\SSphere^n)$
of (\ref{main equ}) satisfies
\begin{equation*}
1/C_{1}\leq u\leq C_{1},~~\|u\|_{C^{2,\alpha}(\SSphere^n)}\leq C_{1}.
\end{equation*}
Moreover, for any $C>C_{1}$, the degree 
\begin{align*}
\mbox{Index}(K)&:=\mbox{deg}\Big(u+L_{g_{0}}^{-1}(c(n)Ku^{(n+2)/(n-2)}),\\
&\quad\quad\quad\quad\big\{u\in C_{r}^{2}(\SSphere^n),1/C<u<C,~~\|u\|_{C^{2,\alpha}(\SSphere^n)}<C\big\},0\Big)   
\end{align*}
is well defined and is equal to $-1$. Similarly, if $K\in\hat \Sigma^{-}$, $\mbox{Index}(K)$ is well defined and is equal to $0$. (This result was generalized by the authors in \cite{LNW} to the
$\sigma_{k}$-Nirenberg problem for $2\leq k<n/2$.)

Let
\[
\gamma:\tau\in[-1,1]\longmapsto
\bigl(K_{1}(\tau),K_{2}(\tau),-a_{1}(\tau),-a_{2}(\tau)\bigr)\in\RR^{4}_{+}
\]
be a continuous map such that
\[
\gamma(0)\in\Sigma^{0},\qquad
\gamma(\tau)\in\Sigma^{+}\ \text{for }0<\tau\leq1,\qquad
\gamma(\tau)\in\Sigma^{-}\ \text{for }-1\leq \tau<0.
\]
Let $\{K_\tau\}_{\tau\in[-1,1]}$ be a family of positive functions with $K_\tau \in \hat\Sigma_{\gamma(\tau)}$. Then
\begin{equation*}
    \mbox{Index}(K_\tau)=
    \begin{cases}
        -1& \text{ if } 0<\tau<1,\\
        0& \text{ if }-1<\tau<0.
    \end{cases}
\end{equation*}
Consequently, by the homotopy invariance of the Leray-Schauder degree, for every $0<\varepsilon<1$ one must have 
\begin{equation}\label{0000}
\sup\limits_{0<|\tau|<\varepsilon}\Lambda(K_{\tau})=\infty.
\end{equation}
See \cite[Corollary 0.24]{L96}.

In the present paper, we strengthen and generalize \eqref{0000}. In particular, our result gives a sharper description of the blow-up behavior of $\Lambda(K_\tau)$ near $\tau=0$. While \eqref{0000} only yields a sequence $\{\tau_i\}$, with $\tau_i\to0$, such that $\Lambda(K_{\tau_i})\to\infty$, our result determines the one-sided behavior such as 
\[
\lim_{\tau\to0^{+}}\Lambda(K_\tau)=\infty \quad \text{ and } \quad \limsup_{\tau\to0^{-}}\Lambda(K_\tau)<\infty, 
\]
or
\[
\limsup_{\tau\to0^{+}}\Lambda(K_\tau) < \infty \quad \text{ and } \quad \lim_{\tau\to0^{-}}\Lambda(K_\tau) = \infty.
\]
Thus the blow-up is localized to one side of $\Sigma^{0}$, rather than merely detected along an unspecified sequence.

To determine the side of $\Sigma^0$ on which blow-up actually occurs, one makes use of the next non-vanishing terms in the expansions of $K$ at the two poles. More precisely, we assume $K$ has the form
\begin{align*}
K(\theta)
    &=
K_{1}+a_{1}(\pi-\theta)^{n-2}+b_{1}(\pi-\theta)^{\mu_{1}}+ \ldots\nonumber\\
&=K_{2}+a_{2}\theta^{n-2}+b_{2}\theta^{\mu_{2}}+ \ldots,
\end{align*}
where \begin{equation*}
\mu=(\mu_{1},\mu_{2})\in(n-2,n)^    {2},~~b=(b_{1},b_{2})\in\RR^{2},~~b_{1}b_{2}\neq0,
\end{equation*}
and the $\ldots$ denotes error terms. We will see that, as $\nu$ approaches a fixed $\bar \nu \in \Sigma^0$, then
\begin{itemize}
    \item when $\mu_1 \neq \mu_2$, the smaller exponent dominates compactness properties;
    \item when $\mu_1 = \mu_2$, the compactness of the solution set depends on the sign of the quantity
\begin{equation*}
    F_{\mu,b}(\nu):=b_{1}(-a_{2})^{\frac{\mu_{1}}{n-2}}K_{2}^{-\frac{n(\mu_{1}-(n-2))}{2(n-2)}}+b_{2}(-a_{1})^{\frac{\mu_{1}}{n-2}}K_{1}^{-\frac{n(\mu_{1}-(n-2))}{2(n-2)}}.
\end{equation*}
\end{itemize}
In particular, the first non-vanishing next-order terms of $K$ at the poles determine the side of $\Sigma^0$ on which blow-up occurs. 

In view of the above and analogous to $\Sigma^0$, $\Sigma^\pm$, we define in case $\mu_1 = \mu_2$ the sets
\begin{align*}
S^{0}_{\mu, b}
    &:=\{\nu\in\RR^{4}_{+}:
F_{\mu, b}(\nu)=0\},\\
S^{+}_{\mu, b} &:=\{\nu\in\RR^{4}_{+}: F_{\mu, b}(\nu)>0\},\\
S^{-}_{\mu, b}
    &:=\{\nu\in\RR^{4}_{+}:F_{\mu,b}(\nu)<0\}.
\end{align*}
Since $\nabla F_{\mu,b}(\nu)\neq0$ whenever $\nu\in\RR^{4}_{+}$ and $F_{\mu,b}(\nu)=0$, the implicit function theorem implies that $S^{0}_{\mu, b}$ is a smooth hypersurface in $\RR^{4}_{+}$. 

We also introduce, for $\mu,b,\nu$ as above, the set $\hat\Sigma_{\mu,b,\nu}$ of positive functions $K \in C^1_r(\mathbb{S}^n)$ such that
\begin{align*}
K(\theta)
    &=
K_{1}+a_{1}(\pi-\theta)^{n-2}+b_{1}(\pi-\theta)^{\mu_{1}}+R^{(1)}(\theta)\nonumber\\
&=K_{2}+a_{2}\theta^{n-2}+b_{2}\theta^{\mu_{2}}+R^{(2)}(\theta)
\end{align*}
where the error terms $R^{(i)}$ are assumed to satisfy
\begin{equation}\label{higher0X}
\frac{|R^{(1)}(\theta)|+|\pi-\theta||R^{(1)}(\theta)'|}{|\pi-\theta|^{\mu_{1}}} \leq \omega(\pi - \theta)
\quad\text{ and }\quad \frac{|R^{(2)}(\theta)|+|\theta||R^{(2)}(\theta)'|}{|\theta|^{\mu_{2}}} \leq \omega(\theta).
\end{equation}

The following table gives a summary of the notations introduced above.

\medskip
\begin{center}
\begin{tabular}{l|l}
    Object & Meaning\\
    \hline
    $\nu$ & The first two leading coefficients of $K$ at the poles\\
    $\Sigma^0$ & Critical hypersurface in parameter space\\
    $\Sigma^\pm$ & Two sides of $\Sigma^0$\\
    \hline
    $\mu$ & The next-order flatness exponent of $K$ at the poles\\
    $b$ & The next-order coefficients of $K$ at the poles\\
    $\hat \Sigma_{\mu,b,\nu}$ & Set of $K$'s whose three leading terms at the poles are given by $\nu$, $b$ and $\mu$\\
    $F_{\mu,b}$ & A function whose sign determines compactness when $\mu_1 = \mu_2$\\
    $S_{\mu,b}^0$ & Zero set of $F_{\mu,b}$ when $\mu_1 = \mu_2$\\
    $S_{\mu,b}^\pm$ & Two sides of $S_{\mu,b}^0$ when $\mu_1 = \mu_2$\\
\end{tabular}
\label{table1}
\end{center}

We are now ready to state our main result.

\begin{thm}\label{main thm}

Let $n\geq4$ and $\bar{\nu}\in\Sigma^{0}$. 
\begin{enumerate}[(a)]
\item In case
\begin{align*}
&(i)~\mu_{1}<\mu_{2},~~b_{1}>0, \text{ or }\\
&(ii)~\mu_{1}>\mu_{2},~~b_{2}>0, \text{ or }\\
&(iii)~\mu_{1}=\mu_{2},~~\bar{\nu}\in S^{+}_{\mu, b},
\end{align*}
we have
\begin{align*}
\lim\limits_{\varepsilon\rightarrow0^{+}}\inf\limits_{\nu\in B_{\varepsilon}(\bar{\nu})\setminus\{\bar{\nu}\}\cap\Sigma^{-}, K \in \hat \Sigma_{\mu,b,\nu}}\Lambda(K)&=\infty,\\
\limsup\limits_{\varepsilon\rightarrow0^{+}}\sup\limits_{\nu\in B_{\varepsilon}(\bar{\nu})\setminus\{\bar{\nu}\}\cap \Sigma^{+}, K \in \hat \Sigma_{\mu,b,\nu}}\Lambda(K)&<\infty.
\end{align*}

\item In the opposite case that
\begin{align*}
&(i)~\mu_{1}<\mu_{2},~~b_{1}<0, \text{ or }\\
&(ii)~\mu_{1}>\mu_{2},~~b_{2}<0,\text{ or }\\
&(iii)~\mu_{1}=\mu_{2},~~\bar{\nu}\in S^{-}_{\mu, b},
\end{align*}
we have
\begin{align*}
\lim\limits_{\varepsilon\rightarrow0^{+}}\inf\limits_{\nu\in B_{\varepsilon}(\bar{\nu})\setminus\{\bar{\nu}\}\cap\Sigma^{+}, K \in \hat \Sigma_{\mu,b,\nu}}\Lambda(K)&=\infty,\\
\limsup\limits_{\varepsilon\rightarrow0^{+}}\sup\limits_{\nu\in B_{\varepsilon}(\bar{\nu})\setminus\{\bar{\nu}\}\cap \Sigma^{-}, K \in \hat \Sigma_{\mu,b,\nu}}\Lambda(K)&<\infty.
\end{align*}
\end{enumerate}
\end{thm}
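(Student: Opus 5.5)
The plan is to run a Lyapunov--Schmidt reduction onto two-bump approximate solutions, and to combine it with a blow-up analysis sharp enough to see the next-order terms $b_i\theta^{\mu_i}$. The two halves of the theorem call for different tools. The ``$\inf \Lambda = \infty$'' statements need an actual construction of two-bump solutions for every $\nu$ close to $\bar\nu$ on the relevant side. The ``$\limsup<\infty$'' statements need a contradiction argument based on the blow-up analysis of \cite{L96}, refined to next order.

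\emph{Setup.} Stereographically project from each pole. Near $P_1$ and $P_2$ take standard bubbles $\delta_{\lambda_i}$, axisymmetric and centered exactly at the poles. Symmetry removes the translation parameters, so the only free parameters are the concentration scales $\lambda_1,\lambda_2\to\infty$. Set $W=\xi_1\delta_{\lambda_1}+\xi_2\delta_{\lambda_2}$, with the weights $\xi_i\approx K_i^{-(n-2)/4}$ normalizing the bubbles. Solve the equation modulo the two-dimensional kernel $\mathrm{span}\{\partial_{\lambda_i}\delta_{\lambda_i}\}$ by a contraction in weighted norms (axisymmetric functions only). This yields a reduced energy $\Phi(\lambda_1,\lambda_2)$ whose critical points give genuine solutions.

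\emph{Expansion of $\Phi$.} The key step is the expansion
\[
\Phi=c_0\sum_i K_i^{-\frac{n-2}{2}}+\sum_i \frac{A_i a_i}{K_i^{n/2}}\lambda_i^{-(n-2)}\log$-free terms$-B\frac{1}{(K_1K_2)^{\frac{n-2}{4}}}(\lambda_1\lambda_2)^{-\frac{n-2}{2}}+\sum_i C_i b_i K_i^{-\cdots}\lambda_i^{-\mu_i}+o(\cdots).
\]
Here the flatness order $n-2$ is exactly what makes the $a_i$ term and the interaction term both of order $\lambda^{-(n-2)}$. With $t_i=\lambda_i^{-(n-2)/2}$, the leading part is the quadratic form $\alpha_1 t_1^2+\alpha_2 t_2^2-\beta t_1t_2$, with $\alpha_i\propto -a_i>0$. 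This form is degenerate precisely when $4\alpha_1\alpha_2=\beta^2$, and checking the constants should reproduce $K_1K_2/(a_1a_2)=\bar c(n)$, i.e.\ $\nu\in\Sigma^0$. The $b_i$ terms are homogeneous of degree $2\mu_i/(n-2)\in(2,2n/(n-2))$ in $t$. They are therefore higher order, and they decide matters only along the degenerate direction of the form.

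\emph{Finding critical points.} Write $\nu$ at signed distance $\sigma$ from $\Sigma^0$, so that the smallest eigenvalue of the form is $\approx c\sigma$. Pass to polar-type coordinates $t=r\,e(\phi)$ near the null direction $e_0$. The equation in $\phi$ is nondegenerate. The radial equation reads approximately $c\sigma r^2+ D\, r^{2\mu_{\min}/(n-2)}=0$ after differentiation in $r$, where $D$ is the $b$-coefficient evaluated on $e_0$. In case $\mu_1<\mu_2$ we get $D\propto b_1$. When $\mu_1=\mu_2$, $D$ is proportional to $F_{\mu,b}(\bar\nu)$: evaluating on the null direction, whose components have ratio given by $a_i,K_i$, should produce exactly the powers $(-a_j)^{\mu/(n-2)}K_j^{-n(\mu-(n-2))/(2(n-2))}$. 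A small positive root $r\sim|\sigma/D|^{(n-2)/(2\mu-2(n-2))}$ exists iff $\sigma D<0$, and it tends to $0$, i.e.\ $\lambda\to\infty$, as $\sigma\to0$. It is a nondegenerate critical point, or at least one with nonzero local degree, so it persists under the $o(\cdot)$ errors controlled by $\omega$; this persistence must hold uniformly over $K\in\hat\Sigma_{\mu,b,\nu}$. Matching signs gives case (a): $b_1>0$ forces $\nu\in\Sigma^-$. This yields solutions with $\|u\|_\infty\gtrsim\lambda^{(n-2)/2}\to\infty$, uniformly as $\nu\to\bar\nu$.

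\emph{Compactness on the other side.} Argue by contradiction. Take a sequence $K_j$ with $\nu_j\to\bar\nu$ from the wrong side and solutions $u_j$ that blow up. By \cite{L96}, blow-up can only happen at the poles, the blow-up points are isolated simple, and both poles must blow up: one-point blow-up is excluded by the Pohozaev identity since $a_i<0$. Apply the local Pohozaev identity at each pole with the refined profile estimates $u_j=\xi_i\delta_{\lambda_i}+O(\cdot)$. This gives exactly the two equations $\partial_{\lambda_i}\Phi=0$ up to $o$ errors, and the algebra above shows they have no solution when $\sigma D>0$.

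The main obstacle is the error control. The remainders must be $o(\lambda^{-\mu_{\min}})$ and not merely $o(\lambda^{-(n-2)})$, and the interaction error must be beaten at next order. This requires sharp estimates of the correction $\phi$ in the reduction, and correspondingly sharp second-order blow-up estimates in the Pohozaev step, in dimensions $n\ge4$; the case $n=4$ is borderline for the interaction terms. I would try first to prove bounds of the form $\|\phi\|\lesssim\lambda^{-\mu_{\min}/2}+\lambda^{-(n-2)}\omega(\lambda^{-1})$ together with an exact computation of the interaction coefficient.
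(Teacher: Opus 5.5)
The construction half of your proposal is the paper's argument in other coordinates. The compactness half takes a genuinely different and much heavier route.

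\emph{Construction half.} The paper's variables $s=(t_2/t_1)^{(n-2)/2}$ and $\beta=\delta^{-1}t_1^{-(\mu_1-(n-2))}$, with $\delta=A_1A_2-B^2$, correspond to your angle and to $r^{p-2}/\sigma$ up to constants, where $p=2\mu_1/(n-2)$. At $\bar\nu$ the rescaled reduced system has a nondegenerate zero $(\bar\beta,\bar s)$. The sign of $\bar\beta$ is that of $b_1$, resp.\ of $F_{\mu,b}(\bar\nu)$, which is your computation of $D$ on the null direction. Brouwer's theorem with $C^0$ bounds on the gradient remainders plays the role of your local-degree argument.

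Your sign matching is correct. From \eqref{defA}--\eqref{defB}, $A_1A_2/B^2=\bar c(n)\,a_1a_2/(K_1K_2)$, so the side where your form is positive definite ($\delta>0$) is $\Sigma^-$. The paper's text says $\delta>0$ on $\Sigma^+$ and $\bar\beta<0$; both appear reversed, and the two slips cancel in the final statement.

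\emph{Compactness half.} You treat compactness as needing a separate tool. You propose local Pohozaev identities at the poles, which require blow-up profile estimates accurate to relative order $o(\lambda^{-(\mu-(n-2))})$. That is a second-order refinement of the isolated-simple analysis, and you leave it open.

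The paper uses only the qualitative output of the blow-up analysis: a blowing-up sequence with $\nu_m\to\bar\nu$ concentrates at both poles and is $H^1$-close to $\sum_i\bar\alpha_i\delta_{P_i,t_i}$ with $t_i\to\infty$. Lemma~\ref{xiayongquan} and the uniqueness in Lemma~\ref{ift} then show that $u_m=\sum_i\alpha_i(t)\delta_{P_i,t_i}+v(t)$ is exactly what Proposition~\ref{infinite} produces. So $u_m$ gives a zero of the same reduced system, with the remainder bounds already proved for the construction. Such zeros must converge to $(\bar\beta,\bar s)$, whose sign is incompatible with the sign of $\delta$ on the wrong side. Compactness becomes essentially a corollary of the reduction.

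Your route would stay within blow-up theory and could give finer pointwise information about blowing-up solutions. For this theorem it costs an extra layer of estimates the paper avoids.

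\emph{Error budget.} One point in your last paragraph needs fixing. The remainders in the reduced equations are essentially quadratic in the correction. A bound $\|\phi\|\lesssim\lambda^{-\mu_{\min}/2}$ would therefore only make them of the same order as the $b_i$-terms; you need $\|\phi\|=o(\lambda^{-\mu/2})$.

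This comes with no $\mu$-dependent work. The residual of $W$ is dominated by the $a_i$-terms and the interaction. For $\lambda_1\sim\lambda_2$, Proposition~\ref{infinite} gives $\|\phi\|\lesssim\lambda^{-(n-2)}$ for $n=4,5$, and $\lambda^{-(n+2)/2}$ up to logarithms for $n\ge6$. The squares are $o(\lambda^{-\mu})$ because $\mu<n$ and $n\le 2(n-2)$ for $n\ge4$; this is where $n\ge4$ enters.

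Finally, you should check that $W+\phi>0$. As in the paper, this follows from $0\le u^-\le|\phi|$, the Sobolev inequality and the maximum principle.
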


We briefly comment on our strategy of proof. When $K\equiv R_{0}$, all the solutions of (\ref{main equ}) have been classified (see \cite{O}, \cite{GNN} and \cite{CGS}) and are given by:
\begin{equation}\label{deltaPtexpress}
\delta_{P,t}(x)=\left(\frac{t}{1+\frac{t^{2}-1}{2}(1-P\cdot x)}\right)^{\frac{n-2}{2}},~~~~\forall~x\in\SSphere^{n},
\end{equation}
where $t>0$ and $P\in\SSphere^{n}$. We look for a solution of \eqref{main equ} of the form
\[
u = \sum_{i= 1}^2 \alpha_i \delta_{P_i,t_i} + v
\]
where $v$ is a correction term satisfying the usual orthogonality properties. We first perform a Lyapunov-Schmidt reduction to obtain for fixed $t = (t_1,t_2)$ the pair $(\alpha(t),v(t))$ such that the projection of \eqref{main equ} along the directions of $\alpha$ and $v$ are satisfied -- See Proposition \ref{infinite}. Equation \eqref{main equ} then reduces to a system $H(t) = 0$ of two equations for the two unknowns $t = (t_1,t_2)$. Near $\Sigma^0$, the leading-order part of this system becomes singular, and the next-order data $\mu$ and $b$ determine the solvability of the system. More concretely, but still schematically, this system can be put in the form
\[
A_\nu \begin{pmatrix}t_1^{\frac{n-2}{2}}\\t_2^{\frac{n-2}{2}}\end{pmatrix}
    = \begin{pmatrix}o(1)t_1^{\frac{n-2}{2}}\\o(1)t_2^{\frac{n-2}{2}}\end{pmatrix}
\]
where $A_\nu$ is some constant matrix and the little o notation is meant for large $t_1, t_2$. The matrix $A_\nu$ has the property that it is non-singular if and only if $\nu \notin \Sigma^0$. Thus, as $\nu \rightarrow \bar \nu$, the invertibility of $A_\nu$ degenerates. We proceed by introducing rescaled variables, $\beta$ to capture the common scale of the bubbles and $s$ to capture the relative strength of one bubble with respect to the other, and analyze the new system using the structure of the right hand side given by the next-order term in the expansion of $K$ near the poles -- See Subsection \ref{SSec:Sol}.

\section{Proof}

Fix $\bar \nu \in \Sigma^0$ and consider a function $K \in \hat\Sigma_{\mu,b,\nu}$ with $\nu$ close to $\bar \nu$. Recall that this means that $K$ is positive, belongs to $C^1_r(\mathbb{S}^n)$ and
\begin{align*}
K(\theta)&=K_{1}+a_{1}(\pi-\theta)^{n-2}+b_{1}(\pi-\theta)^{\mu_{1}}+R^{(1)}(\theta)\nonumber\\
&=K_{2}+a_{2}\theta^{n-2}+b_{2}\theta^{\mu_{2}}+R^{(2)}(\theta),  
\end{align*}
where $K_{1}$, $K_{2}>0$, $-a_{1}$, $-a_{2}>0$, $\mu_{1}$, $\mu_{2}\in(n-2,n)$, $b_{1}$, $b_{2}\in\RR$, $b_{1}b_{2}\neq0$, and \eqref{higher0X} holds.

\subsection{A finite-dimensional reduction}

Let $H^{1}_{r}(\SSphere^{n})$ denote the space of axisymmetric functions on $\SSphere^{n}$ with the $H^{1}$ inner product and norm by
\begin{equation*}
\langle u,v\rangle=-\int_{\SSphere^{n}}(L_{g_{0}}u)v,~~~~\|u\|=\sqrt{\langle u,u\rangle}.
\end{equation*}

For $i=1$, $2$, let $\bar{\alpha}_{i}=(R_{0}/K_i)^{(n-2)/4}$. For any $\varepsilon>0$, define
\begin{equation*}
\Omega_{\varepsilon}:=\Omega_{\varepsilon, K_1, K_2}:=\{(t,\alpha)\in\RR^{2}_{+}\times\RR^{2}_{+}:t_{i}>1/\varepsilon,~~|\alpha_{i}-\bar{\alpha}_{i}|<\varepsilon,~~i=1,2\},
\end{equation*}
and
\begin{equation*}
{\cal N}^{2,\varepsilon}_{r}
:={\cal N}^{2,\varepsilon, K_1, K_2}_{r}:=\Big\{u\in H^{1}_{r}(\SSphere^{n}):\exists~ (t,\alpha)\in\Omega_{\varepsilon} \mbox{ such that }\Big\|u-\sum\limits_{i=1}^{2}\alpha_{i}\delta_{P_{i},t_{i}}\Big\|<\varepsilon\Big\}.
\end{equation*}

For any $\varepsilon>0$ and any $u\in {\cal N}^{2,\varepsilon}_{r}$, consider the minimization problem
\begin{equation}
\min\limits_{(t,\alpha)\in\Omega_{\varepsilon}}\Big\|u-\sum\limits_{i=1}^{2}\alpha_{i}\delta_{P_{i},t_{i}}\Big\|.\label{MP}
\end{equation}
Then we have the following lemma, see also \cite[Proposition 2]{BC} and \cite[Proposition 4.1]{L93}.
\begin{lem}\label{xiayongquan}
For any $0<\varepsilon<1$, there exists $\hat{\varepsilon}>0$ depending on $\varepsilon$, the positive lower and upper bound of $K_{1}$ and $K_{2}$ such that, if $\|u-\sum\limits_{i=1}^{2}\alpha_{i}\delta_{P_{i},t_{i}}\|<\hat{\varepsilon}$ for some $(\alpha,t)\in\Omega_{\frac{\varepsilon}{4}}$, the minimization problem (\ref{MP}) has a unique minimum, and it is achieved in $\Omega_{\frac{\varepsilon}{2}}$. Moreover,
\begin{equation}\label{rep}
v:=u-\sum\limits_{i=1}^{2}\alpha_{i}\delta_{P_{i},t_{i}}\in E_{t},
\end{equation}
where
\begin{equation*}
E_{t}:=\Big\{v\in H_{r}^{1}(\SSphere^{n}):\langle v,\delta_{P_{i},t_{i}}\rangle=0,~~\Big\langle v,\frac{\partial \delta_{P_{i},t_{i}}}{\partial t_{i}}\Big\rangle=0,~~i=1,2\Big\}.
\end{equation*}
\end{lem}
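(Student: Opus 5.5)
The plan is the standard Bahri--Coron argument: show that the distance functional is a small, nondegenerate perturbation of a problem whose unique minimizer and orthogonality relations we know exactly. Write $u=\sum_i\alpha_i^0\delta_{P_i,t_i^0}+w$ with $(\alpha^0,t^0)\in\Omega_{\varepsilon/4}$ and $\|w\|<\hat\varepsilon$. Set
\[
f(t,\alpha):=\Big\|u-\sum_{i=1}^2\alpha_i\delta_{P_i,t_i}\Big\|^2 ,
\]
and study it on $\Omega_\varepsilon$. Since $P_1$ and $P_2$ are antipodal, the interaction $\langle\delta_{P_1,t_1},\delta_{P_2,t_2}\rangle$ is $O((t_1t_2)^{-(n-2)/2})$. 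It is therefore small uniformly on $\Omega_\varepsilon$ once $\varepsilon$ is small, and the general case follows by shrinking. Also $\|\delta_{P,t}\|^2$ is a constant independent of $t$.

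\textbf{Step 1: a minimizer exists and lies in $\Omega_{\varepsilon/2}$.} First I expand
\[
f(t,\alpha)=\Big\|\sum_i\big(\alpha_i^0\delta_{P_i,t_i^0}-\alpha_i\delta_{P_i,t_i}\big)\Big\|^2+O(\hat\varepsilon).
\]
Next I use the one-bubble estimate: $\|a\delta_{P,s}-b\delta_{P,t}\|$ is small only if $|a-b|$ is small and $|\log(t/s)|$ is small. This holds because $\langle\delta_{P,s},\delta_{P,t}\rangle=c\,\phi(t/s)$ with $\phi(1)=1$ and $\phi(\lambda)<1$ strictly for $\lambda\ne1$, with $\phi\to0$ as $\lambda\to0$ or $\lambda\to\infty$. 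Combining this with the smallness of the cross terms, any $(t,\alpha)$ with $f$ below $C\hat\varepsilon^2$ must have $|\alpha_i-\alpha_i^0|$ and $|\log(t_i/t_i^0)|$ small. That confines near-minimizing configurations to $\Omega_{\varepsilon/2}$, so the infimum is attained at an interior point: take a minimizing sequence, note that $t_i$ stays in a compact range of $\log t_i$, and use continuity.

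\textbf{Step 2: uniqueness.} I would work in the coordinates $(\log t_i,\alpha_i)$ and compute the Hessian of $f$ at points close to $(t^0,\alpha^0)$. The leading part is the Hessian of $g(t,\alpha)=\|\sum_i(\alpha_i^0\delta_{P_i,t_i^0}-\alpha_i\delta_{P_i,t_i})\|^2$ at its minimum. Because the bubbles are nearly orthogonal, this is approximately block diagonal. Each block is positive definite: the $\alpha$-direction gives $\|\delta\|^2$, and the $\log t$-direction gives $\alpha^2\|t\partial_t\delta_{P,t}\|^2>0$, with $\|t\partial_t\delta_{P,t}\|$ independent of $t$. The perturbation coming from $w$ is bounded in $C^2$ by $C\hat\varepsilon$, using $\|t^k\partial_t^k\delta_{P,t}\|\le C$. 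Hence $f$ is strictly convex on the small neighbourhood to which Step 1 confines every minimizer, and the minimizer is unique.

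\textbf{Step 3: orthogonality.} At the interior minimum, $\partial_{\alpha_i}f=0$ gives $\langle v,\delta_{P_i,t_i}\rangle=0$, and $\partial_{t_i}f=0$ gives $\alpha_i\langle v,\partial_{t_i}\delta_{P_i,t_i}\rangle=0$. Since $\alpha_i$ is close to $\bar\alpha_i>0$, this yields $v\in E_t$. The constants depend only on $\varepsilon$ and on the bounds for $K_i$, since these enter only through $\bar\alpha_i$.

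I expect the main obstacle to be the quantitative part of Step 1. One must rule out a far-away $t$, for example $t_1\gg t_1^0$, giving a comparable value of $f$, and this has to hold uniformly in $t^0\to\infty$. I would handle it with the scale invariance $\langle\delta_{P,s},\delta_{P,t}\rangle=c\,\phi(t/s)$, which reduces everything to a fixed compact computation independent of $t^0$.
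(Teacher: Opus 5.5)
Your route is the standard Bahri--Coron argument: localize near-minimizers, use the nondegenerate, nearly block-diagonal Hessian in the coordinates $(\log t_i,\alpha_i)$, then read off orthogonality from the first-order conditions. This is what the paper invokes by citing \cite[Proposition 2]{BC} and \cite[Proposition 4.1]{L93}; it gives no proof of its own. Your Steps 2 and 3 are fine, but two points need repair.

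First, your remark that ``the general case follows by shrinking'' is not a valid reduction. The sets $\Omega_\varepsilon$ and $\Omega_{\varepsilon/4}$ grow with $\varepsilon$, so a point $(\alpha^0,t^0)\in\Omega_{\varepsilon/4}$ need not lie in $\Omega_{\varepsilon_0/4}$. Moreover, for $t_i\approx 1/\varepsilon$ with $\varepsilon$ not small, the two bubbles are far from orthogonal. In fact the statement cannot hold for every $\varepsilon\in(0,1)$. Suppose $K_1$ is large enough that $\bar\alpha_1<\varepsilon/4$. Then $\Omega_{\varepsilon/4}$ allows $\alpha_1^0>0$ arbitrarily small. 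Take $u=\alpha_2^0\delta_{P_2,t_2^0}$ with $\alpha_1^0\|\delta_{P_1,t_1^0}\|<\hat\varepsilon$. The infimum in (\ref{MP}) is $0$ (let $\alpha_1\to0^+$ with $t_2=t_2^0$, $\alpha_2=\alpha_2^0$), but it is not attained on $\Omega_\varepsilon$, since no combination with $\alpha_1>0$ equals $u$. So you should state and prove the lemma only for $0<\varepsilon<\varepsilon_0$, with $\varepsilon_0$ depending on $n$ and the bounds on $K_1,K_2$. This is what the cited results give and all the reduction uses.

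Second, Step 1 as written does not place the minimizer in $\Omega_{\varepsilon/2}$ when $n=4$. On $\Omega_\varepsilon$ the cross terms are only $O(\varepsilon^{n-2})$. So $\|u-\sum_i\alpha_i\delta_{P_i,t_i}\|<\hat\varepsilon$ only yields $|\alpha_i-\alpha_i^0|+|\log(t_i/t_i^0)|\le C\varepsilon^{(n-2)/2}+C\hat\varepsilon$. For $n=4$ this is $C\varepsilon$, with $C$ (depending on $\bar\alpha_i$) in general larger than $1/4$, so $|\alpha_i-\bar\alpha_i|<\varepsilon/2$ does not follow.

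The cure is already in your Step 2. Let $B$ be the crude box from Step 1, intersected with $\Omega_\varepsilon$; it is still a box in $(\log t,\alpha)$, hence convex. It is small when $\varepsilon$ is small, so $f$ is uniformly convex on it with some modulus $c>0$ independent of $t^0$. Let $z^*$ be the minimizer of $f$ on the closure of this set, and $z^0=(\log t^0,\alpha^0)$. First-order optimality and uniform convexity give $\frac{c}{2}|z^*-z^0|^2\le f(z^0)-f(z^*)\le\|w\|^2<\hat\varepsilon^2$. Hence $z^*$ lies within $C\hat\varepsilon$ of $z^0$, and choosing $\hat\varepsilon\ll\varepsilon$ puts it in the interior of $\Omega_{\varepsilon/2}$. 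Existence, uniqueness and the orthogonality of Step 3 then follow. Alternatively, note that the cross term is a mixed second difference bounded by $C\varepsilon^{n-2}d_1d_2$, where $d_i$ are the one-bubble distances, and rerun Step 1 with this bound.
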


Consider the functional
\begin{equation*}
I[u]:=\frac{1}{2}\langle u,u\rangle-c(n)\frac{n-2}{2n}\int_{\SSphere^{n}}K|u|^{\frac{2n}{n-2}},~~\forall~u\in H^{1}_{r}(\SSphere^{n}).
\end{equation*}
It is easy to see that $I$ is $C^{2}$ and the critical points of $I$ in $H^{1}_{r}(\SSphere^{n})$ are solutions of
\begin{equation}\label{liuxiyij}
I'[u]=-L_{g_{0}}u-c(n)K|u|^{\frac{4}{n-2}}u=0.
\end{equation}
It follows from Lemma \ref{xiayongquan} that the triplet $(t,\alpha,v)$ is a good parameterization of ${\cal N}^{2,\varepsilon}_r$ in a neighborhood of $\Big\{\sum\limits_{i=1}^{2}\alpha_{i}\delta_{P_{i},t_{i}}:(t,\alpha,v)\in\Omega_{\varepsilon/2}\Big\}$. Moreover,
\begin{equation*}
\mbox{$I'[u]=0$~~~~if and only if~~~~$\nabla J(t,\alpha,v)=0$},
\end{equation*}
where $J(t,\alpha,v):=I[u]$.

For $t=(t_{1},t_{2})\in\RR_{+}^{2}$ fixed, we first solve the preliminary equations for $\alpha$ and $v$ (depending on $t$):
\begin{align}
F_{1}(\alpha,v)&:=\frac{\partial J}{\partial \alpha_{1}}(t,\alpha,v)=0,\label{F1def}\\
F_{2}(\alpha,v)&:=\frac{\partial J}{\partial \alpha_{2}}(t,\alpha,v)=0,\label{F2def}\\
F_{3}(\alpha,v)&:=\frac{\partial J}{\partial v}(t,\alpha,v)=0.\label{F3def}
\end{align}
In view of $v\in E_{t}$, we have that
 \begin{equation*}
J(t,\alpha,v)=\frac{1}{2}c_{0}|\alpha|^{2}+\alpha_{1}\alpha_{2}\langle\delta_{P_{1},t_{1}},\delta_{P_{2},t_{2}}\rangle+\frac{1}{2}\|v\|^{2}-c(n)\frac{n-2}{2n}\int_{\SSphere^{n}}K|u|^{\frac{2n}{n-2}},
\end{equation*}
where $c_{0}=\langle\delta_{P_{1},t_{1}},\delta_{P_{1},t_{1}}\rangle=\langle\delta_{P_{2},t_{2}},\delta_{P_{2},t_{2}}\rangle=2^{n}c(n)R_{0}|\SSphere^{n-1}|\int_{0}^{\infty}\frac{r^{n-1}dr}{(1+r^{2})^{n}}$ (see (\ref{AB1-}) in Lemma \ref{A1}).

Then the expressions of $F_{1}$, $F_{2}$ and $F_{3}$ are as follows.
\begin{align*}
F_{1}(\alpha,v)&=c_{0}\alpha_{1}+\alpha_{2}\langle\delta_{P_{1},t_{1}},\delta_{P_{2},t_{2}}\rangle-c(n)\int_{\SSphere^{n}}K|u|^{\frac{4}{n-2}}u\delta_{P_{1},t_{1}},\\
F_{2}(\alpha,v)&=c_{0}\alpha_{2}+\alpha_{1}\langle\delta_{P_{1},t_{1}},\delta_{P_{2},t_{2}}\rangle-c(n)\int_{\SSphere^{n}}K|u|^{\frac{4}{n-2}}u\delta_{P_{2},t_{2}},
\end{align*}
while for $\varphi\in E_{t}$,
\begin{equation*}
F_{3}(\alpha,v)[\varphi]=\frac{d}{ds}|_{s=0}J[t,\alpha,v+s\varphi]=\langle v,\varphi\rangle-c(n)\int_{\SSphere^{n}}K|u|^{\frac{4}{n-2}}u\varphi.
\end{equation*}
It follows that in the space $E_{t}$,
\begin{equation*}
F_{3}(\alpha,v)=v-\Pi_{t}(-L_{g_{0}})^{-1}(c(n)K|u|^{\frac{4}{n-2}}u),
\end{equation*}
where $\Pi_{t}$ denotes the orthogonal projection map of $H^{1}_{r}(\SSphere^{n})$ onto $E_{t}$.

Setting
\begin{equation}\label{Fdef}
F(\alpha,v):=(F_{1}(\alpha,v),F_{2}(\alpha,v),F_{3}(\alpha,v))
\end{equation}
for fixed $t$, with the aid of the implicit function theorem, we solve
\begin{equation*}
F(\alpha,v)=0.
\end{equation*}
We shall, as in \cite{L97,LN}, make use of the following easily verified form of the implicit function theorem:
\begin{lem}\label{ift}
Let $X$, $Y$ be Banach spaces, $a>0$, $B_{a}(z_{0})=\{z\in X:\|z-z_{0}\|\leq a\}$. Suppose that $F$ is a $C^{1}$ map of $B_{a}(z_{0})$ into $Y$, with $F'(z_{0})$ invertible, and satisfying, for some $0<\theta<1$,
\begin{align}
&\|F'(z_{0})^{-1}F(z_{0})\|\leq (1-\theta)a,\label{liutan}\\
&\|F'(z_{0})\|^{-1}\|F'(z)-F'(z_{0})\|\leq\theta,~~~~\forall~z\in B_{a}.\label{liuyuxi}
\end{align}
Then there is a unique solution in $B_{a}(z_{0})$ of $F(z)=0$.
\end{lem}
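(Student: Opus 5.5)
The statement to prove is Lemma \ref{ift}, a quantitative form of the implicit (really, inverse) function theorem. I will prove it by the contraction mapping principle applied to a Newton-type map with frozen derivative. Set $A:=F'(z_0)$, which is invertible by hypothesis, and define
\[
T(z):=z-A^{-1}F(z),\qquad z\in B_a(z_0).
\]
Then $z$ is a zero of $F$ in $B_a(z_0)$ if and only if it is a fixed point of $T$ there. It therefore suffices to show that $T$ maps the closed ball $B_a(z_0)$, a complete metric space, into itself and is a strict contraction. The Banach fixed point theorem then gives both existence and uniqueness.

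\textbf{Contraction.} For $z,w\in B_a(z_0)$ the segment joining them stays in the ball by convexity. The mean value inequality for the $C^1$ map $T$, whose derivative is $T'(\xi)=A^{-1}(A-F'(\xi))$, gives
\[
\|T(z)-T(w)\|\le \sup_{\xi\in B_a}\|A^{-1}(F'(\xi)-A)\|\,\|z-w\|.
\]
Hypothesis \eqref{liuyuxi} must bound this operator norm by $\theta$. As literally written it reads $\|F'(z_0)\|^{-1}\|F'(z)-F'(z_0)\|\le\theta$, which does not directly control $\|A^{-1}(F'(z)-A)\|$, since $\|A^{-1}\|$ need not equal $\|A\|^{-1}$. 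I will interpret the hypothesis in the standard intended way, namely as $\|F'(z_0)^{-1}\|\,\|F'(z)-F'(z_0)\|\le\theta$. Under that reading, $\|A^{-1}(F'(z)-A)\|\le\|A^{-1}\|\,\|F'(z)-A\|\le\theta$, so $\mathrm{Lip}(T)\le\theta<1$. This interpretation step is the only real obstacle, and I would record it explicitly in the write-up.

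\textbf{Self-map.} For $z\in B_a(z_0)$, the triangle inequality together with the contraction estimate and hypothesis \eqref{liutan} gives
\[
\|T(z)-z_0\|\le\|T(z)-T(z_0)\|+\|T(z_0)-z_0\|\le\theta\|z-z_0\|+\|A^{-1}F(z_0)\|\le\theta a+(1-\theta)a=a.
\]
Hence $T(B_a(z_0))\subset B_a(z_0)$.

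By the contraction mapping principle, $T$ has exactly one fixed point in $B_a(z_0)$, and therefore $F(z)=0$ has exactly one solution there.
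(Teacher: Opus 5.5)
Your contraction argument for $T(z)=z-F'(z_0)^{-1}F(z)$ on the closed ball is correct, and it is the standard argument behind the paper's remark that this form of the implicit function theorem is ``easily verified''; the paper itself gives no proof and only cites earlier work. You are right to read \eqref{liuyuxi} as $\|F'(z_0)^{-1}\|\,\|F'(z)-F'(z_0)\|\le\theta$, because the literal version is false (e.g.\ $F(z_1,z_2)=(z_1,\epsilon z_2-cz_2^3)$ on Euclidean $\mathbb{R}^2$ with $z_0=0$, $3ca^2=\theta$, $\epsilon\le\theta/3$ satisfies it yet has three zeros in $B_a(0)$), and this stronger form is exactly what the paper checks in Step~3 of Proposition~\ref{infinite}, using the bound \eqref{shipengfei} on $\|F'(z_0)^{-1}\|$.
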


After solving (\ref{F1def})-(\ref{F3def}), we will then in the next section solve the equations:
\begin{align}
&\frac{\partial}{\partial t_{1}}[J(t,\alpha(t),v(t))]=0,\label{pianT}\\
&\frac{\partial}{\partial t_{2}}[J(t,\alpha(t),v(t))]=0.\label{pianT'}
\end{align}

After solving (\ref{pianT})-(\ref{pianT'}), we have already obtained $u=\sum\limits_{i=1}^{2}\alpha_{i}(t)\delta_{P_{i},t_{i}}+v(t)$ satisfying (\ref{liuxiyij}). Then a standard argument yields that $u$ is positive, which implies that $u$ is a solution of (\ref{main equ}). Indeed, multiplying (\ref{liuxiyij}) with $u^{-}=\max\{0,-u\}$ and integrating on $\SSphere^{n}$, we have that
\begin{equation}\label{chenqi}
\|u^{-}\|^{2}=c(n)\int_{\SSphere^{n}}K(u^{-})^{\frac{2n}{n-2}}.
\end{equation}
The Sobolev embedding theorem also gives that
\begin{equation}\label{liuqi}
\int_{\SSphere^{n}}K(u^{-})^{\frac{2n}{n-2}}\leq C(n,K)\|u^{-}\|^{\frac{2n}{n-2}}.
\end{equation}
Combining (\ref{chenqi}) and (\ref{liuqi}) together, we obtain that either $u^{-}\equiv0$ or $\|u^{-}\|$ is far from zero. However, since $|u^{-}|\leq |v|$ and $v$ goes to zero in $H^{1}_{r}(\SSphere^{n})$, we obtain that $u^{-}\equiv0$. By the strong maximum principle, we can conclude that $u>0$ on $\SSphere^{n}$.

\subsubsection{A spectrum property of the linearized operator}

The following lemma is from \cite{B}, which establishes a spectrum property of the linearized operator.

\begin{lem}\label{linear}
There exists a positive constant $C=C(n)$ depending only on $n$ such that for any $v\in E_{t}$,
\begin{equation}\label{linear-}
\|v\|^{2}-c(n)\frac{n+2}{n-2}R_{0}\sum\limits_{l=1}^{2}\int_{\SSphere^{n}}\delta_{P_{l},t_{l}}^{\frac{4}{n-2}}v^{2}\geq C(n)\|v\|^{2}.
\end{equation}
\end{lem}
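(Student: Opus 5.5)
We will prove Lemma \ref{linear} by contradiction, combined with a localization argument that reduces to the single-bubble nondegeneracy result. The single-bubble fact we need is this. On $\SSphere^n$, the quadratic form
\[
Q_t[w] := \|w\|^2 - c(n)\tfrac{n+2}{n-2}R_0\int \delta_{P,t}^{4/(n-2)}w^2
\]
is coercive on the orthogonal complement of $\delta_{P,t}$ and of its tangent directions. The tangent directions are $\partial_t\delta_{P,t}$ and the conformal translations $\partial_P\delta_{P,t}$.

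This follows from the classification of the linearized equation. After the conformal change sending $\delta_{P,t}$ to $\delta_{P,1}\equiv 1$, the form becomes
\[
\|w\|^2-\tfrac{n+2}{n-2}\,c(n)R_0\int w^2 .
\]
Its negative eigenspace consists of the constants. Its kernel is spanned by the first spherical harmonics $x^1,\dots,x^{n+1}$, coming from the eigenvalue $n$ of $-\Delta$. The remaining eigenvalues of $-\Delta$ are at least $2(n+1)$, which gives a uniform gap $C_0(n)>0$.

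In the axisymmetric class with $P\in\{P_1,P_2\}$ the situation simplifies. The only surviving kernel direction is $x^{n+1}$, and in the original picture this is a multiple of $\partial_t\delta_{P,t}$. The orthogonality conditions defining $E_t$ therefore exclude both the negative direction and the kernel. Conformal invariance of $\|\cdot\|$ and of $\int\delta^{4/(n-2)}w^2$ makes the constant $C_0$ independent of $t$.

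For two bubbles we argue by contradiction. Suppose there are $t^k$ and $v_k\in E_{t^k}$ with $\|v_k\|=1$ for which the left side of \eqref{linear-} tends to a limit $\le 0$. Since $E_t$ is defined only for the relevant regime, we may assume $t^k_i\to\infty$. The bubbles at $P_1$ and $P_2$ then concentrate at distinct points.

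We take a cutoff partition $\chi_1^2+\chi_2^2+\chi_3^2=1$, with $\chi_i$ supported near $P_i$ on a fixed scale, and set $w_i=\chi_iv_k$. The cross terms produced by the gradients of the $\chi_i$ are $O(\|v_k\|_{L^2})$. By passing to a subsequence and using compactness, we must also control them on the scale of $v_k$. To do this we split $v_k$ into a part that concentrates at the bubble scale and a weakly vanishing remainder. A cleaner route is to work with the blown-up functions $\tilde v_k^{(i)}$ obtained by pulling back by the conformal map that normalizes $\delta_{P_i,t_i^k}$.

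These satisfy $\|\tilde v_k^{(i)}\|=1$ and converge weakly to limits $w^{(i)}$. The weak limits satisfy the limiting single-bubble orthogonality conditions, because $\langle v,\delta_{P_i,t_i}\rangle$ and $\langle v,\partial_{t_i}\delta_{P_i,t_i}\rangle$ are conformally invariant. The potential term converges:
\[
\int\delta_{P_i,t_i}^{4/(n-2)}v_k^2\to\int w^{(i)2}.
\]
This holds by compactness of the weighted embedding. The contributions of the other bubble vanish because the two bubbles are asymptotically orthogonal, with overlap $\sim(t_1t_2)^{-(n-2)/2}$ in the relevant norms.

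Weak lower semicontinuity of $\|\cdot\|$, together with the asymptotic splitting of the norm into essentially disjoint regions, then gives
\[
\liminf \text{LHS}\;\ge\; \sum_i Q[w^{(i)}]+\Big(1-\sum_i\|w^{(i)}\|^2\Big)\;\ge\; C_0\sum_i\|w^{(i)}\|^2+1-\sum_i\|w^{(i)}\|^2 .
\]
This is at least $\min(C_0,1)>0$, a contradiction.

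The main obstacle is the norm splitting $\|v_k\|^2\ge\sum_i\|w^{(i)}\|^2+o(1)$ with the remainder fully counted. This needs the two concentration profiles to be asymptotically orthogonal in $H^1$. That in turn relies on $t_1,t_2\to\infty$ at the antipodal points $P_1\neq P_2$. We will verify it by testing $v_k$ against the profiles $\delta$-weighted truncations and using a Brezis–Lieb-type decomposition. Since the lemma is due to \cite{B}, we may alternatively simply cite it. The argument above is the standard one, valid uniformly in $t$ once both $t_i$ are large. For $t$ in a compact range, the same contradiction argument applies directly, using strong convergence of the potential term.
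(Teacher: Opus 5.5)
The paper gives no proof of this lemma; it quotes the inequality from Bahri \cite{B}, where it is proved for sums of weakly interacting bubbles. Your argument is a genuinely different, self-contained route, and in the regime $t^k_1t^k_2\to\infty$ it is correct.

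Conformal invariance of $\|\cdot\|$, of $\int\delta^{4/(n-2)}v^2$ and of the orthogonality conditions lets you pull $v_k$ back to the normalized bubble at each pole. Rellich compactness then gives convergence of the potential terms. The single-bubble gap $C_0=\frac{4}{n+4}$ applies to each weak limit; axisymmetry leaves only $x^{n+1}$ in the kernel, so the $\partial_P$ directions of Bahri's general setting are not needed. The only coupling between the two bubbles is the bound $\sum_i\|w^{(i)}\|^2\le 1$.

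That last step is easier than your Brezis--Lieb sketch. Let $W^{(i)}_k$ be the push-forward of $w^{(i)}$. Then $\langle v_k,W^{(i)}_k\rangle=\langle\tilde v^{(i)}_k,w^{(i)}\rangle\to\|w^{(i)}\|^2$. Also $\langle W^{(1)}_k,W^{(2)}_k\rangle\to 0$, because the relative conformal dilation has factor $t^k_1t^k_2\to\infty$. Expanding $0\le\|v_k-W^{(1)}_k-W^{(2)}_k\|^2$ gives the bound, and the cutoff partition can be dropped entirely. The citation buys brevity and generality; your version buys a proof adapted to the axisymmetric two-bubble case that uses only the spectrum of $-\Delta$ on $\SSphere^n$.

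Your final sentence, however, is false, and so is the reason you give for restricting to large $t$. $E_t$ is defined for every $t\in\RR^2_+$, and the inequality genuinely fails for some $t$. Since $\delta_{P_1,t}=\delta_{P_2,1/t}$, the two bubbles coincide when $t_1t_2=1$. After normalizing them to $1$, the form becomes $\|v\|^2-\frac{n(n+2)}{2}\int v^2$ on axisymmetric $v\perp 1,x^{n+1}$. Take $v=(x^{n+1})^2-\frac{1}{n+1}$, an eigenfunction of $-\Delta$ with eigenvalue $2(n+1)$; the form then equals $\frac{(n+2)(4-n)}{4}\int v^2\le 0$ for $n\ge 4$.

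So on a compact range of $t$, your contradiction argument produces a nonzero weak limit in $E_{t^\infty}$ on which the form need not be positive, and the argument does not close. Axial dilations preserve $t_1t_2$, so the configuration depends only on $t_1t_2$ up to conformal equivalence. The lemma therefore has to be read with a hypothesis $t_1t_2\ge T_0(n)$, which is how it is used in Lemma \ref{linear'} (with $t_1,t_2>C$). Set up your contradiction sequence with $t^k_1t^k_2\to\infty$ and delete the compact-range claim.
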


The following lemma can be derived from Lemma \ref{linear}.

\begin{lem}\label{linear'}
There exists a positive constant $C=C(n,K)$ depending on $n$ and $K$ such that for any $t_{1}$, $t_{2}>C$, we have that
\begin{equation}\label{dingyuan}
\|v\|^{2}-c(n)\frac{n+2}{n-2}\int_{\SSphere^{n}}K(\sum\limits_{l=1}^{2}\bar{\alpha}_{l}\delta_{P_{l},t_{l}})^{\frac{4}{n-2}}v^{2}\geq C(n,K)\|v\|^{2},~~~~\forall~v\in E_{t}.
\end{equation}
\end{lem}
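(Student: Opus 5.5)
The plan is to derive Lemma \ref{linear'} from Lemma \ref{linear} by comparing the two quadratic forms. The difference between them is the integral
\[
c(n)\tfrac{n+2}{n-2}\int_{\SSphere^n}\Big[R_0\sum_{l=1}^2\delta_{P_l,t_l}^{\frac{4}{n-2}}-K\Big(\sum_{l=1}^2\bar\alpha_l\delta_{P_l,t_l}\Big)^{\frac{4}{n-2}}\Big]v^2 .
\]
It suffices to show this is at most $o(1)\|v\|^2$ as $\min(t_1,t_2)\to\infty$, uniformly in $v\in E_t$. Then taking $C(n,K)=C(n)/2$ and $t_1,t_2$ large finishes the proof.

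To estimate the difference, I split $\SSphere^n$ into three regions: a small geodesic ball $B_\rho(P_1)$, a small ball $B_\rho(P_2)$, and the complement. By Hölder and Sobolev,
\[
\int |W|v^2\le \|W\|_{L^{n/2}}\|v\|_{L^{2n/(n-2)}}^2\le C\|W\|_{L^{n/2}}\|v\|^2 .
\]
So I only need $\|W\|_{L^{n/2}}$ to be small, where $W$ is the bracketed weight.

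On the complement, each $\delta_{P_l,t_l}\le Ct_l^{-(n-2)/2}$ uniformly. Hence both terms of $W$ are $O(t_1^{-2}+t_2^{-2})$ pointwise there, and their $L^{n/2}$ norm tends to zero.

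On $B_\rho(P_1)$, the other bubble is tiny: $\delta_{P_2,t_2}\le Ct_2^{-(n-2)/2}$. So the sum $\sum_l\bar\alpha_l\delta_{P_l,t_l}$ equals $\bar\alpha_1\delta_{P_1,t_1}(1+\epsilon)$, with $\epsilon$ small except where $\delta_{P_1,t_1}$ itself is comparably small. Using $\bar\alpha_1^{4/(n-2)}=R_0/K_1$, the weight is bounded by
\[
R_0\,\delta_{P_1,t_1}^{\frac{4}{n-2}}\,\frac{|K_1-K|}{K_1}+C\big|(\delta_{P_1}+\eta)^{\frac{4}{n-2}}-\delta_{P_1}^{\frac{4}{n-2}}\big|+C\delta_{P_2,t_2}^{\frac{4}{n-2}},
\]
with $\eta=O(t_2^{-(n-2)/2})$. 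The $L^{n/2}$ norm of $\delta_{P_1,t_1}^{4/(n-2)}$ over $B_\rho(P_1)$ is bounded, since $\int\delta^{2n/(n-2)}$ is scale invariant. Because $|K-K_1|\le C\,\mathrm{dist}(x,P_1)^{n-2}$ by the expansion of $K$, the first term is small. The mass of $\delta_{P_1,t_1}^{2n/(n-2)}$ concentrates within distance $O(1/t_1)$, while outside any fixed radius $r$ it is $o(1)$, so one chooses $\rho$ small and then $t_1$ large. The second term is handled by the elementary inequality $|(a+b)^p-a^p|\le C b^p$ when $p=4/(n-2)\le1$ (that is, $n\ge6$), and by $C(a^{p-1}b+b^p)$ otherwise. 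Either way it is small in $L^{n/2}$ because $\eta\to0$ uniformly. The region $B_\rho(P_2)$ is treated symmetrically.

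The main obstacle is making the bubble-interaction estimate uniform. This concerns the region where $\delta_{P_1,t_1}$ and $\delta_{P_2,t_2}$ are comparable, which lies away from both poles, so the pointwise decay bounds above cover it. I expect no real difficulty beyond careful bookkeeping of the exponent $4/(n-2)$ in the cases $n=4,5$, where $p>1$.
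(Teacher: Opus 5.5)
Your argument is correct. It shares the paper's skeleton: reduce to Lemma \ref{linear}, then make the weight $W$ small in $L^{n/2}$ and conclude by H\"older and Sobolev. Where you differ is in how you control $W$. The paper does not localize. Using $K\bar\alpha_l^{\frac{4}{n-2}}=R_0K/K_l$, it splits $W$ globally into two pieces. The first is the interaction term $K\big[(\sum_l\bar\alpha_l\delta_{P_l,t_l})^{\frac{4}{n-2}}-\sum_l\bar\alpha_l^{\frac{4}{n-2}}\delta_{P_l,t_l}^{\frac{4}{n-2}}\big]$, bounded via $|(a+b)^p-a^p-b^p|\le Ca^{p/2}b^{p/2}$ and Lemma \ref{L2}; this is \eqref{V1-1}, with rate $(t_1t_2)^{-1}(\ln(t_1t_2))^{2/n}$. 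The second consists of the terms $\bar\alpha_l^{\frac{4}{n-2}}(K-K_l)\delta_{P_l,t_l}^{\frac{4}{n-2}}$, bounded by \eqref{KE1-1} with rate $t_l^{-2}$ (times $(\ln t_l)^{1/2}$ if $n=4$). Your three-region splitting gives only qualitative $o(1)$ bounds, with limits taken in the order $\rho\to0$ first, then $t_1,t_2\to\infty$. That is all this lemma needs, and it is more self-contained. The paper's route gives explicit rates and uses the same machinery (Lemmas \ref{V1}, \ref{KE1}) that the reduction needs elsewhere.

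One remark in your last paragraph is inaccurate. In the stereographic coordinates of the appendix, $\delta_{P_1,t_1}=\delta_{P_2,t_2}$ exactly on $|y|^2=t_1/t_2$. So the region where the bubbles are comparable drifts into a polar cap when $t_1/t_2\to0$ or $\infty$, and the lemma imposes no bound on this ratio. This does not damage the proof. On $B_\rho(P_1)$ you only use $\bar\alpha_2\delta_{P_2,t_2}\le C(\rho)t_2^{-\frac{n-2}{2}}$ together with inequalities valid for all $a,b\ge0$, so your estimates are uniform in $t_1/t_2$ anyway.
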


\begin{proof}
By a direct computation,
\begin{align*}
&\quad\|v\|^{2}-c(n)\frac{n+2}{n-2}\int_{\SSphere^{n}}K\Big(\sum\limits_{l=1}^{2}\bar{\alpha}_{l}\delta_{P_{l},t_{l}}\Big)^{\frac{4}{n-2}}v^{2}\\
&=\left(\|v\|^{2}-c(n)\frac{n+2}{n-2}R_{0}\sum\limits_{l=1}^{2}\int_{\SSphere^{n}}\delta_{P_{l},t_{l}}^{\frac{4}{n-2}}v^{2}\right)\\
&\quad\quad-c(n)\frac{n+2}{n-2}\int_{\SSphere^{n}}K\Big[\Big(\sum\limits_{l=1}^{2}\bar{\alpha}_{l}\delta_{P_{l},t_{l}}\Big)^{\frac{4}{n-2}}-\sum\limits_{l=1}^{2}\bar{\alpha}_{l}^{\frac{4}{n-2}}\delta_{P_{l},t_{l}}^{\frac{4}{n-2}}\Big]v^{2}\\
&\quad\quad\quad-c(n)\frac{n+2}{n-2}\sum\limits_{l=1}^{2}\bar{\alpha}_{l}^{\frac{4}{n-2}}\int_{\SSphere^{n}}(K-K_{l})\delta_{P_{l},t_{l}}^{\frac{4}{n-2}}v^{2}.
\end{align*}
Then we can obtain (\ref{dingyuan}) by inserting (\ref{linear-}) in Lemma \ref{linear}, (\ref{V1-1}) in Lemma \ref{V1} and (\ref{KE1-1}) in Lemma \ref{KE1} into the above equality.

\end{proof}

\subsubsection{Solving for $(\alpha,v)$ in terms of $t$}

Now we apply the implicit theorem Lemma \ref{ift} to solve the equation
\begin{equation}\label{preequ}
F(\alpha,v)=0,
\end{equation}
where $F$ is defined as in (\ref{Fdef}).

\begin{prop}\label{infinite}
There exists a constant $C=C(n,K)>0$ depending on $n$ and $K$ such that for any $t_{1}$, $t_{2}>C$, there exists a unique $(\alpha(t),v(t))$ that solves (\ref{preequ}) and we have
\begin{equation}\label{yujia}
|\alpha(t)-\bar{\alpha}|+\|v(t)\|\leq C(n,K)\begin{cases}
(t_{1}t_{2})^{-\frac{n-2}{2}}+\sum\limits_{l=1}^{2}t_{l}^{-(n-2)},&n=4,5,\\
(t_{1}t_{2})^{-2}(\ln(t_{1}t_{2}))^{\frac{2}{3}}+\sum\limits_{l=1}^{2}t_{l}^{-4}(\ln t_{l})^{\frac{2}{3}},&n=6,\\
(t_{1}t_{2})^{-\frac{n+2}{4}}(\ln(t_{1}t_{2}))^{\frac{n+2}{2n}}+\sum\limits_{l=1}^{2}t_{l}^{-\frac{n+2}{2}},&n\geq 7.\end{cases}
\end{equation}
Moreover, $\alpha$ and $v$ are $C^{1}$ in $t$.
\end{prop}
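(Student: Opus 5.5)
We will apply Lemma \ref{ift} to the map $F$ of (\ref{Fdef}), for fixed $t$ with $t_1,t_2$ large, on $X=Y=\RR^2\times E_t$, centred at $z_0=(\bar\alpha,0)$. The radius will be $a=C_*\epsilon(t)$, where $\epsilon(t)$ is the right-hand side of (\ref{yujia}) and $C_*$ is a large constant. We need three facts: $F'(z_0)$ is invertible with $\|F'(z_0)^{-1}\|$ bounded uniformly in $t$; the residual satisfies $\|F(z_0)\|\le C\epsilon(t)$; and $F'$ varies little on $B_a(z_0)$. Together these give (\ref{liutan}) and (\ref{liuyuxi}), hence a unique zero $(\alpha(t),v(t))\in B_a(z_0)$, and this is exactly the bound (\ref{yujia}). $C^1$ dependence on $t$ then follows from the standard implicit function theorem at the solution, using the same invertibility. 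One technical point is that $E_t$ depends on $t$. We handle it by composing with a $C^1$-in-$t$ family of isomorphisms $E_{t}\to E_{t_0}$, for instance $\Pi_{t}$ restricted to $E_{t_0}$, which is close to the identity for $t$ near $t_0$.

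For invertibility we write $F'(z_0)$ in block form. The $\alpha\alpha$-block is $c_0 I$ minus $c(n)\frac{n+2}{n-2}\int K\,u_0^{4/(n-2)}\delta_{P_i}\delta_{P_j}$, where $u_0=\sum\bar\alpha_l\delta_{P_l,t_l}$. Since $K\to K_i$ near $P_i$ and $\bar\alpha_i^{4/(n-2)}K_i=R_0$, the diagonal entries equal $c_0\bigl(1-\frac{n+2}{n-2}\bigr)+o(1)=-\frac{4}{n-2}c_0+o(1)$. The off-diagonal entries are $O((t_1t_2)^{-(n-2)/2})$, by the interaction estimates of Lemma \ref{A1} and Lemma \ref{V1}. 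The $vv$-block is coercive on $E_t$ by Lemma \ref{linear'}. The mixed blocks $\int K u_0^{4/(n-2)}\delta_{P_i,t_i}\varphi$ are small. Indeed, $-L_{g_0}\delta_{P_i,t_i}=c(n)R_0\delta_{P_i,t_i}^{(n+2)/(n-2)}$ and $\varphi\perp\delta_{P_i,t_i}$, so these blocks reduce to terms involving $K-K_i$ and cross terms, which are $o(1)\|\varphi\|$ (Lemma \ref{KE1}). Hence $F'(z_0)$ is a small perturbation of an invertible block-diagonal operator.

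The main work is the residual estimate. For $F_1(z_0)$ we expand $\int K u_0^{(n+2)/(n-2)}\delta_{P_1,t_1}$. The self-term $\int K_1\bar\alpha_1^{(n+2)/(n-2)}\delta^{2n/(n-2)}$ cancels $c_0\bar\alpha_1$ exactly. What remains is:
\begin{itemize}
\item the flatness term $\int (K-K_1)\delta_{P_1,t_1}^{2n/(n-2)}=O(t_1^{-(n-2)})$, because $K-K_1=O(\theta^{n-2})$ near the pole (here $(\pi-\theta)^{n-2}$ in the paper's convention);
\item the interaction terms, of size $O((t_1t_2)^{-(n-2)/2})$, including the linear-in-$\delta_{P_2}$ piece which nearly cancels $\bar\alpha_2\langle\delta_{P_1},\delta_{P_2}\rangle$;
\item the nonlinear cross terms $\int \delta_{P_1}^{a}\delta_{P_2}^{b}$ with $a+b=2n/(n-2)$.
\end{itemize}
For $F_3(z_0)=-\Pi_t(-L_{g_0})^{-1}(c(n)Ku_0^{(n+2)/(n-2)})$ we estimate by duality: it equals the supremum over $\|\varphi\|\le1$, $\varphi\in E_t$, of $\int K u_0^{(n+2)/(n-2)}\varphi$. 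Using the orthogonality of $\varphi$ to $\delta_{P_i,t_i}$, this becomes
\[
\int (K-K_i)\delta_{P_i,t_i}^{(n+2)/(n-2)}\varphi \;+\; \int\Bigl[u_0^{\frac{n+2}{n-2}}-\sum_i(\bar\alpha_i\delta_{P_i,t_i})^{\frac{n+2}{n-2}}\Bigr]\varphi .
\]
We bound both terms by H\"older in $L^{2n/(n+2)}$. For the first term, $\bigl(\int |\theta|^{\frac{2n(n-2)}{n+2}}\delta^{\frac{2n}{n-2}}\bigr)^{\frac{n+2}{2n}}$ is about $t^{-(n-2)}$ when $n\le5$, has a log correction when $n=6$, and saturates at $t^{-(n+2)/2}$ when $n\ge7$ because the integral is then dominated away from the pole. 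For the second term, the interaction $\|\delta_1^{4/(n-2)}\delta_2\|_{L^{2n/(n+2)}}$ is of order $(t_1t_2)^{-(n-2)/2}$ for $n\le5$, and of order $(t_1t_2)^{-(n+2)/4}(\ln)^{(n+2)/(2n)}$ for $n\ge6$ because the exponent $4/(n-2)\le1$. This threshold, with its logarithms, is the step we expect to be most delicate. It will be done by splitting $\SSphere^n$ into the two hemispheres and integrating in stereographic coordinates, and it reproduces exactly the three cases of (\ref{yujia}).

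Finally, condition (\ref{liuyuxi}) follows once we bound $\|F'(z)-F'(z_0)\|$ by $C\bigl(|\alpha-\bar\alpha|+\|v\|\bigr)^{\min(1,4/(n-2))}$ using Sobolev embedding and the elementary inequality $\bigl||x|^{p-1}-|y|^{p-1}\bigr|\le C|x-y|^{\min(p-1,1)}(|x|+|y|)^{\dots}$. This is small for $a\to0$, so (\ref{liuyuxi}) holds for $t_1,t_2>C$. Choosing $C_*$ large relative to $\|F'(z_0)^{-1}\|$ and the residual constant then gives (\ref{liutan}), which completes the argument.
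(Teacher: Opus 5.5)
Your proposal is correct and follows the paper's proof essentially step for step. The paper also applies Lemma \ref{ift} at $z_0=(\bar\alpha,0)$ on $\RR^2\times E_t$, with radius a large multiple of the right-hand side of (\ref{yujia}). It uses the same three ingredients: a residual bound from the $K-K_i$ and bubble-interaction estimates, a uniformly bounded $F'(z_0)^{-1}$ (diagonal $\alpha$-entries $-\frac{4}{n-2}c_0+o(1)$, coercivity from Lemma \ref{linear'}, small mixed blocks), and a H\"older-type modulus of continuity for $F'$. Your transport of $E_t$ to a fixed $E_{t_0}$ justifies the $C^1$ claim more carefully than the paper's one-line assertion.

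One correction: for $n\ge 7$ the global norm $\|\delta_{P_1,t_1}^{4/(n-2)}\delta_{P_2,t_2}\|_{L^{2n/(n+2)}(\SSphere^n)}$ is of exact order $(t_1t_2)^{-2}$, not the rate you state, because its mass sits at the core of $\delta_{P_2,t_2}$. So you genuinely need either the hemisphere splitting you mention or the paper's pointwise bound by $(\delta_{P_1,t_1}\delta_{P_2,t_2})^{\frac{n+2}{2(n-2)}}$ to reach (\ref{yujia}).
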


\begin{proof}

We apply Lemma \ref{ift} to $z_{0}=(\bar{\alpha},0)$ and $F$ in the Banach space $\RR^{2}\times E_{t}$, with norm
\begin{equation*}
\|(\alpha,v)\|:=\sum\limits_{i=1}^{2}|\alpha_{i}|+\|v\|.
\end{equation*}
We prove the conclusion in three steps.

\bigskip
\noindent
\underline{Step 1.} There exists a constant $C=C(n,K)>0$ depending on $n$ and $K$ such that if $t_{1}$, $t_{2}>C$, we have that
\begin{equation}\label{kaoshiyuan}
\|F(z_{0})\|\leq C(n,K)\begin{cases}
(t_{1}t_{2})^{-\frac{n-2}{2}}+\sum\limits_{l=1}^{2}t_{l}^{-(n-2)},&n=4,5,\\
(t_{1}t_{2})^{-2}(\ln(t_{1}t_{2}))^{\frac{2}{3}}+\sum\limits_{l=1}^{2}t_{l}^{-4}(\ln t_{l})^{\frac{2}{3}},&n=6,\\
(t_{1}t_{2})^{-\frac{n+2}{4}}(\ln(t_{1}t_{2}))^{\frac{n+2}{2n}}+\sum\limits_{l=1}^{2}t_{l}^{-\frac{n+2}{2}},&n\geq 7.\end{cases}
\end{equation}
Indeed, for $i$, $j=1$, $2$, $i\neq j$,
\begin{equation}\label{wangchongwen}
F_{i}(z_{0})=c_{0}\bar{\alpha}_{i}+\bar{\alpha}_{j}\langle\delta_{P_{i},t_{i}},\delta_{P_{j},t_{j}}\rangle-c(n)\int_{\SSphere^{n}}K(\sum\limits_{l=1}^{2}\bar{\alpha}_{l}\delta_{P_{l},t_{l}})^{\frac{n+2}{n-2}}\delta_{P_{i},t_{i}},
\end{equation}
and
\begin{equation}\label{sundewei----}
F_{3}(z_{0})[\varphi]=-c(n)\int_{\SSphere^{n}}K(\sum\limits_{l=1}^{2}\bar{\alpha}_{l}\delta_{P_{l},t_{l}})^{\frac{n+2}{n-2}}\varphi,~~~~\forall~\varphi\in E_{t}.
\end{equation}
Inserting (\ref{AB1-}) in Lemma \ref{A1}, (\ref{A2-1}) in Lemma \ref{A2}, (\ref{hongbing}) in Lemma \ref{A17} into (\ref{wangchongwen}) and inserting (\ref{niuzhen}) in Lemma \ref{KE2} into (\ref{sundewei----}), we obtain (\ref{kaoshiyuan}).

\bigskip
\noindent
\underline{Step 2.} There exists a constant $C=C(n,K)>0$ depending on $n$ and $K$ such that if $t_{1}$, $t_{2}>C$, $F'$ is invertible at $z_{0}$ and
\begin{equation}\label{shipengfei}
\|F'(z_{0})^{-1}\|\leq C.
\end{equation}
Indeed, set $F'(z_{0})(\tilde{\alpha},\tilde{v})=\beta$. Direct computations yield that for $i$, $j=1$, $2$, $i\neq j$,
\begin{align}
\beta_{i}&=\left(c_{0}-c(n)\frac{n+2}{n-2}\int_{\SSphere^{n}}K(\sum\limits_{l=1}^{2}\bar{\alpha}_{l}\delta_{P_{l},t_{l}})^{\frac{4}{n-2}}\delta_{P_{i},t_{i}}^{2}\right)\tilde{\alpha}_{i}\nonumber\\
&\quad\quad+\left(\langle\delta_{P_{i},t_{i}},\delta_{P_{j},t_{j}}\rangle-c(n)\frac{n+2}{n-2}\int_{\SSphere^{n}}K(\sum\limits_{l=1}^{2}\bar{\alpha}_{l}\delta_{P_{l},t_{l}})^{\frac{4}{n-2}}\delta_{P_{1},t_{1}}\delta_{P_{2},t_{2}}\right)\tilde{\alpha}_{j}\nonumber\\
&\quad\quad-c(n)\frac{n+2}{n-2}\int_{\SSphere^{n}}K(\sum_{l = 1}^2\bar{\alpha}_{l}\delta_{P_{l},t_{l}})^{\frac{4}{n-2}}\delta_{P_{i},t_{i}}\tilde{v},\label{liufangkunhan}
\end{align}
and for any $\varphi\in E_{t}$,
\begin{align*}
\beta_{3}[\varphi]&=-\frac{n+2}{n-2}c(n)\int_{\SSphere^{n}}K(\sum\limits_{l=1}^{2}\bar{\alpha}_{l}\delta_{P_{l},t_{l}})^{\frac{4}{n-2}}(\sum\limits_{q=1}^{2}\tilde{\alpha}_{q}\delta_{P_{q},t_{q}})\varphi\\
&\quad\quad\quad\quad+\langle \tilde{v},\varphi\rangle-\frac{n+2}{n-2}c(n)\int_{\SSphere^{n}}K(\sum\limits_{l=1}^{2}\bar{\alpha}_{l}\delta_{P_{l},t_{l}})^{\frac{4}{n-2}}\tilde{v}\varphi.
\end{align*}

Inserting (\ref{V1-2}) in Lemma \ref{V1}, (\ref{KE1-2}) in Lemma \ref{KE1} and (\ref{dingyuan}) in Lemma \ref{linear'} into the above equality with $\varphi=\tilde{v}$, we have that for $t_{1}$ and $t_{2}$ sufficiently large,
\begin{equation}\label{zhaozhicheng}
\|\tilde{v}\|\leq C(n,K)(\|\beta_{3}\|+o(1)|\tilde{\alpha}|).
\end{equation}

Inserting (\ref{hongbing-}) and (\ref{hongbing--}) in Lemma \ref{A17}, (\ref{A2-1}) in Lemma \ref{A2}, (\ref{V1-2}) in Lemma \ref{V1}, (\ref{KE1-2}) in Lemma \ref{KE1} into (\ref{liufangkunhan}), we have that for $t_{1}$ and $t_{2}$ sufficiently large,
\begin{equation*}
|\tilde{\alpha}_{i}|\leq C(n,K)(|\beta_{i}|+o(1)(|\tilde{\alpha}_{j}|+\|\tilde{v}\|)).
\end{equation*}

Inserting (\ref{zhaozhicheng}) into the above inequality, we have that
\begin{equation}\label{niumiaomiao}
|\tilde{\alpha}_{i}|\leq C(n,K)(|\beta_{i}|+\|\beta_{3}\|+o(1)|\tilde{\alpha}_{j}|).
\end{equation}
Analogously, we also have that
\begin{equation}\label{songqianqian}
|\tilde{\alpha}_{j}|\leq C(n,K)(|\beta_{j}|+\|\beta_{3}\|+o(1)|\tilde{\alpha}_{i}|).
\end{equation}
Combining (\ref{zhaozhicheng}), (\ref{niumiaomiao}) and (\ref{songqianqian}) together, we can obtain that $\|(\tilde{\alpha},\tilde{v})\|\leq C(n,K)\|\beta\|$.

\bigskip
\noindent
\underline{Step 3.} We choose suitable $\theta$ and $a$ to show that there exits a constant $C=C(n,K)>0$ depending on $n$ and $K$ such that for $t_{1}$, $t_{2}>C$, we have that (\ref{liutan}) and (\ref{liuyuxi}). Indeed, direct computations yield that for $i$, $j=1$, $2$, $i\neq j$,
\begin{align}
&(F_{i}'(z)-F_{i}'(z_{0}))(\tilde{\alpha},\tilde{v})\nonumber\\
&\qquad =-c(n)\frac{n+2}{n-2}\Big[\tilde{\alpha}_{i}\int_{\SSphere^{n}}K\Big(|u|^{\frac{4}{n-2}}-(\sum\limits_{l=1}^{2}\bar{\alpha}_{l}\delta_{P_{l},t_{l}})^{\frac{4}{n-2}}\Big)\delta_{P_{i},t_{i}}^{2}\nonumber\\
&\qquad\qquad +\tilde{\alpha}_{j}\int_{\SSphere^{n}}K\Big(|u|^{\frac{4}{n-2}}-(\sum\limits_{l=1}^{2}\bar{\alpha}_{l}\delta_{P_{l},t_{l}})^{\frac{4}{n-2}}\Big)\delta_{P_{i},t_{i}}\delta_{P_{j},t_{j}}\nonumber\\
&\qquad\qquad+\int_{\SSphere^{n}}K\Big(|u|^{\frac{4}{n-2}}-(\sum\limits_{l=1}^{2}\bar{\alpha}_{l}\delta_{P_{l},t_{l}})^{\frac{4}{n-2}}\Big)\delta_{P_{i},t_{i}}\tilde{v}\Big],\label{liufangkunhan--}
\end{align}
and for any $\varphi\in E_{t}$,
\begin{align}
&(F_{3}'(z)-F_{3}'(z_{0}))(\tilde{\alpha},\tilde{v})[\varphi]\nonumber\\
&\qquad=-c(n)\frac{n+2}{n-2}\Big[\tilde{\alpha}_{i}\int_{\SSphere^{n}}K\Big(|u|^{\frac{4}{n-2}}-(\sum\limits_{l=1}^{2}\bar{\alpha}_{l}\delta_{P_{l},t_{l}})^{\frac{4}{n-2}}\Big)\delta_{P_{i},t_{i}}\varphi\nonumber\\
&\qquad\qquad+\tilde{\alpha}_{j}\int_{\SSphere^{n}}K\Big(|u|^{\frac{4}{n-2}}-(\sum\limits_{l=1}^{2}\bar{\alpha}_{l}\delta_{P_{l},t_{l}})^{\frac{4}{n-2}}\Big)\delta_{P_{j},t_{j}}\varphi\nonumber\\
&\qquad\qquad+\int_{\SSphere^{n}}K\Big(|u|^{\frac{4}{n-2}}-(\sum\limits_{l=1}^{2}\bar{\alpha}_{l}\delta_{P_{l},t_{l}})^{\frac{4}{n-2}}\Big)\tilde{v}\varphi\Big].\label{liufangkunhan--''}
\end{align}

Inserting (\ref{yuanmeng}), (\ref{yangxiu}), (\ref{ranshiyou}) in Lemma \ref{lidehuang} into (\ref{liufangkunhan--}) and inserting (\ref{ranshiyou}), (\ref{zhupinxiu}) in Lemma \ref{lidehuang} into (\ref{liufangkunhan--''}), we can conclude that there exists a constant $C=C(n)>0$ depending only on $n$ such that for $t_{1}$, $t_{2}>C$ and $|\alpha_{1}-\bar{\alpha}_{1}|\leq\frac{1}{2}\bar{\alpha}_{1}$, $|\alpha_{2}-\bar{\alpha}_{2}|\leq\frac{1}{2}\bar{\alpha}_{2}$,
\begin{equation*}
\|F'(z)-F'(z_{0})\|\leq C(n,K)(\|v\|^{\frac{4}{n-2}}+\|v\|+|\alpha-\bar{\alpha}|^{\frac{4}{n-2}}+|\alpha-\bar{\alpha}|).
\end{equation*}
By the above inequality and (\ref{shipengfei}), we have that
\begin{equation*}
\|F'(z_{0})\|^{-1}\|F'(z)-F'(z_{0})\|\leq C(n,K)(a^{\frac{4}{n-2}}+a),~~~~\forall~|z-z_{0}|\leq a.
\end{equation*}
By (\ref{kaoshiyuan}) and (\ref{shipengfei}), we also have that
\begin{equation*}
\|F'(z_{0})^{-1}F(z_{0})\|\leq C(n,K)\begin{cases}
(t_{1}t_{2})^{-\frac{n-2}{2}}+\sum\limits_{l=1}^{2}t_{l}^{-(n-2)},&n=4,5,\\
(t_{1}t_{2})^{-2}(\ln(t_{1}t_{2}))^{\frac{2}{3}}+\sum\limits_{l=1}^{2}t_{l}^{-4}(\ln t_{l})^{\frac{2}{3}},&n=6,\\
(t_{1}t_{2})^{-\frac{n+2}{4}}(\ln(t_{1}t_{2}))^{\frac{n+2}{2n}}+\sum\limits_{l=1}^{2}t_{l}^{-\frac{n+2}{2}},&n\geq 7.\end{cases}
\end{equation*}
Now we fix $\theta=\frac{1}{2}$, take $\tilde{C}>2C(n,K)$ and set
$$a=\tilde{C}\begin{cases}
(t_{1}t_{2})^{-\frac{n-2}{2}}+\sum\limits_{l=1}^{2}t_{l}^{-(n-2)},&n=4,5,\\
(t_{1}t_{2})^{-2}(\ln(t_{1}t_{2}))^{\frac{2}{3}}+\sum\limits_{l=1}^{2}t_{l}^{-4}(\ln t_{l})^{\frac{2}{3}},&n=6,\\
(t_{1}t_{2})^{-\frac{n+2}{4}}(\ln(t_{1}t_{2}))^{\frac{n+2}{2n}}+\sum\limits_{l=1}^{2}t_{l}^{-\frac{n+2}{2}},&n\geq 7.\end{cases}$$
Then we can conclude that (\ref{liutan})-(\ref{liuyuxi}) hold for $t_{1}$, $t_{2}$ sufficiently large.

We can now apply Lemma \ref{ift} to obtain the existence and uniqueness of the solution ($\alpha(t)$, $v(t)$) of (\ref{preequ}). Furthermore, $\alpha$, $v$ are $C^{1}$ in $t$.
\end{proof}

\subsection{Solvability of the reduced finite-dimensional problem}\label{SSec:Sol}

In this section, we solve the system:
\begin{equation}\label{sysH}
H(t):=(H_{1}(t),H_{2}(t))=(0,0),
\end{equation}
where
\begin{equation*}
H_{i}(t):=\frac{\partial}{\partial t_{i}}[J(t,\alpha(t),v(t))],~~~~i=1,2
\end{equation*}
with $(\alpha(t),v(t))$ obtained from Proposition \ref{infinite}.

Using the equation (\ref{F1def})-(\ref{F3def}) for $(\alpha(t),v(t))$ and the fact that $v(t)\in E_{t}$, a direct computation gives for $i$, $j=1$, $2$, $i\neq j$ that
\begin{align*}
H_{i}(t)&=\alpha_{i}\left(\alpha_{j}\frac{\partial}{\partial t_{i}}\langle \delta_{P_{i},t_{i}},\delta_{P_{j},t_{j}}\rangle-c(n)\int_{\SSphere^{n}}K|u|^{\frac{4}{n-2}}u\frac{\partial\delta_{P_{i},t_{i}}}{\partial t_{i}}\right)\\
&\quad\quad\quad\quad-c(n)\int_{\SSphere^{n}}K|u|^{\frac{4}{n-2}}u(1-\Pi_{t})(\frac{\partial v}{\partial t_{i}}).
\end{align*}
In order to estimate $H_{i}(t)$, we split $H_{i}(t)$ into two parts:
\begin{equation*}
H_{i}(t)=\bar{H}_{i}(t)+(H_{i}(t)-\bar{H}_{i}(t)),
\end{equation*}
where
\begin{equation*}
\bar{H}_{i}(t):=\bar{\alpha}_{i}\left(\bar{\alpha}_{j}\frac{\partial}{\partial t_{i}}\langle \delta_{P_{i},t_{i}},\delta_{P_{j},t_{j}}\rangle-c(n)\int_{\SSphere^{n}}K(\sum\limits_{l=1}^{2}\bar{\alpha}_{l}\delta_{P_{l},t_{l}})^{\frac{n+2}{n-2}}\frac{\partial\delta_{P_{i},t_{i}}}{\partial t_{i}}\right).
\end{equation*}

The following proposition gives the estimate of $\bar{H}_{i}(t)$.
\begin{prop}\label{youyue}
There exists a constant $C=C(n,K)>0$ depending on $n$ and $K$ such that if $t_{1}$, $t_{2}>C$, we have that
\begin{equation}\label{mocha}
\bar{H}_{i}(t)=t_{i}^{-1}[A_{i}t_{i}^{-(n-2)}+B(t_{1}t_{2})^{-\frac{n-2}{2}}+D_{i}t_{i}^{-\mu_{i}}+\tilde{R}_{i}(t)],
\end{equation}
where
\begin{align}
A_{i}&=\frac{2^{2n-6 }(n-2)^3}{n-1} R_0^{\frac{n-2}{2}}|\SSphere^{n-1}| a_{i}K^{-\frac{n}{2}}(P_{i}),\label{defA}
\\
B&=2^{n-3} (n-2)^2R_0^{\frac{n-2}{2}}|\SSphere^{n-1}| (K(P_{1})K(P_{2}))^{-\frac{n-2}{4}},\label{defB}
\\
D_{i}&=2^{n-3+\mu_i}\mu_i R_0^{\frac{n-2}{2}}|\SSphere^{n-1}|\Big(\int_{0}^{\infty}\frac{r^{n+\mu_i-1}dr}{(1+r^{2})^{n}}\Big)b_{i}K^{-\frac{n}{2}}(P_{i}),\label{defD}
\end{align}
and
\begin{equation*}
\tilde{R}_{i}(t)=o(t_{i}^{-\mu_{i}})+(t_{1}t_{2})^{-\frac{n-2}{2}}\begin{cases}
O(t_{1}^{-2}\ln t_{1}+t_{2}^{-2}\ln t_{2}+(t_{1}t_{2})^{-1}\ln(t_{1}t_{2})),&n=4,\\
O(t_{1}^{-2}+t_{2}^{-2}+(t_{1}t_{2})^{-1}\ln(t_{1}t_{2})),&n\geq5.
\end{cases}
\end{equation*}

\end{prop}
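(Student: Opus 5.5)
The estimate is obtained by expanding the two terms in the definition of $\bar H_i(t)$ separately, using explicit integrals against the standard bubbles $\delta_{P,t}$ in stereographic coordinates centred at $P_i$. By symmetry it suffices to treat $i=2$, so the relevant pole is $P_2$ at $\theta=0$, with $K=K_2+a_2\theta^{n-2}+b_2\theta^{\mu_2}+R^{(2)}$ there.

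\textbf{Step 1: the interaction terms.} Write $\varepsilon_{12}:=\langle\delta_{P_1,t_1},\delta_{P_2,t_2}\rangle$. I would use the expansion of $\varepsilon_{12}$ and of $\int\delta_{P_1,t_1}^{\frac{n+2}{n-2}}\delta_{P_2,t_2}$ for antipodal points, which should follow from Lemma \ref{A2} and Lemma \ref{A17}. These have the form $c\,(t_1t_2)^{-\frac{n-2}{2}}(1+O(t_1^{-2}+t_2^{-2}))$. Differentiating in $t_i$ gives $t_i^{-1}$ times a multiple of $(t_1t_2)^{-\frac{n-2}{2}}$.

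Next, expand $(\bar\alpha_1\delta_{P_1,t_1}+\bar\alpha_2\delta_{P_2,t_2})^{\frac{n+2}{n-2}}$ on the hemisphere near $P_i$:
\[
(\bar\alpha_i\delta_{P_i,t_i})^{\frac{n+2}{n-2}}+\tfrac{n+2}{n-2}(\bar\alpha_i\delta_{P_i,t_i})^{\frac4{n-2}}\bar\alpha_j\delta_{P_j,t_j}+\text{remainder},
\]
and do the same on the other hemisphere. The cross term, integrated against $\partial_{t_i}\delta_{P_i,t_i}$ with $K\approx K(P_i)$, combines with the $\bar\alpha_j\partial_{t_i}\varepsilon_{12}$ term. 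Using $\bar\alpha_i^{4/(n-2)}K_i=R_0$, these two pieces should add up to $t_i^{-1}B(t_1t_2)^{-\frac{n-2}{2}}$. The remainders, including the nonlinear interaction errors in dimension $n=4$ where $\frac{n+2}{n-2}=3$, would be handled by Lemma \ref{lidehuang}-type pointwise inequalities. This produces the $O((t_1t_2)^{-\frac{n-2}{2}}(\ldots))$ part of $\tilde R_i$, with logarithms appearing when $n=4$ or from integrals like $\int\delta_i^{\frac{n+2}{n-2}}\delta_j\,|x|^2$.

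\textbf{Step 2: the self terms.} The remaining term is
\[
-c(n)\bar\alpha_i^{\frac{2n}{n-2}}\int K\,\delta_{P_i,t_i}^{\frac{n+2}{n-2}}\,\partial_{t_i}\delta_{P_i,t_i}
=-\tfrac{n-2}{2n}\,c(n)\,\bar\alpha_i^{\frac{2n}{n-2}}\,\partial_{t_i}\!\int K\,\delta_{P_i,t_i}^{\frac{2n}{n-2}}.
\]
The constant $K_i$ contributes nothing, since $\int\delta^{2n/(n-2)}$ is independent of $t$. In the stereographic variable $y=t_i\tan(\theta/2)$ one has $\theta\approx 2y/t_i$. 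The terms $a_i\theta^{n-2}$ and $b_i\theta^{\mu_i}$ then give $t_i^{-1}A_it_i^{-(n-2)}$ and $t_i^{-1}D_it_i^{-\mu_i}$. The constants come from $\int r^{n-1+\gamma}(1+r^2)^{-n}$ after differentiating $t^{-\gamma}$, and the powers of $K(P_i)^{-n/2}$ come from $\bar\alpha_i^{2n/(n-2)}$.

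Two error terms remain. The correction $\tan(\theta/2)$ versus $\theta/2$ to the $a_i$ term is $O(t^{-n})=o(t^{-\mu_i})$ because $\mu_i<n$. The $R^{(i)}$ term is $o(t_i^{-\mu_i})$ by \eqref{higher0X}. For the latter I would write the derivative as $\int R\,\partial_t(\delta^{2n/(n-2)})$, bound $|\partial_t\delta^{2n/(n-2)}|\lesssim t^{-1}\delta^{2n/(n-2)}$, and apply dominated convergence with $\omega(\theta)\to0$. The far region contributes $O(t^{-n})$.

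\textbf{Main obstacle.} The main difficulty is Step 1: the precise cancellation and bookkeeping that yields the exact constant $B$, with an error of relative size $O(t^{-2})$ uniformly in both $t_1$ and $t_2$. This requires splitting the sphere into the two hemispheres and controlling $(\delta_1+\delta_2)^p-\delta_1^p-p\delta_1^{p-1}\delta_2$ near each pole. The delicate case is $n=4$, where the logarithmic terms arise.
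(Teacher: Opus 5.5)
Your decomposition is the same as the paper's. You split $\bar H_i$ into:
\begin{itemize}
\item the self term $-c(n)\bar\alpha_i^{\frac{2n}{n-2}}\int K\delta_{P_i,t_i}^{\frac{n+2}{n-2}}\partial_{t_i}\delta_{P_i,t_i}$;
\item three interaction terms: the $\bar\alpha_i\bar\alpha_j\partial_{t_i}\varepsilon_{12}$ term, the $\delta_{P_j,t_j}^{\frac{n+2}{n-2}}$ term, and the cross term with $\frac{n+2}{n-2}\delta_{P_i,t_i}^{\frac{4}{n-2}}\delta_{P_j,t_j}$;
\item a nonlinear remainder.
\end{itemize}

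Your Step 2 is a correct and slightly slicker version of Lemma \ref{A3}. Writing the self term as $\frac{n-2}{2n}\partial_{t_i}\int K\delta_{P_i,t_i}^{\frac{2n}{n-2}}$ makes $A_i$ and $D_i$ appear as $t$-derivatives of $t^{-\gamma}\int_0^\infty r^{n-1+\gamma}(1+r^2)^{-n}dr$. The paper instead integrates $K-K_i$ against the kernel $(1-r^2)r^{n-1}(1+r^2)^{-n-1}$ and uses integration-by-parts identities. If you carry your computation out, you will get $D_i$ with an extra factor $(n-2)^2$ compared with the displayed formula. This agrees with \eqref{A3-1} multiplied by $-c(n)\bar\alpha_i^{\frac{2n}{n-2}}=-\frac{n(n-2)}{4}R_0^{\frac{n-2}{2}}K_i^{-\frac n2}$, so the displayed $D_i$ appears to have dropped that factor. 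This is harmless for Theorem \ref{main thm}, where the factor is common to $D_1$ and $D_2$ and only signs matter.

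\textbf{The accounting for $B$ in Step 1 is wrong as stated.} Freeze $K$ at $K_i$ (respectively $K_j$) and use three facts: $-L_{g_0}\delta=\frac{n(n-2)}{4}\delta^{\frac{n+2}{n-2}}$, $-L_{g_0}\partial_t\delta=\frac{n(n+2)}{4}\delta^{\frac{4}{n-2}}\partial_t\delta$, and $c(n)\bar\alpha_\ell^{\frac{4}{n-2}}K_\ell=c(n)R_0=\frac{n(n-2)}{4}$. Set $\Theta:=\langle\partial_{t_i}\delta_{P_i,t_i},\delta_{P_j,t_j}\rangle$. Then each interaction term equals exactly $\pm\bar\alpha_i\bar\alpha_j\Theta$: the $\partial_{t_i}\varepsilon_{12}$ term with sign $+$, and the $\delta_{P_j,t_j}^{\frac{n+2}{n-2}}$ term and the cross term each with sign $-$.

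By \eqref{A2-2}, $\bar\alpha_i\bar\alpha_j\Theta=-t_i^{-1}B(t_1t_2)^{-\frac{n-2}{2}}(1+O((t_1t_2)^{-2}\ln(t_1t_2)))$. So the two pieces you pair, the cross term and the $\partial_{t_i}\varepsilon_{12}$ term, cancel, and $B$ comes from the third piece. Equivalently, $\partial_{t_i}\varepsilon_{12}$ cancels the $\delta_{P_j,t_j}^{\frac{n+2}{n-2}}$ term, because $\bar\alpha_j\delta_{P_j,t_j}$ solves the equation with $K\equiv K_j$, and $B$ is the cross term. With these identities no delicate cancellation remains.

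The only remaining work is the $K-K_\ell$ corrections. The paper (Lemma \ref{A4}) bounds them directly using $|\partial_{t_i}\delta_{P_i,t_i}|\le Ct_i^{-1}\delta_{P_i,t_i}$ and the single estimate \eqref{R2-P}. This gives relative errors $O(t_\ell^{-2})$, or $O(t_\ell^{-2}\ln t_\ell)$ when $n=4$. Do this rather than differentiating in $t_i$ an expansion with an $O(\cdot)$ remainder, such as the one in Lemma \ref{A17}; that step is not justified.

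\textbf{For the remainder, the hemisphere split is unnecessary and uses the wrong interface.} In stereographic coordinates centred at $P_j$, the set $\{\delta_{P_i,t_i}=\delta_{P_j,t_j}\}$ is exactly $\{|y|^2=t_i/t_j\}$. This is the equator only when $t_1=t_2$, but the proposition is uniform in $t_1,t_2>C$. On a full hemisphere you would therefore apply Taylor bounds tailored to the local bubble in a region where the other bubble dominates. You would have to split along $\{\delta_{P_i,t_i}\gtrless\delta_{P_j,t_j}\}$ instead.

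The paper avoids this with the single global inequality \eqref{IneqX}. With $p=\frac{n+2}{n-2}$, $a=\bar\alpha_i\delta_{P_i,t_i}$ and $b=\bar\alpha_j\delta_{P_j,t_j}$, it states $|(a+b)^p-a^p-b^p-pa^{p-1}b|\le Ca^{\frac{2}{n-2}}b^{\frac{n}{n-2}}$. Combined with $|\partial_{t_i}\delta_{P_i,t_i}|\le Ct_i^{-1}\delta_{P_i,t_i}$ and Lemma \ref{L2} (with both exponents equal to $\frac{n}{n-2}$), this bounds the remainder by $O(t_i^{-1}(t_1t_2)^{-\frac n2}\ln(t_1t_2))$ for all $n\ge4$.
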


\begin{proof}
A direct computation gives that
\begin{align*}
\bar{H}_{i}(t)
&=-c(n)\bar{\alpha}_{i}^{\frac{2n}{n-2}}\int_{\SSphere^{n}}K\delta_{P_{i},t_{i}}^{\frac{n+2}{n-2}}\frac{\partial \delta_{P_{i},t_{i}}}{\partial t_{i}}+\bar{\alpha}_{i}\bar{\alpha}_{j}\frac{\partial}{\partial t_{i}}\langle \delta_{P_{i},t_{i}},\delta_{P_{j},t_{j}}\rangle\\
&\qquad-c(n)\bar{\alpha}_{i}\bar{\alpha}_{j}^{\frac{n+2}{n-2}}\int_{\SSphere^{n}}K\delta_{P_{j},t_{j}}^{\frac{n+2}{n-2}}\frac{\partial \delta_{P_{i},t_{i}}}{\partial t_{i}}-\frac{n+2}{n-2}c(n)\bar{\alpha}_{i}^{\frac{n+2}{n-2}}\bar{\alpha}_{j}\int_{\SSphere^{n}}K\delta_{P_{i},t_{i}}^{\frac{4}{n-2}}\delta_{P_{j},t_{j}}\frac{\partial \delta_{P_{i},t_{i}}}{\partial t_{i}}\\
&\qquad -c(n)\bar{\alpha}_{i}\int_{\SSphere^{n}}K\Big((\sum\limits_{l=1}^{2}\bar{\alpha}_{l}\delta_{P_{l},t_{l}})^{\frac{n+2}{n-2}}-\sum\limits_{l=1}^{2}(\bar{\alpha}_{l}\delta_{P_{l},t_{l}})^{\frac{n+2}{n-2}}\\
    &\qquad\qquad\qquad -\frac{n+2}{n-2}(\bar{\alpha}_{i}\delta_{P_{i},t_{i}})^{\frac{4}{n-2}}(\bar{\alpha}_{j}\delta_{P_{j},t_{j}})\Big)\frac{\partial \delta_{P_{i},t_{i}}}{\partial t_{i}}.
\end{align*}
Then (\ref{mocha}) follows from the above equality, (\ref{A2-2}) in Lemma \ref{A2}, (\ref{A3-1}) in Lemma \ref{A3}, (\ref{A4-1}) and (\ref{A4-2}) in Lemma \ref{A4}, (\ref{A16-1}) in Lemma \ref{A16}.
\end{proof}

Let
\begin{equation*}
\hat{R}_{i}(t):=t_{i}(H_{i}(t)-\bar{H}_{i}(t)).
\end{equation*}
The following proposition gives the estimate of $\hat{R}_{i}(t)$.
\begin{prop}\label{youyue--}
There exists a constant $C=C(n,K)>0$ depending on $n$ and $K$ such that if $t_{1}$, $t_{2}>C$, we have that
\begin{equation}\label{mocha-}
|\hat{R}_{i}(t)|\leq C(n,K)\begin{cases}
(t_{1}t_{2})^{-(n-2)}+\sum\limits_{l=1}^{2}t_{l}^{-2(n-2)},&n=4,5,\\
(t_{1}t_{2})^{-4}(\ln(t_{1}t_{2}))^{\frac{4}{3}}+\sum\limits_{l=1}^{2}t_{l}^{-8}(\ln t_{l})^{\frac{4}{3}},&n=6,\\
(t_{1}t_{2})^{-\frac{n+2}{2}}(\ln(t_{1}t_{2}))^{\frac{n+2}{n}}+\sum\limits_{l=1}^{2}t_{l}^{-(n+2)},&n\geq 7.
\end{cases}
\end{equation}
\end{prop}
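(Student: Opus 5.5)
Write $\epsilon(t)$ for the right-hand side of (\ref{yujia}); the claimed bound in (\ref{mocha-}) is then $C\epsilon(t)^2$ in each case. The plan is to show that every term of $H_i-\bar H_i$ is, after multiplication by $t_i$, either quadratic in $(\alpha-\bar\alpha,v)$ or a product of such a quantity with $\epsilon(t)$.

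Since $\frac{\partial\delta_{P_i,t_i}}{\partial t_i}$ has norm of order $t_i^{-1}$, the natural normalization is to multiply by $t_i$. Write $u=\bar u+w$ with $\bar u=\sum_l\bar\alpha_l\delta_{P_l,t_l}$ and $w=\sum_l(\alpha_l-\bar\alpha_l)\delta_{P_l,t_l}+v$, so that $\|w\|\le C\epsilon(t)$ by Proposition \ref{infinite}. We then split $t_i(H_i-\bar H_i)$ into the following pieces.

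\textbf{(a) Terms from $\alpha_i\alpha_j-\bar\alpha_i\bar\alpha_j$ against $t_i\partial_{t_i}\langle\delta_{P_i,t_i},\delta_{P_j,t_j}\rangle$.} The interaction derivative is $O((t_1t_2)^{-\frac{n-2}{2}})\le C\epsilon(t)$ by Lemma \ref{A2}. The coefficient is $O(\epsilon)$, so the product is $O(\epsilon^2)$.

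\textbf{(b) The nonlinear integral.} We expand
\[
|u|^{\frac4{n-2}}u=\bar u^{\frac{n+2}{n-2}}+\tfrac{n+2}{n-2}\bar u^{\frac4{n-2}}w+N(w),
\]
where $|N(w)|\le C(\bar u^{\frac{6-n}{n-2}}w^2+|w|^{\frac{n+2}{n-2}})$ for $n\le6$, and $|N(w)|\le C|w|^{\frac{n+2}{n-2}}$ for $n\ge 7$. The zeroth-order piece, multiplied by $\alpha_i$ versus $\bar\alpha_i$, yields $(\alpha_i-\bar\alpha_i)\,t_i\int K\bar u^{\frac{n+2}{n-2}}\partial_{t_i}\delta$. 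By Proposition \ref{youyue} and the identity $\int\delta^{\frac{n+2}{n-2}}\partial_t\delta=0$, the latter integral is $O(\epsilon)$, so this piece is $O(\epsilon^2)$.

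The linear piece $\int K\bar u^{\frac4{n-2}}w\,t_i\partial_{t_i}\delta_{P_i,t_i}$ is the delicate one. For $v$ we use $v\in E_t$, i.e. $\langle v,\partial_t\delta\rangle=0$, which is equivalent to $\int\delta^{\frac4{n-2}}v\,\partial_t\delta=0$. Writing $K\bar u^{\frac4{n-2}}=R_0\delta_{P_i,t_i}^{\frac4{n-2}}+[\ldots]$, the bracket is controlled by Lemmas \ref{V1} and \ref{KE1}, giving $o(1)\|v\|$. This smallness alone is not enough: we need $O(\epsilon)\|v\|$. For that we use quantitative versions of those lemmas. The $(K-K_i)$ contribution is bounded by $O(t_i^{-1}+\ldots)$ through a weighted Hölder estimate, and the cross term is bounded by $O((t_1t_2)^{-\frac{n-2}{2}}\cdot\log)$.

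The $\alpha-\bar\alpha$ part of $w$ gives $\int\delta^{\frac{n+2}{n-2}}\partial_t\delta=0$ plus the same errors. The quadratic remainder $N(w)$ contributes $O(\|w\|^2+\|w\|^{\frac{n+2}{n-2}})$, which is $O(\epsilon^2)$ when $n\le6$. When $n\ge7$ the exponent $\frac{n+2}{n-2}<2$, so we must instead use the sharper pointwise form $|N(w)|\le C\bar u^{\frac{6-n}{n-2}}w^2$ on the region $\{|w|\le\bar u\}$ and handle the complementary region separately, as in Lemma \ref{lidehuang}.

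\textbf{(c) The term $t_i\int K|u|^{\frac4{n-2}}u\,(1-\Pi_t)\partial_{t_i}v$.} Differentiating the orthogonality relations $\langle v,\delta\rangle=\langle v,\partial_t\delta\rangle=0$ shows that $(1-\Pi_t)\partial_{t_i}v$ lies in the span of $\delta_{P_l,t_l}$ and $\partial_{t_l}\delta_{P_l,t_l}$, with coefficients $O(t_i^{-1}\|v\|)$; after multiplication by $t_i$ these coefficients are $O(\|v\|)$. Pairing with $K|u|^{\frac4{n-2}}u$ then gives the following.
\begin{itemize}
\item On the $\partial_t\delta$ directions, the integral is $O(\epsilon)$ by the estimate in (b).
\item On the $\delta$ directions, the integral is $\int K|u|^{\frac4{n-2}}u\,\delta_{P_l,t_l}$, which by (\ref{F1def})--(\ref{F2def}) equals $c_0\alpha_l+\alpha_j\langle\delta,\delta\rangle$. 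This need not be small, so we must check that the $\delta$-coefficient actually vanishes: differentiating $\langle v,\delta_{P_l,t_l}\rangle=0$ in $t_i$ gives $\langle\partial_{t_i}v,\delta_{P_l,t_l}\rangle=-\langle v,\partial_{t_i}\delta_{P_l,t_l}\rangle=0$ by orthogonality.
\end{itemize}
The vanishing follows once we also verify that $\{\delta_{P_l,t_l},\partial_{t_l}\delta_{P_l,t_l}\}$ is almost orthogonal; the small cross inner products only produce $O(\epsilon)$ corrections. Hence (c) is $O(\|v\|\epsilon)=O(\epsilon^2)$.

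\textbf{Main obstacle.} The hardest step is the linear term in (b). There we must upgrade the $o(1)$ bounds of Lemmas \ref{V1} and \ref{KE1} to bounds of size $O(\epsilon(t))$ with the correct logarithmic powers in the cases $n=6$ and $n\ge7$, so that the result matches (\ref{mocha-}). The key point is $t_i^{-1}\le C\epsilon$, which requires $\mu_i$-type decay; the expected source of the $t_i$ part is the $\sum t_l^{-(n+2)/2}$ term in $\epsilon$.
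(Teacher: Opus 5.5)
Your decomposition matches the paper's: (a) the interaction term, (b) the difference of the nonlinear integrals tested against $\partial_{t_i}\delta_{P_i,t_i}$ (the paper's \eqref{linyang} and \eqref{lunyang}), and (c) the $(1-\Pi_t)\partial_{t_i}v$ term, handled through \eqref{zhongchangjiang}, \eqref{high} and \eqref{helin}. The gap is in the step you yourself single out.

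You bound the $(K-K_i)$ part of the linear term by $O(t_i^{-1})\|v\|$ and then invoke ``$t_i^{-1}\le C\epsilon(t)$''. That inequality is false: for $n=4$ and $t_1=t_2=t$ one has $\epsilon(t)\sim t^{-2}$. Your route would therefore only give $|\hat R_i(t)|\lesssim t_i^{-1}\epsilon(t)$, which is far from \eqref{mocha-}. No $\mu_i$-type decay can repair this, since $\epsilon(t)$ does not involve $\mu_i$.

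The missing idea is that the smallness comes from the $(n-2)$-flatness of $K$ at $P_i$, weighted against the bubble. Since $|t_i\partial_{t_i}\delta_{P_i,t_i}|\le C\delta_{P_i,t_i}$, the term is at most
$C\int_{\SSphere^n}|K-K_i|\delta_{P_i,t_i}^{\frac{n+2}{n-2}}|v|\le C\|v\|\,\|(K-K_i)\delta_{P_i,t_i}^{\frac{n+2}{n-2}}\|_{L^{2n/(n+2)}}$.
The bound $|K-K_i|\le C\,\mathrm{dist}(\cdot,P_i)^{n-2}$ then makes this norm
$O(t_i^{-(n-2)})$ for $n=4,5$, $O(t_i^{-4}(\ln t_i)^{2/3})$ for $n=6$, and $O(t_i^{-(n+2)/2})$ for $n\ge7$.
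This is exactly \eqref{KE1-2}, and it reproduces exactly the single-$t_l$ terms of $\epsilon(t)$.

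The cross term is likewise \eqref{V1-2}. For $n\ge6$ that bound is $(t_1t_2)^{-\frac{n+2}{4}}(\ln(t_1t_2))^{\frac{n+2}{2n}}$, not the $(t_1t_2)^{-\frac{n-2}{2}}$-with-log you state; it is still $O(\epsilon(t))$.

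So nothing needs to be ``upgraded''. You conflated two kinds of estimates:
the linear estimates \eqref{V1-2} and \eqref{KE1-2}, which are already $O(\epsilon(t))\|v\|$;
and the quadratic-form estimates \eqref{V1-1} and \eqref{KE1-1}, which are the ones that are merely $o(1)$.

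A minor point in (c): the $\delta_{P_l,t_l}$-coefficients of $(1-\Pi_t)\partial_{t_i}v$ do not vanish. By \eqref{zhongchangjiang} and \eqref{high} they are $O(t_i^{-1}(t_1t_2)^{-\frac{n-2}{2}}\|v\|)$. After multiplying by $t_i$ and pairing with the $O(1)$ quantity in \eqref{helin}, this gives $O(\epsilon(t)\|v\|)$. Your conclusion there stands once ``vanishes'' is replaced by this size estimate.
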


\begin{proof}
A direct computation gives that
\begin{align*}
H_{i}(t)-\bar{H}_{i}(t)
&=(\alpha_{i}\alpha_{j}-\bar{\alpha}_{i}\bar{\alpha}_{j})\frac{\partial}{\partial t_{i}}\langle \delta_{P_{i},t_{i}},\delta_{P_{j},t_{j}}\rangle-c(n)(\alpha_{i}-\bar{\alpha}_{i})\int_{\SSphere^{n}}K|u|^{\frac{4}{n-2}}u\frac{\partial \delta_{P_{i},t_{i}}}{\partial t_{i}}\\
&\qquad-c(n)\bar{\alpha}_{i}\int_{\SSphere^{n}}K\Big[|u|^{\frac{4}{n-2}}u-\Big(\sum\limits_{l=1}^{2}\bar{\alpha}_{l}\delta_{P_{l},t_{l}}\Big)^{\frac{n+2}{n-2}}\Big]\frac{\partial \delta_{P_{i},t_{i}}}{\partial t_{i}}\\
&\qquad-c(n)\int_{\SSphere^{n}}K|u|^{\frac{4}{n-2}}u(1-\Pi_{t})(\frac{\partial v}{\partial t_{i}}).
\end{align*}
Then (\ref{mocha-}) follows from the above identity, (\ref{yujia}) in Proposition \ref{infinite}, (\ref{A2-2}) in Lemma \ref{A2}, (\ref{linyang}), (\ref{helin}), (\ref{lunyang}) in Lemma \ref{liutengfei}, (\ref{zhongchangjiang}), (\ref{high}) in Lemma \ref{zhognshizhang}.
\end{proof}

Now we can prove our main theorem.

\begin{proof}[Proof of Theorems \ref{main thm}]
We only give the proof of part (a). That of part (b) is analogous.

Let $\nu=(K_{1},K_{2},-a_{1},-a_{2})\in\RR^{4}_{+} \setminus \Sigma^0$ be a point near $\bar \nu \in \Sigma^0$ and consider the system \eqref{sysH}. By Propositions \ref{youyue} and \ref{youyue--}, system (\ref{sysH}) can be equivalently written as
\begin{eqnarray}\label{fangchengzu2}
\left(\begin{array}{cc}
A_{1}&B\\
B&A_{2}
\end{array}\right)\left(\begin{array}{c}
t_{2}^{\frac{n-2}{2}}\\
t_{1}^{\frac{n-2}{2}}
\end{array}\right)=-\left(\begin{array}{c}
t_{1}^{-(\mu_{1}-(n-2))}t_{2}^{\frac{n-2}{2}}(D_{1}+t_{1}^{\mu_{1}}R_{1}(t))\\
t_{2}^{-(\mu_{2}-(n-2))}t_{1}^{\frac{n-2}{2}}(D_{2}+t_{2}^{\mu_{2}}R_{2}(t))
\end{array}\right),
\end{eqnarray}
where $R_{i}(t):=\tilde{R}_{i}(t)+\hat{R}_{i}(t)$ and $A_{i}$, $B$, $D_{i}$ are defined as in (\ref{defA})-(\ref{defD}) for $i=1$, $2$. Note that since $\nu \notin \Sigma^0$,
\[
\delta = \delta_\nu := A_1 A_2 - B^2 \neq 0.
\]
Moreover, $\delta > 0$ if $\nu \in \Sigma^+$ and $\delta < 0$ if $\nu \in \Sigma^-$.

By exchanging the role of the north and the south pole, we may assume without loss of generality that $\mu_1 \leq \mu_2$. We now make a change of variables $\beta = \delta^{-1} t_1^{-(\mu_1 - (n-2))}$, $s = (t_{2}/t_{1})^{(n-2)/2}$ and rewrite \eqref{fangchengzu2} in terms of $\beta$ and $s$. First, we rewrite \eqref{fangchengzu2} in terms of $s$ and $t_1$:
\[
\begin{pmatrix}A_1 & B\\B & A_2\end{pmatrix}
\begin{pmatrix}s\\1\end{pmatrix}
=\begin{pmatrix}t_{1}^{-(\mu_{1}-(n-2))}s(D_{1}+ \phi_1(t_{1},s)\\
s^{-\frac{2(\mu_{2}-(n-2))}{n-2}}t_{1}^{-(\mu_{2}-(n-2))}(D_{2}+t_{2}^{\mu_{2}}R_{2}(t))\end{pmatrix},
\]
where $\phi_i(t_1,s) = t_i^{\mu_i}R_i(t_1,s^{2/(n-2)}t_1)$. Performing Gaussian elimination and noting that $\delta =A_{1}A_{2}-B^{2}\neq0$, we obtain
\begin{align}\label{zouxiyu}
&\left(\begin{array}{cc}
A_{1}&B\\
0&1
\end{array}\right)\left(\begin{array}{c}
s\\
1
\end{array}\right)\nonumber\\
&\quad=-\left(\begin{array}{c}
\delta \beta  s (D_{1}+\hat \phi_{1}(\delta,\beta,s))\\
-B \beta s(D_{1}+ \hat \phi_{1}(\delta,\beta,s))+A_{1}s^{-\frac{2(\mu_{2}-(n-2))}{n-2}} \beta (\delta \beta)^{\frac{\mu_2 - \mu_1}{\mu_1 - (n-2)}}(D_{2}+\hat \phi_{2}(\delta,\beta,s))
\end{array}\right),
\end{align}
where $\hat \phi_i(\delta,\beta,s) = \phi_i((\delta \beta)^{-\frac{1}{\mu_1 - (n-2)}},s)$. This is more conveniently written as
\begin{equation}\label{baorui}
\Phi_{\nu}(\beta,s):=(\Phi_{\nu}^{1}(\beta,s),\Phi_{\nu}^{2}(\beta,s))=(0,0),
\end{equation}
where
\begin{align*}
\Phi_{\nu}^{1}(\beta,s)
    &:=A_{1}s+B+\beta s\delta(D_{1}+\hat{\phi}_{1}(\delta,\beta,s)),\\
\Phi_{\nu}^{2}(\beta,s)
    &:=1-B\beta s\left((D_{1}+\hat{\phi}_{1}(\delta,\beta,s))-\frac{A_{1}}{B}s^{-\frac{2\mu_2}{n-2}+1} (\delta \beta)^{\frac{\mu_2 - \mu_1}{\mu_1 - (n-2)}}(D_{2}+\hat{\phi}_{2}(\delta,\beta,s))\right).
\end{align*}
Note that, by the estimates for $\tilde R_i$ and $\hat R_i$ in Propositions \ref{youyue} and \ref{youyue--}, whenever $\beta, s \leq M$ for some fixed $M$, 
\[
|\hat\phi_i(\delta,\beta,s)| \le o(1) \text{ as } \nu \rightarrow \bar \nu.
\]

We proceed depending on whether $\mu_1 = \mu_2$ or $\mu_1 < \mu_2$.

\medskip
\noindent
\underline{Case 1.} $\mu_{1}=\mu_{2}$, where we also have by hypothesis that $\bar \nu \in S_{\mu,b}^+$ (that is, $F_{\mu,b}(\bar \nu) > 0$).

We need to show that, when $\nu \in \Sigma^-$ (resp., $\nu \in \Sigma^+$) is sufficiently close to $\bar \nu$, \eqref{baorui} has a solution $(\beta,s)$ with $\beta < 0$ (resp., no solution $(\beta,s)$ with $\beta > 0$). (Recall that $\nu \in \Sigma^-$ is equivalent to $\delta < 0$, and $\nu \in \Sigma^+$ is equivalent to $\delta > 0$.)

Note that, in view of the fact that $F_{\mu,b}(\bar \nu) \neq 0$, the equation
\begin{equation}\label{kunming}
\Phi_{\bar{\nu}}(\bar{\beta},\bar{s})=(0,0)
\end{equation}
has a unique solution given by
\begin{equation*}
(\bar{\beta},\bar{s})=\Big((-\frac{\bar{A}_{1}}{\bar{B}^{2}})(\bar{D}_{1}+\bar{D_{2}}(-\frac{\bar{A}_{1}}{\bar{B}})^{\frac{2\mu_1}{n-2}})^{-1},-\frac{\bar{B}}{\bar{A}_{1}}\Big).
\end{equation*}
Moreover, since $F_{\mu,b}(\bar \nu) > 0$, $\bar \beta < 0$. A direct computation gives that the Jacobian of $\Phi_{\bar \nu}$ at $(\bar\beta,\bar s)$ is 
\begin{align*}
\bar{M}=\left(\begin{array}{cc}
0 &\bar{A}_{1}\\
-\bar{B}\bar{s}(\bar{D}_{1}+\bar{D_{2}}\bar{s}^{-\frac{2\mu_1}{n-2}})& -\bar{B}\bar{\beta}(\bar{D}_{1}-2(\frac{\mu_1}{n-2}-1)\bar{D_{2}}\bar{s}^{-\frac{2\mu_1}{n-2}})
\end{array}\right),
\end{align*}
and the invertibility of this matrix is equivalent to $F_{\mu,b}(\bar \nu) \neq 0$. If $\Phi_\nu$ was a $C^1$ perturbation of $\Phi_{\bar \nu}$, we could at this point invoke the implicit function theorem to obtain the existence of a solution to \eqref{baorui}. Since our estimates for the perturbation terms $\hat \phi_i$ are $C^0$ estimates, we argue via the Brouwer fixed point theorem as follows. For $|\nu-\bar{\nu}|\leq |\nu|/2$, by inserting (\ref{kunming}) into (\ref{baorui}), we can rewrite (\ref{baorui}) as \begin{equation}\label{mengsuxia}
\begin{pmatrix}
\beta\\s 
\end{pmatrix}= \begin{pmatrix}\bar\beta \\\bar s\end{pmatrix} + \bar M^{-1}\begin{pmatrix}
O(|s-\bar{s}|^{2})+ \textrm{error}\\
O(|s-\bar{s}|^{2}+|\beta-\bar{\beta}|^{2})+\textrm{error}
\end{pmatrix} := T_\nu\left(\begin{pmatrix}
\beta\\s 
\end{pmatrix}\right),
\end{equation}
where the error terms satisfy
\[
|\textrm{error}| \leq \varrho(|\nu - \bar \nu|)
\]
for some non-negative continuous increasing function $\varrho$ with $\varrho(0) = 0$. Thus, by choosing a constant $C>0$ sufficiently large, when $|\nu-\bar{\nu}|$ sufficiently small, the map $T_\nu$ in (\ref{mengsuxia}) defines a continuous map from $\bar B_{C\varrho(|\nu - \bar \nu|)}(\bar{\beta},\bar{s})$ to itself. By the Brouwer fixed point theorem, \eqref{baorui} has a solution in $\bar B_{C\varrho(|\nu - \bar \nu|)}(\bar{\beta},\bar{s})$. Since $\bar \beta < 0$, we have, after shrinking $|\nu - \bar \nu|$ if necessary, that $\beta < 0$. This gives the desired two-bump solution.

For the other part, suppose by contradiction that there exists a sequence $\{\nu_m\} \subset \Sigma^+$ converging to $\bar \nu$ such that equation (\ref{main equ}) with $K=K_{\nu_m}$ admits a positive solution $u_{m}$ satisfying $\max\limits_{\SSphere^{n}}u_{m}\rightarrow\infty$ as $m\rightarrow\infty$. Then $\{u_{m}\}$ must blow up at the north pole and the south pole. Consequently, equation \eqref{baorui} with $\nu = \nu_m$ has a solution $(\beta_m,s_m)$ with $\beta_m > 0$. By inspection, the equation $\Phi_{\nu_m}^1(\beta_m,s_m) = 0$ implies $s_m \rightarrow \bar s$, and this together with $\Phi_{\nu_m}^2(\beta_m,s_m) = 0$ implies $\beta_m \rightarrow \bar \beta$, which is a contradiction as $\bar \beta < 0$. This completes the proof in Case 1.

\medskip
\noindent
\underline{Case 2.} $\mu_{1}<\mu_{2}$, where we also have by hypothesis that $b_1 > 0$.

In this case, thanks to the fact that $b_1 > 0$, the unique solution of \eqref{kunming} is given by
\begin{equation*}
(\bar{\beta},\bar{s})=\Big(-\frac{\bar{A}_{1}}{\bar{B}^{2}\bar{D}_{1}},-\frac{\bar{B}}{\bar{A}_{1}}\Big).
\end{equation*}
Again, we also have $\bar \beta < 0$. The Jacobian of $\Phi_{\bar\nu}$ at $(\bar\beta,\bar s)$ is
\begin{align*}
\bar{M}=\left(\begin{array}{cc}
0 & \bar{A}_{1}\\
-\bar{B}\bar{D}_{1}\bar{s} & -\bar{B}\bar{D}_{1}\bar{\beta}
\end{array}\right),
\end{align*}
which is always invertible. The conclusion follows from the same argument as in Case 1. We skip the details.
\end{proof}

\appendix

\section{Appendix: Technical estimates}

We collect estimates for various integral identities and estimates that were needed in the body. The proofs of these estimates are elementary in nature and done by pulling back to $\RR^n$ via the stereographic projection
\begin{equation}\label{AA1}
x^{m}=\frac{2y^{m}}{1+|y|^{2}},~~x^{n+1}=\frac{1-|y|^{2}}{1+|y|^{2}}P_{2}^{n+1},~~~~m=1,\cdots,n,
\end{equation}
where one has
\begin{align}\delta_{P_{1},t_{1}}(x(y))
    &=\left(\frac{t_{1}^{-1}(1+|y|^{2})}{1+t_{1}^{-2}|y|^{2}}\right)^{\frac{n-2}{2}},~~~~\forall~y\in\RR^{n},
\\
\frac{\partial\delta_{P_{1},t_{1}}}{\partial t_{1}}(x(y))
    &=-\frac{n-2}{2}t_{1}^{-1}\delta_{P_{1},t_{1}}\frac{1-t_{1}^{-2}|y|^{2}}{1+t_{1}^{-2}|y|^{2}},~~~~\forall~y\in\RR^{n},\\
\label{AA2}
\delta_{P_{2},t_{2}}(x(y))
    &=\left(\frac{t_{2}(1+|y|^{2})}{1+t_{2}^{2}|y|^{2}}\right)^{\frac{n-2}{2}},~~~~\forall~y\in\RR^{n},\\
\frac{\partial\delta_{P_{2},t_{2}}}{\partial t_{2}}(x(y))
    &=\frac{n-2}{2}t_{2}^{-1}\delta_{P_{2},t_{2}}\frac{1-t_{2}^{2}|y|^{2}}{1+t_{2}^{2}|y|^{2}},~~~~\forall~y\in\RR^{n}.\nonumber
\label{AA3}
\end{align}

Throughout the appendix, we assume that $K$ satisfies \eqref{higher0X}. This is equivalent to the condition that
\begin{align*}
K(r)
    &=K_{2}+2^{n-2}a_{2}r^{n-2}+2^{\mu_{2}}b_{2}r^{\mu_{2}}+\bar{R}_{2}(r),~~~~as~r:=|y|\rightarrow0^{+},\\
K(r)
    &=K_{1}+2^{n-2}a_{1}r^{-(n-2)}+2^{\mu_{1}}b_{1}r^{-\mu_{1}}+\bar{R}_{1}(r),~~~~as~r:=|y|\rightarrow \infty,
\end{align*}
with the error terms being controlled up to the first order derivatives. All implicit constants in subsequent estimates depend only on $n$, and an upper bound for $K_i$, $|a_i|$, $|b_i|$ and the function $\omega$ in \eqref{higher0X}. 

As the proofs are elementary, at places we will indicate only the key points rather than full details.

\subsection{Bubble interaction estimates}

\begin{lem}\label{A1}
\begin{equation}\label{AB1-}
c_0 = \langle\delta_{P_{i},t_{i}},\delta_{P_{i},t_{i}}\rangle=2^{n-2}n(n-2)|\SSphere^{n-1}|\int_{0}^{\infty}\frac{r^{n-1}dr}{(1+r^{2})^{n}},
\end{equation}
and
\begin{equation}\label{AB1}
\left\langle\frac{\partial\delta_{P_{i},t_{i}}}{\partial t_{i}},\frac{\partial\delta_{P_{i},t_{i}}}{\partial t_{i}}\right\rangle=\frac{2^{n-4}n(n-2)^2(n+2)}{n+1}|\SSphere^{n-1}|\int_{0}^{\infty}\frac{r^{n-1}dr}{(1+r^{2})^{n}}t_{i}^{-2}.
\end{equation}
\end{lem}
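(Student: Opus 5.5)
The plan is to use the fact that each bubble solves the limiting equation, which turns both inner products into weighted integrals of powers of $\delta_{P_i,t_i}$. These integrals are then computed on $\RR^n$ via the stereographic projection \eqref{AA1} and reduced to Beta functions. By the symmetry between the poles it suffices to treat $i=2$, using \eqref{AA2} and the formula for $\partial\delta_{P_2,t_2}/\partial t_2$ listed after it.

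\textbf{The first identity.} Since $\delta_{P,t}$ solves \eqref{main equ} with $K\equiv R_0$, we have $-L_{g_0}\delta_{P,t}=c(n)R_0\,\delta_{P,t}^{\frac{n+2}{n-2}}$. By the definition of the inner product,
\[
c_0=c(n)R_0\int_{\SSphere^n}\delta_{P_2,t_2}^{\frac{2n}{n-2}},\qquad c(n)R_0=\frac{n(n-2)}{4}.
\]
Pulling back with the volume element $\bigl(\tfrac{2}{1+|y|^2}\bigr)^n dy$, the integrand becomes $2^n t_2^n(1+t_2^2|y|^2)^{-n}$. The change of variables $y\mapsto y/t_2$ removes $t_2$, and polar coordinates then give
\[
\int_{\SSphere^n}\delta_{P_2,t_2}^{\frac{2n}{n-2}}=2^n|\SSphere^{n-1}|\,I,\qquad I:=\int_0^\infty\frac{r^{n-1}\,dr}{(1+r^2)^n}.
\]
Multiplying by $c(n)R_0$ gives $c_0=2^{n-2}n(n-2)|\SSphere^{n-1}|\,I$, which is \eqref{AB1-}.

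\textbf{The second identity.} Differentiating the bubble equation in $t$ gives
\[
-L_{g_0}\partial_t\delta=\frac{n(n+2)}{4}\,\delta^{\frac{4}{n-2}}\,\partial_t\delta .
\]
Hence
\[
\langle\partial_t\delta,\partial_t\delta\rangle=\frac{n(n+2)}{4}\int_{\SSphere^n}\delta^{\frac{4}{n-2}}(\partial_t\delta)^2 .
\]
Insert $\partial_{t_2}\delta_{P_2,t_2}=\frac{n-2}{2}t_2^{-1}\delta_{P_2,t_2}\frac{1-t_2^2|y|^2}{1+t_2^2|y|^2}$ and rescale exactly as before. This yields
\[
\langle\partial_{t_2}\delta,\partial_{t_2}\delta\rangle=2^{n-4}n(n+2)(n-2)^2|\SSphere^{n-1}|\,t_2^{-2}\,J,\qquad J:=\int_0^\infty\frac{r^{n-1}(1-r^2)^2}{(1+r^2)^{n+2}}\,dr .
\]

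\textbf{The Beta-function step.} It remains to show $J=I/(n+1)$, and this is the only step needing care. Use the substitution $\int_0^\infty r^{n-1+2k}(1+r^2)^{-m}\,dr=\tfrac12 B\bigl(\tfrac n2+k,\,m-\tfrac n2-k\bigr)$ and set $a=n/2$. Expanding $(1-r^2)^2$ gives
\[
J=\tfrac12\bigl[B(a,a+2)-2B(a+1,a+1)+B(a+2,a)\bigr],\qquad I=\tfrac12 B(a,a).
\]
Using $\Gamma(x+1)=x\Gamma(x)$, the needed ratios are
\[
\frac{B(a,a+2)}{B(a,a)}=\frac{a+1}{2(2a+1)},\qquad \frac{B(a+1,a+1)}{B(a,a)}=\frac{a}{2(2a+1)} .
\]
Therefore
\[
\frac{J}{I}=\frac{2(a+1)-2a}{2(2a+1)}=\frac{1}{n+1},
\]
and substituting this into the expression above gives \eqref{AB1}.
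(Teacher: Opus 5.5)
Your proposal is correct and follows essentially the same route as the paper. Both arguments use $-L_{g_0}\delta_{P,t}=\frac{n(n-2)}{4}\delta_{P,t}^{\frac{n+2}{n-2}}$ and its $t$-derivative to turn the inner products into integrals, pull back by stereographic projection and rescale, and reduce \eqref{AB1} to the identity $J=I/(n+1)$. The only difference is how that last identity is checked: you use Beta-function ratios, whereas the paper uses an integration-by-parts recursion together with the decomposition $\frac{(1-r^2)^2}{(1+r^2)^{n+2}}=\frac{1}{(1+r^2)^n}-\frac{4r^2}{(1+r^2)^{n+2}}$. Both computations are valid.
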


\begin{proof}
We will only consider $i = 1$. Recall that $-L_{g_0} \delta_{P,t} = \frac{n(n-2)}{4}\delta_{P,t}^{\frac{n+2}{n-2}}$ and $-L_{g_0} \frac{\partial\delta_{P,t}}{\partial t} = \frac{n(n+2)}{4}\delta_{P,t}^{\frac{4}{n-2}}\frac{\partial\delta_{P,t}}{\partial t}$. Therefore, 
\begin{align*}
\langle\delta_{P_{1},t_{1}},\delta_{P_{1},t_{1}}\rangle 
    &= \frac{n(n-2)}{4}\int_{\mathbb{S}^n} \delta_{P_1,t_1}^{\frac{2n}{n-2}}dv_{g_0}
        = 2^{n-2}n(n-2)\int_{\mathbb{R}^n} \frac{t_1^{-n}dy}{(1+ t_1^{-2} |y|^2)^n},\\
\left\langle\frac{\partial\delta_{P_{1},t_{1}}}{\partial t_{1}},\frac{\partial\delta_{P_{1},t_{1}}}{\partial t_{1}}\right\rangle
    &= \frac{n(n+2)}{4} \int_{\mathbb{S}^n} \delta_{P_1,t_1}^{\frac{4}{n-2}} \big(\frac{\partial\delta_{P_{1},t_{1}}}{\partial t_{1}}\big)^2 dv_{g_0}\\
    &= 2^{n-4} n(n-2)^2(n+2)\int_{\mathbb{R}^n} \frac{t_1^{-n-2}(1 - t_1^{-2}|y|^2)^2 dy}{(1 + t_1^{-2}|y|^2)^{n+2}}.
\end{align*}
Making the change of variables $z = t_1^{-1}y$ and then switching to spherical coordinates, \eqref{AB1-} follows immediately, while for \eqref{AB1} we need to show
\begin{equation}
\int_0^\infty \frac{(1-r^2)^2r^{n-1}}{(1+r^2)^{n+2}}dr = \frac{1}{n+1}\int_0^\infty \frac{r^{n-1}}{(1+r^2)^n}dr.
    \label{A1-P1}
\end{equation}
To see this, we use the integration-by-parts identity
\begin{equation*}
\int_0^\infty  \frac{r^{a+1}}{(1+r^2)^{b+1}}dr
    = -\frac{1}{2b} \int_0^\infty  r^a \frac{d}{dr}\Big(\frac{1}{(1+r^2)^b}\Big)dr
    = \frac{a}{2b} \int_0^\infty   \frac{r^{a-1}}{(1+r^2)^{b}}dr,
\end{equation*}
which holds for $2b > a > 0$. This implies
\begin{equation*}
\int_0^\infty  \frac{r^{a-1}}{(1+r^2)^{b+1}}dr
    = \int_0^\infty \Big(\frac{r^{a-1}}{(1+r^2)^{b}} - \frac{r^{a+1}}{(1+r^2)^{b+1}}\Big)dr
    = \frac{2b-a}{2b} \int_0^\infty   \frac{r^{a-1}}{(1+r^2)^{b}}dr.
\end{equation*}
In particular,
\[
\int_0^\infty  \frac{r^{n+1}}{(1+r^2)^{n+2}}dr
    =  \frac{n}{2(n+1)} \int_0^\infty   \frac{r^{n-1}}{(1+r^2)^{n+1}}dr
    =  \frac{n}{4(n+1)}  \int_0^\infty   \frac{r^{n-1}}{(1+r^2)^{n}}dr.
\]
The identity \eqref{A1-P1} follows from the above identiy and the algebraic identity
\[
  \frac{(1-r^2)^2r^{n-1}}{(1+r^2)^{n+2}}
    = \frac{r^{n-1}}{(1+r^2)^{n}} - \frac{4r^{n+1}}{(1 + r^2)^{n+2}}.
\]
The proof is complete
\end{proof}

\begin{lem}\label{A2}
If $t_{1}t_{2}\geq e$, we have for $i \neq j$ that
\begin{align}
\langle\delta_{P_{i},t_{i}},\delta_{P_{j},t_{j}}\rangle
    &= 2^{n-2}(n-2)|\SSphere^{n-1}|(t_{1}t_{2})^{-\frac{n-2}{2}}
        \big[1+O_{n}((t_{1}t_{2})^{-2}\ln(t_{1}t_{2}))\big],
    \label{A2-1}\\
\Big\langle\frac{\partial \delta_{P_{i},t_{i}}}{\partial t_{i}},\delta_{P_{j},t_{j}}\Big\rangle
&=- 2^{n-3} (n-2)^2|\SSphere^{n-1}|t_{i}^{-1}(t_{1}t_{2})^{-\frac{n-2}{2}}  \big[1+O_{n}((t_{1}t_{2})^{-2}\ln(t_{1}t_{2}))\big],
    \label{A2-2}\\
\Big\langle\frac{\partial \delta_{P_{i},t_{i}}}{\partial t_{i}},\frac{\partial \delta_{P_{j},t_{j}}}{\partial t_{j}}\Big\rangle
&= 2^{n-4} (n-2)^{3}|\SSphere^{n-1}|(t_{1}t_{2})^{-\frac{n}{2}}\big[1+O_{n}((t_{1}t_{2})^{-2}\ln(t_{1}t_{2}))\big].
    \label{A2-3}
\end{align}
\end{lem}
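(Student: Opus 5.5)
By symmetry under the reflection exchanging $P_1$ and $P_2$ (which maps $\delta_{P_1,t}$ to $\delta_{P_2,t}$), it suffices to treat $i=1$, $j=2$. The key observation is that all three quantities are derivatives of one function of the product $s:=t_1t_2$. Once I show $\langle\delta_{P_1,t_1},\delta_{P_2,t_2}\rangle=G(t_1t_2)$ with an explicit $G$, the other two identities follow by differentiation:
\[
\Big\langle\frac{\partial \delta_{P_1,t_1}}{\partial t_1},\delta_{P_2,t_2}\Big\rangle=t_1^{-1}\,sG'(s),\qquad
\Big\langle\frac{\partial \delta_{P_1,t_1}}{\partial t_1},\frac{\partial \delta_{P_2,t_2}}{\partial t_2}\Big\rangle=G'(s)+sG''(s).
\]
Here $\partial_{t_i}$ commutes with the (bilinear, $t$-independent) inner product.

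\emph{Step 1: formula for $G$.} As in Lemma \ref{A1}, I will use $-L_{g_0}\delta_{P_2,t_2}=\frac{n(n-2)}{4}\delta_{P_2,t_2}^{\frac{n+2}{n-2}}$, so that
\[
\langle\delta_{P_1,t_1},\delta_{P_2,t_2}\rangle=\frac{n(n-2)}{4}\int_{\SSphere^n}\delta_{P_2,t_2}^{\frac{n+2}{n-2}}\delta_{P_1,t_1}\,dv_{g_0}.
\]
I pull this back by \eqref{AA1}, with $dv_{g_0}=\big(\tfrac{2}{1+|y|^2}\big)^n dy$, and insert the formulas for $\delta_{P_1,t_1}$ and $\delta_{P_2,t_2}$. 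The powers of $(1+|y|^2)$ cancel exactly, since $\frac{n+2}{2}+\frac{n-2}{2}-n=0$. After the substitution $z=t_2y$ this yields
\[
G(s)=2^{n-2}(n-2)\,s^{-\frac{n-2}{2}}\cdot n\,I(s^{-2}),\qquad I(\varepsilon):=\int_{\RR^n}\frac{dz}{(1+|z|^2)^{\frac{n+2}{2}}(1+\varepsilon|z|^2)^{\frac{n-2}{2}}}.
\]
At $\varepsilon=0$ I use $\int_0^\infty r^{n-1}(1+r^2)^{-\frac{n+2}{2}}dr=\frac1n$ (via the substitution $r=\tan\phi$, or the integration-by-parts identity from Lemma \ref{A1}). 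This gives $nI(0)=|\SSphere^{n-1}|$ and hence the leading constant $2^{n-2}(n-2)|\SSphere^{n-1}|$.

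\emph{Step 2: error bounds for $I$ and its derivatives.} I will bound $1-(1+\varepsilon r^2)^{-\frac{n-2}{2}}\le C\min(\varepsilon r^2,1)$ and split the radial integral at $r=\varepsilon^{-1/2}$. This gives $|I(\varepsilon)-I(0)|\lesssim \varepsilon\int_1^{\varepsilon^{-1/2}}r^{-1}dr+\int_{\varepsilon^{-1/2}}^\infty r^{-3}dr\lesssim\varepsilon\ln(1/\varepsilon)$, i.e.\ $O(s^{-2}\ln s)$, which is \eqref{A2-1}. Differentiating under the integral, the same splitting gives $|I'(\varepsilon)|\lesssim\int r^{n+1}(1+r^2)^{-\frac{n+2}{2}}(1+\varepsilon r^2)^{-\frac n2}dr\lesssim\ln(1/\varepsilon)$ and $|I''(\varepsilon)|\lesssim\varepsilon^{-1}$. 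The hypothesis $s\ge e$ ensures the logarithms are positive and bounded below.

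\emph{Step 3: differentiate.} I write $G=c\,s^{-m}\tilde I(s)$ with $m=\frac{n-2}{2}$ and $\tilde I(s)=nI(s^{-2})/|\SSphere^{n-1}|$. The chain rule gives $\tilde I'=-2s^{-3}I'$ and $\tilde I''=6s^{-4}I'+4s^{-6}I''$ (up to the normalizing factor). By Step 2, $s\tilde I'=O(s^{-2}\ln s)$ and $s^2\tilde I''=O(s^{-2}\ln s)$.
\begin{itemize}
\item The leading term of $sG'$ is $-mc\,s^{-m}$, with constant $-\frac{n-2}{2}\cdot2^{n-2}(n-2)=-2^{n-3}(n-2)^2$; this is \eqref{A2-2}.
\item The leading term of $G'+sG''$ is $c(-m+m(m+1))s^{-m-1}=cm^2s^{-\frac n2}$, with constant $2^{n-4}(n-2)^3$; this is \eqref{A2-3}.
\end{itemize}
All remaining terms have relative size $O(s^{-2}\ln s)$.

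The only delicate step is Step 2, specifically the bound on $I''$. The bound $\varepsilon^{-1}$ must be sharp enough that, after the factor $s^{-6}$ from the chain rule, it still produces only a relative error $O(s^{-2})$. I would check this by the same radial splitting at $r=\varepsilon^{-1/2}$.
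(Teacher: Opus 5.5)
Your proof is correct, and it takes a genuinely different route from the paper's.

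\textbf{The paper's route.} The paper evaluates the three inner products independently. It uses $-L_{g_0}\delta_{P,t}=\frac{n(n-2)}{4}\delta_{P,t}^{\frac{n+2}{n-2}}$ and $-L_{g_0}\partial_t\delta_{P,t}=\frac{n(n+2)}{4}\delta_{P,t}^{\frac{4}{n-2}}\partial_t\delta_{P,t}$ to write each pairing as an explicit radial integral. The nonlinear power sits on the $P_1$ bubble, so after $z=t_2y$ the mass lies at $r\sim T$. The paper then replaces each integrand by a model profile in $r/T$, up to an error $O(\min\{1,r^{-1},T^{n+2}r^{-n-3}\})$. It computes three model integrals, $T^2/n$ and $\pm\frac{n-2}{n(n+2)}T^2$, and the $O(\ln T)$ error comes from the range $1\le r\le T$.

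\textbf{Your route.} You observe that the pairing is a single function $G(t_1t_2)$. You obtain its asymptotics as a perturbation, in the parameter $\varepsilon=s^{-2}$, of the fixed convergent integral $I(0)$. You then get (\ref{A2-2}) and (\ref{A2-3}) by differentiating, and the constants $-m$ and $m^2$ fall out algebraically.

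\textbf{The derivative bounds.} Differentiating an asymptotic expansion is not automatic, so you need separate bounds on the derivatives of $I$. Both check out with your splitting at $r=\varepsilon^{-1/2}$:
\begin{itemize}
\item $I'(\varepsilon)=O(\ln(1/\varepsilon))$;
\item $I''(\varepsilon)=O(\varepsilon^{-1})$.
\end{itemize}
The region $r\le 1$, which you omitted in bounding $I(\varepsilon)-I(0)$, contributes only $O(\varepsilon)$. Together these give exactly the required relative error $O(s^{-2}\ln s)$ in all three identities, since $s\tilde I'$ and $s^2\tilde I''$ are both $O(s^{-2}\ln s)$.

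\textbf{Comparison.} Your approach needs only one explicit integral, $\int_0^\infty r^{n-1}(1+r^2)^{-\frac{n+2}{2}}dr=\frac1n$. It also explains structurally where the factor $t_i^{-1}$ and the dependence on $t_1t_2$ alone come from. The paper's approach avoids any differentiation in the parameters and treats all three estimates with one uniform three-region splitting, at the cost of the extra $-L_{g_0}\partial_t\delta$ identity and two further explicit integrals.
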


\begin{proof}
We will only  consider $i = 1$ and $j = 2$. Let $T = t_1t_2$. Proceeding as in the proof of Lemma \ref{A1}, we find
\begin{align*}
\langle\delta_{P_{1},t_{1}},\delta_{P_{2},t_{2}}\rangle
    &= 2^{n-2}n(n-2) \int_{\mathbb{R}^n} \frac{t_1^{-\frac{n+2}{2}}t_2^{\frac{n-2}{2}}}{(1 + t_1^{-2}|y|^2)^{\frac{n+2}{2}}(1 + t_2^2|y|^2)^{\frac{n-2}{2}}}dy,
\\
\Big\langle\frac{\partial \delta_{P_{1},t_{1}}}{\partial t_{1}},\delta_{P_{2},t_{2}}\Big\rangle
&=- 2^{n-3} n (n+2)(n-2)\int_{\mathbb{R}^n}\frac{t_1^{-\frac{n+4}{2}}t_2^{\frac{n-2}{2}}(1 - t_1^{-2}|y|^2)}{(1 + t_1^{-2}|y|^2)^{\frac{n+4}{2}}(1 + t_2^2|y|^2)^{\frac{n-2}{2}}}dy,\\
\Big\langle\frac{\partial \delta_{P_{1},t_{1}}}{\partial t_{1}},\frac{\partial \delta_{P_{2},t_{2}}}{\partial t_{2}}\Big\rangle
&= - 2^{n-4} n(n+2)(n-2)^2 \int_{\mathbb{R}^n}\frac{t_1^{-\frac{n+4}{2}}t_2^{\frac{n-4}{2}}(1 - t_1^{-2}|y|^2)(1 - t_2^2|y|^2)}{(1 + t_1^{-2}|y|^2)^{\frac{n+4}{2}}(1 + t_2^2|y|^2)^{\frac{n}{2}}}dy.
\end{align*}
Making the change of variable $z = t_2 y$ and switching to spherical coordinates, we get
\begin{align}
\langle\delta_{P_{1},t_{1}},\delta_{P_{2},t_{2}}\rangle
    &= 2^{n-2}n(n-2)|\mathbb{S}^{n-1}| T^{-\frac{n+2}{2}} \int_0^\infty \frac{r^{n-1}dr}{(1 + T^{-2}r^2)^{\frac{n+2}{2}}(1 + r^2)^{\frac{n-2}{2}}},\label{A2-P1}
\\
\Big\langle\frac{\partial \delta_{P_{1},t_{1}}}{\partial t_{1}},\delta_{P_{2},t_{2}}\Big\rangle
&=- 2^{n-3} n(n+2)(n-2) |\mathbb{S}^{n-1}|t_1^{-1}T^{-\frac{n+2}{2}}\int_0^\infty\frac{r^{n-1}(1 - T^{-2} r^2)dr}{(1 + T^{-2}r^2)^{\frac{n+4}{2}}(1 + r^2)^{\frac{n-2}{2}}},\label{A2-P2}\\
\Big\langle\frac{\partial \delta_{P_{1},t_{1}}}{\partial t_{1}},\frac{\partial \delta_{P_{2},t_{2}}}{\partial t_{2}}\Big\rangle
&= - 2^{n-4} n(n+2)(n-2)^2 |\mathbb{S}^{n-1}|T^{-\frac{n+4}{2}} \int_{0}^\infty\frac{r^{n-1}(1 - T^{-2} r^2) (1 - r^2)dr}{(1 + T^{-2}r^2)^{\frac{n+4}{2}}(1 + r^2)^{\frac{n}{2}}}.\label{A2-P3}
\end{align}
Considering the cases $r \leq 1$, $1 \leq r \leq T$ and $r \geq T$, we observe that the integrands in \eqref{A2-P1}--\eqref{A2-P3} can be rewritten respectively as
\[
\frac{r}{(1 + T^{-2}r^2)^{\frac{n+2}{2}}} + \textrm{error},  \frac{r(1 - T^{-2} r^2)}{(1 + T^{-2}r^2)^{\frac{n+4}{2}}} + \textrm{error},  \text{ and }  - \frac{r (1 - T^{-2} r^2) }{(1 + T^{-2}r^2)^{\frac{n+4}{2}}} + \textrm{error}.
\]
where the error terms of size $O(\min\{1, r^{-1}, T^{n+2}r^{-n-3}\})$. Therefore, the integrals in \eqref{A2-P1}--\eqref{A2-P3} are respectively
\begin{align*}
\int_{0}^\infty \frac{r dr}{(1 + T^{-2}r^2)^{\frac{n+2}{2}}} + O(\ln T)
    &= \frac{1}{n}T^2 + O(\ln T), \\
\int_{0}^\infty \frac{r (1 - T^{-2} r^2) dr}{(1 + T^{-2}r^2)^{\frac{n+4}{2}}} + O(\ln T)
    &= \frac{n-2}{n(n+2)} T^2 + O(\ln T),\\
- \int_0^\infty \frac{r(1 - T^{-2} r^2) dr}{(1 + T^{-2}r^2)^{\frac{n+4}{2}}}+ O(\ln T)
    &= -\frac{n-2}{n(n+2)} T^2 + O(\ln T),
\end{align*}
where the $O(\ln T)$ comes from integrating the error terms in $[1,T]$. Returning to \eqref{A2-P1}--\eqref{A2-P3}, we obtain the conclusion.
\end{proof}

\begin{lem}\label{L2}
Let $\alpha$, $\beta>0$ satisfy $\alpha+\beta=\frac{2n}{n-2}$. If $t_{1}t_{2}\geq e$, we have for $i \neq j$ that
\begin{equation*}
\int_{\SSphere^{n}}\delta_{P_{i},t_{i}}^{\alpha}\delta_{P_{j},t_{j}}^{\beta}\leq C(n,\alpha,\beta)\begin{cases}
    (t_{1}t_{2})^{-\frac{\theta(n-2)}{2}} & \text{ if } \theta < \frac{n}{n-2},\\
    (t_1t_2)^{-\frac{n}{2}}\ln(t_{1}t_{2}) &\text{ if } \theta = \frac{n}{n-2},
    \end{cases}
\end{equation*}
where $\theta:=\min\{\alpha,\beta\}$.

\end{lem}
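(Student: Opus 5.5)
Lemma \ref{L2} will be proved by pulling back to $\RR^n$ through the stereographic projection \eqref{AA1}, as in Lemmas \ref{A1} and \ref{A2}. By symmetry of the roles of the poles, it suffices to treat $i=1$, $j=2$. The volume form pulls back as $dv_{g_0}=2^n(1+|y|^2)^{-n}dy$. Since $\alpha+\beta=\frac{2n}{n-2}$, the factors $(1+|y|^2)^{\frac{n-2}{2}(\alpha+\beta)}=(1+|y|^2)^n$ cancel this exactly. We obtain
\[
\int_{\SSphere^n}\delta_{P_1,t_1}^{\alpha}\delta_{P_2,t_2}^{\beta}
=2^n\int_{\RR^n}\frac{t_1^{-\frac{n-2}{2}\alpha}\,t_2^{\frac{n-2}{2}\beta}}{(1+t_1^{-2}|y|^2)^{\frac{n-2}{2}\alpha}(1+t_2^2|y|^2)^{\frac{n-2}{2}\beta}}\,dy .
\]
Substituting $z=t_2y$ and writing $T=t_1t_2$, $r=|z|$, the integral becomes
\[
2^n|\SSphere^{n-1}|\,T^{-\frac{n-2}{2}\alpha}\int_0^\infty\frac{r^{n-1}\,dr}{(1+T^{-2}r^2)^{\frac{n-2}{2}\alpha}(1+r^2)^{\frac{n-2}{2}\beta}},
\]
using $\frac{n-2}{2}(\alpha+\beta)=n$ to collect the powers of $t_2$.

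Next we split the radial integral into the ranges $r\le1$, $1\le r\le T$ and $r\ge T$, and bound each piece.
\begin{itemize}
\item On $r\le1$ the integrand is bounded by $r^{n-1}$, so this piece is $O(1)$.
\item On $1\le r\le T$ the integrand is comparable to $r^{n-1-(n-2)\beta}$.
\item On $r\ge T$ the integrand is comparable to $T^{(n-2)\alpha}r^{n-1-2n}=T^{(n-2)\alpha}r^{-n-1}$, which integrates to $O(T^{(n-2)\alpha-n})$.
\end{itemize}
The middle integral $\int_1^T r^{n-1-(n-2)\beta}dr$ is $O(1)$ if $(n-2)\beta>n$, is $\ln T$ if $(n-2)\beta=n$, and is $O(T^{n-(n-2)\beta})$ if $(n-2)\beta<n$.

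Multiplying back by $T^{-\frac{n-2}{2}\alpha}$, we collect the possible contributions:
\[
T^{-\frac{n-2}{2}\alpha},\qquad T^{-\frac{n-2}{2}\alpha}\ln T,\qquad T^{n-(n-2)\beta-\frac{n-2}{2}\alpha}=T^{-\frac{n-2}{2}\beta},\qquad T^{\frac{n-2}{2}\alpha-n}=T^{-\frac{n-2}{2}\beta}.
\]
The identity $n-(n-2)\beta-\frac{n-2}{2}\alpha=-\frac{n-2}{2}\beta$ again comes from $\frac{n-2}{2}(\alpha+\beta)=n$. Hence the whole integral is $O(T^{-\frac{n-2}{2}\alpha}+T^{-\frac{n-2}{2}\beta})$, except in the borderline case.

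It remains to reconcile this with $\theta=\min\{\alpha,\beta\}$. The largest term is $T^{-\frac{n-2}{2}\theta}$, which is the claimed bound when $\theta<\frac{n}{n-2}$. The only genuine subtlety is the borderline $\theta=\frac{n}{n-2}$, which forces $\alpha=\beta=\frac{n}{n-2}$. In that case the middle range yields exactly the logarithm, giving $T^{-\frac n2}\ln T$. The condition $T\ge e$ guarantees $\ln T\ge1$, so the $O(1)$ and $O(T^{-n/2})$ terms are absorbed into this bound.

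One point needs care: in the case $(n-2)\beta>n$ we have $\beta>\frac{n}{n-2}$, hence $\alpha<\frac{n}{n-2}<\beta$. Then the dominant term $T^{-\frac{n-2}{2}\alpha}$ is indeed $T^{-\frac{n-2}{2}\theta}$, and no logarithm appears. This case bookkeeping is the only step needing attention; the rest is routine.
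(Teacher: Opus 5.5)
Your proof is correct and follows essentially the same route as the paper: pull back by stereographic projection to a radial integral that depends only on $T=t_1t_2$, split it at $r=1$ and $r=T$, and bound each piece. The differences are cosmetic. You rescale by $z=t_2y$, and the paper's radial integral is the image of yours under the inversion $r\mapsto T/r$. You also track both orderings of $\alpha,\beta$ rather than assuming $\alpha\le\beta$ without loss of generality.
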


\begin{proof} We will only consider $i = 1$ and $j = 2$ and $\alpha \leq \beta$, so that $\theta = \alpha$. Proceeding as in the start of the proof of Lemma \ref{A2}, we find
\begin{align*}
\int_{\SSphere^{n}}\delta_{P_{1},t_{1}}^{\alpha}\delta_{P_{2},t_{2}}^{\beta}
	&= 2^n |\mathbb{S}^{n-1}| T^{-\frac{(n-2)\beta}{2}}\int_0^\infty \frac{  r^{n-1}}{(1 +  r^2)^{\frac{(n-2)\alpha}{2}}(1 + T^{-2}  r^2)^{\frac{(n-2)\beta}{2}}}\, dr.
\end{align*}
The integrand is 
\[
\begin{cases}
    O(1) & \text{ for } r \leq 1,\\
    O(r^{n-1-(n-2)\alpha}) & \text{ for }1 < r < T,\\
    O(T^{(n-2)\beta}r^{-n-1}) & \text{ for } r \geq T.
\end{cases}
\]
The conclusion follows.
\end{proof}

\begin{lem}\label{R1}
For $t_i \geq 1$,
\begin{equation}
\int_{\mathbb{S}^n} K \delta_{P_i,t_i}^{\frac{2n}{n-2}}
    = \frac{4c_0 K_i}{n(n-2)} + \frac{2^{2n-3}}{n-1} |\mathbb{S}^{n-1}| a_i t_i^{-(n-2)}    
        + O(1) t_i^{-\mu}.
        \label{R1-1}
\end{equation}
\end{lem}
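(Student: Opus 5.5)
Take $i=1$; the case $i=2$ is symmetric. The plan is to pull the integral back to $\RR^n$ by the stereographic projection \eqref{AA1}, rescale, and then expand $K$ near $P_1$ using its asymptotic form as $r\to\infty$.

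\emph{Reduction.} With the volume factor $\big(\frac{2}{1+|y|^2}\big)^n$ and \eqref{AA1}--\eqref{AA2},
\[
\int_{\SSphere^n} K\delta_{P_1,t_1}^{\frac{2n}{n-2}}=2^n\int_{\RR^n}K(y)\frac{t_1^{-n}\,dy}{(1+t_1^{-2}|y|^2)^n}.
\]
Substituting $z=t_1^{-1}y$ gives
\[
2^n\int_{\RR^n}K(t_1z)\,\frac{dz}{(1+|z|^2)^n}.
\]
The mass is concentrated where $|z|\sim1$, so $|y|\sim t_1\to\infty$, which is the regime near $P_1$.

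\emph{Splitting.} Write
\[
K(t_1z)=K_1+2^{n-2}a_1(t_1|z|)^{-(n-2)}+E(t_1z).
\]
The remainder $E$ is controlled as follows:
\begin{itemize}
\item For $|y|\ge1$, \eqref{higher0X} gives $|E(y)|\le C|y|^{-\mu_1}$, since the $b_1$ term and $\bar R_1$ are both $O(|y|^{-\mu_1})$.
\item For $|y|\le1$, $K$ is bounded while $|y|^{-(n-2)}$ blows up, so there $|E(y)|\le C|y|^{-(n-2)}$.
\end{itemize}

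\emph{The three terms.} The $K_1$ term gives $2^nK_1|\SSphere^{n-1}|\int_0^\infty\frac{r^{n-1}dr}{(1+r^2)^n}$. By \eqref{AB1-} this equals $\frac{4c_0K_1}{n(n-2)}$.

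The $a_1$ term gives
\[
2^{2n-2}a_1t_1^{-(n-2)}|\SSphere^{n-1}|\int_0^\infty\frac{r\,dr}{(1+r^2)^n}=\frac{2^{2n-3}}{n-1}|\SSphere^{n-1}|a_1t_1^{-(n-2)},
\]
using $\int_0^\infty r(1+r^2)^{-n}dr=\frac{1}{2(n-1)}$. The singularity $|z|^{-(n-2)}$ is integrable in $\RR^n$, so this is legitimate.

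For the remainder, the region $|z|\ge t_1^{-1}$ contributes at most
\[
Ct_1^{-\mu_1}\int|z|^{-\mu_1}(1+|z|^2)^{-n}dz=O(t_1^{-\mu_1}),
\]
which is finite because $\mu_1<n$ and $2n+\mu_1>n$. The region $|z|\le t_1^{-1}$ contributes $O\big(t_1^{-(n-2)}\int_{|z|\le t_1^{-1}}|z|^{-(n-2)}dz\big)=O(t_1^{-n})=O(t_1^{-\mu_1})$.

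These three pieces together give \eqref{R1-1}, with $\mu$ read as $\mu_i$.

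The only delicate point is that the expansion of $K$ is valid only for large $|y|$, while the model term $|y|^{-(n-2)}$ is singular at $y=0$. The inner-region estimate above handles this; the rest is routine beta-integral computation.
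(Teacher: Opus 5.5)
Your proof is correct and follows the same route as the paper: pull back by stereographic projection and rescale $z=t_1^{-1}y$. Then split $K(t_1z)$ into the $K_1$ term, the $2^{n-2}a_1(t_1|z|)^{-(n-2)}$ term, and a remainder, which is bounded by $Ct_1^{-\mu_1}|z|^{-\mu_1}$ for $|z|\ge t_1^{-1}$ and estimated directly on $|z|\le t_1^{-1}$. Your inner-region bound $|E(y)|\le C|y|^{-(n-2)}$ is slightly more careful than the paper's, which records the remainder there as $O(1)$. Both give the same $O(t_1^{-n})=O(t_1^{-\mu_1})$ contribution.
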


\begin{proof}
We will only consider $i = 1$. We have
\[
\int_{\mathbb{S}^n} K \delta_{P_1,t_1}^{\frac{2n}{n-2}}
    = 2^n t_1^{-n}\int_{\mathbb{R}^n} \frac{K(|y|)dy}{(1 + t_1^{-2}|y|^2)^n}
\]
Making the change of variable $z = t_1^{-1}y$ and switching to spherical coordinates, we get
\[
\int_{\mathbb{S}^n} K \delta_{P_1,t_1}^{\frac{2n}{n-2}}
    = 2^n |\mathbb{S}^{n-1}|\int_0^\infty \frac{K(t_1 r) r^{n-1}dr}{(1 + r^2)^n}
\]
Now, by \eqref{higher0X},
\[
K(s) = K_1 + 2^{n-2}a_1 s^{-(n-2)} + \begin{cases}
    O(1) &\text{ for } s \leq 1,\\
    O(1)s^{-\mu_1} & \text{ for } s > 1.
\end{cases}
\]
Thus
\begin{align*}
\int_{\mathbb{S}^n} K \delta_{P_1,t_1}^{\frac{2n}{n-2}}
    &= 2^n |\mathbb{S}^{n-1}|\int_0^\infty \frac{K_1 r^{n-1}dr}{(1 + r^2)^n}
        + 2^{2n-2}a_1 t_1^{-(n-2)} |\mathbb{S}^{n-1}|\int_0^\infty \frac{rdr}{(1 + r^2)^n}\\
        &\qquad
        + O(1) \int_0^{t_1^{-1}} r^{n-1}dr
        + O(1) t_1^{-\mu_1} \int_{t_1^{-1}} \frac{r^{n-1-\mu_1}dr}{(1+r^2)^n}\\
    &= 2^n |\mathbb{S}^{n-1}|\int_0^\infty \frac{K_1 r^{n-1}dr}{(1 + r^2)^n}
        + \frac{2^{2n-3}}{n-1}a_1 t_1^{-(n-2)} |\mathbb{S}^{n-1}|   
        + O(1) t_1^{-\mu_1}.
\end{align*}
The conclusion follows from \eqref{AB1-}.
\end{proof}

\begin{lem}\label{R2}
For $t_1, t_2 \geq e$ and $i \neq j$,
\begin{align}
\int_{\mathbb{S}^n} K \delta_{P_i,t_i}^{\frac{n}{n-2}}\delta_{P_j,t_j} 
    &=  \frac{2^n}{n} |\mathbb{S}^{n-1}| K_i(t_1t_2)^{-\frac{n-2}{2}} \times \nonumber\\
        &\qquad \times \left(1+O ((t_{1}t_{2})^{-2}\ln(t_{1}t_{2}))+\begin{cases}
O(t_{i}^{-2}\ln t_{i})& \text{ if }n=4,\\
O(t_{i}^{-2})& \text{ if }n\geq 5
\end{cases}\right)
        \label{R2-1}
\end{align}
\end{lem}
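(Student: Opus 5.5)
Throughout, I read the exponent on $\delta_{P_i,t_i}$ in \eqref{R2-1} as the critical power $\frac{n+2}{n-2}$. Only with this exponent is the constant consistent with Lemma \ref{A2}: since $-L_{g_0}\delta_{P,t}=\frac{n(n-2)}{4}\delta_{P,t}^{\frac{n+2}{n-2}}$, we get
\[
\int_{\SSphere^n}\delta_{P_1,t_1}^{\frac{n+2}{n-2}}\delta_{P_2,t_2}=\frac{4}{n(n-2)}\langle\delta_{P_1,t_1},\delta_{P_2,t_2}\rangle,
\]
and \eqref{A2-1} then gives exactly the leading term $\frac{2^n}{n}|\SSphere^{n-1}|(t_1t_2)^{-\frac{n-2}{2}}$. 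By symmetry of the two poles I treat only $i=1$, $j=2$, and set $T=t_1t_2$.

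The plan is to split $K=K_1+(K-K_1)$. The $K_1$ part is $K_1$ times the integral just computed, so \eqref{A2-1} supplies the main term together with the relative error $O(T^{-2}\ln T)$. The remaining work is to show that the $(K-K_1)$ part is $T^{-\frac{n-2}{2}}$ times $O(t_1^{-2}\ln t_1)$ when $n=4$, and times $O(t_1^{-2})$ when $n\ge5$.

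To do this I pull back by the stereographic projection \eqref{AA1}, using the formulas \eqref{AA2} for the bubbles. Then I substitute $z=t_2y$, $r=|z|$, exactly as in the proof of Lemma \ref{A2}. This gives
\[
\int_{\SSphere^n}(K-K_1)\delta_{P_1,t_1}^{\frac{n+2}{n-2}}\delta_{P_2,t_2}
=2^n T^{-\frac{n+2}{2}}\int_{\RR^n}\frac{(K-K_1)(|z|/t_2)\,dz}{(1+T^{-2}|z|^2)^{\frac{n+2}{2}}(1+|z|^2)^{\frac{n-2}{2}}}.
\]
By \eqref{higher0X} (in the $|y|\to\infty$ form recorded in the appendix) and the boundedness of $K$, we have
\[
|K-K_1|(s)\le C\min\{1,s^{-(n-2)}\},
\qquad\text{so}\qquad
|(K-K_1)(r/t_2)|\le C\min\{1,(t_2/r)^{n-2}\}.
\]

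Next I bound the radial integrand $r^{n-1}(\cdots)$ region by region.
\begin{itemize}
\item For $r\le t_2$ it is $O(\min\{r^{n-1},r\})$. This contributes $O(t_2^2)$.
\item For $t_2\le r\le T$ it is $O(t_2^{n-2}r^{3-n})$. This contributes $O(t_2^2\ln(T/t_2))=O(t_2^2\ln t_1)$ when $n=4$, and $O(t_2^2)$ when $n\ge5$.
\item For $r\ge T$ it is $O(t_2^{n-2}T^{n+2}r^{-2n-1})$. This contributes $O(t_2^{n-2}T^{2-n})$, which is negligible.
\end{itemize}
Multiplying by $2^nT^{-\frac{n+2}{2}}$ and comparing with the main term, which is of size $T^{-\frac{n+2}{2}}\cdot T^2$, gives the relative errors $t_2^2/T^2=t_1^{-2}$ for $n\ge5$ and $t_1^{-2}\ln t_1$ for $n=4$, as claimed.

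The only delicate point is the intermediate region $t_2\le r\le T$. There the decay of $K-K_1$ like $|y|^{-(n-2)}$ must beat the bubble profile $r^{n-1}\cdot r^{-(n-2)}$. The borderline exponent $3-n=-1$ occurs precisely when $n=4$, and that is where the logarithm comes from. All other steps are direct applications of Lemma \ref{A2}.
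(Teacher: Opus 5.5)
Your proposal is correct and is essentially the paper's proof. You read the exponent as $\frac{n+2}{n-2}$, which is what the paper's proof actually uses, take the main term from $K_1$ times \eqref{A2-1}, and bound the $(K-K_1)$ part after the substitution $z=t_2y$ by splitting the radial integral into $r\le t_2$, $t_2\le r\le T$, $r\ge T$, exactly as the paper does. There is one small arithmetic slip: for $r\ge T$ the integrand is $O(t_2^{n-2}T^{n+2}r^{-2n+1})$, not $r^{-2n-1}$, so that region contributes $O(t_2^{n-2}T^{4-n})=O(t_2^{2}t_1^{4-n})$ rather than $O(t_2^{n-2}T^{2-n})$. This is still $O(t_2^2)$, though of the same order when $n=4$ rather than negligible, so the conclusion is unaffected.
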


\begin{proof}
We will only consider $i = 1$ and $j = 2$. Let $T = t_1t_2$. Since $-L_{g_0} \delta_{P,t} = \frac{n(n-2)}{4} \delta_{P,t}^{\frac{n+2}{n-2}}$, we have by \eqref{A2-1} in Lemma \ref{A2} that
\[
\int_{\mathbb{S}^n}   \delta_{P_1,t_1}^{\frac{n+2}{n-2}} \delta_{P_2,t_2}
    = \frac{4}{n(n-2)} \langle \delta_{P_1,t_1}, \delta_{P_2,t_2}\rangle = \frac{2^n}{n} |\mathbb{S}^{n-1}| T^{-\frac{n-2}{2}}[1 + O(T^{-2}\ln T)].
\]
On the other hand, computing as in the proof of Lemma \ref{A2}, we have
\[
\int_{\mathbb{S}^n} (K - K_1) \delta_{P_1,t_1}^{\frac{n+2}{n-2}} \delta_{P_2,t_2}
    = 2^n |\mathbb{S}^{n-1}| T^{-\frac{n+2}{2}}\int_0^\infty \frac{(K(r/t_2) - K_1) r^{n-1}dr}{(1 + T^{-2}r^2)^{\frac{n+2}{2}}(1 + r^2)^{\frac{n-2}{2}}}.
\]
Therefore to obtain \eqref{R2-1}, it suffices to show that
\begin{equation}
\int_0^\infty \frac{|K(r/t_2) - K_1|r^{n-1}dr}{(1 + T^{-2}r^2)^{\frac{n+2}{2}}(1 + r^2)^{\frac{n-2}{2}}}
    \leq \begin{cases}
O(t_{2}^2\ln t_{1})& \text{ if }n=4,\\
O(t_{2}^2)& \text{ if }n\geq 5.
\end{cases}
    \label{R2-P}
\end{equation}
Indeed, to estimate the integrand, we use \eqref{higher0X} which gives
\[
|K(r/t_2) - K_1| \leq \begin{cases}
    O(1) & \text{ if } 0 \leq r \leq t_2,\\
    O(1) t_2^{n-2} r^{-(n-2)} & \text{ if } r > t_2.
\end{cases}
\]
Hence, the integrand is 
\[
\begin{cases}
O(1) r & \text{ if } 0 \leq r \leq t_2,\\
O(1) t_2^{n-2} r^{-(n-3)} &\text{ if } t_2 < r < T,\\
O(1) T^{n+2} t_2^{n-2} r^{-2n + 1} &\text{ if } r \geq T.
\end{cases}
\]
Integrating we obtain \eqref{R2-P}, which concludes the proof.
\end{proof}

\begin{lem}\label{A3}
\begin{align}
 \int_{\SSphere^{n}}K\delta_{P_{i},t_{i}}^{\frac{n+2}{n-2}}\frac{\partial \delta_{P_{i},t_{i}}}{\partial t_{i}} 
&  =-\frac{2^{2n-4}(n-2)^2}{n(n-1)} |\SSphere^{n-1}|  a_{i} t_{i}^{-(n-1)}\nonumber\\
&\quad -\frac{2^{\mu_{i}+n-1}(n-2)}{n}\mu_{i}|\SSphere^{n-1}|\left(\int_{0}^{\infty}\frac{r^{n+\mu_{i}-1}dr}{(1+r^{2})^{n}}\right)b_{i}t_{i}^{-(\mu_{i}+1)}
    +o(t_{i}^{-(\mu_{i}+1)}),\label{A3-1}
\end{align}
where the little o notation is meant for $t_i \rightarrow \infty$ and with a rate of convergence depending on the function $\omega$ in \eqref{higher0X}.
\end{lem}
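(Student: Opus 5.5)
The plan is to reduce the integral to the $t_i$-derivative of $\int_{\SSphere^n}K\delta_{P_i,t_i}^{\frac{2n}{n-2}}$, which was already written in radial form in the proof of Lemma \ref{R1}. I will treat only $i=1$; the case $i=2$ is symmetric. The key identity is pointwise:
\[
\delta_{P_1,t_1}^{\frac{n+2}{n-2}}\frac{\partial\delta_{P_1,t_1}}{\partial t_1}=\frac{n-2}{2n}\frac{\partial}{\partial t_1}\delta_{P_1,t_1}^{\frac{2n}{n-2}} .
\]
Since $K$ does not depend on $t_1$, it gives
\[
\int_{\SSphere^n}K\delta_{P_1,t_1}^{\frac{n+2}{n-2}}\frac{\partial\delta_{P_1,t_1}}{\partial t_1}=\frac{n-2}{2n}\frac{d}{dt_1}\int_{\SSphere^n}K\delta_{P_1,t_1}^{\frac{2n}{n-2}}.
\]
By the computation in Lemma \ref{R1},
\[
\int_{\SSphere^n}K\delta_{P_1,t_1}^{\frac{2n}{n-2}}=2^n|\SSphere^{n-1}|\int_0^\infty \frac{K(t_1r)r^{n-1}}{(1+r^2)^n}\,dr .
\]
I will differentiate under the integral sign. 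This is justified because $K\in C^1_r$, so $K'$ is bounded on compact ranges of $s=|y|$, and $|K'(s)|\le Cs^{-(n-1)}$ for large $s$ by \eqref{higher0X}; together these give an integrable majorant locally uniformly in $t_1\geq1$. The result is
\[
\int_{\SSphere^n}K\delta_{P_1,t_1}^{\frac{n+2}{n-2}}\frac{\partial\delta_{P_1,t_1}}{\partial t_1}=\frac{n-2}{2n}2^n|\SSphere^{n-1}|\int_0^\infty\frac{K'(t_1r)\,r^{n}}{(1+r^2)^n}\,dr .
\]

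Next I decompose $K'$. From the expansion of $K$ at $r=\infty$ (with derivative control from \eqref{higher0X}), write
\[
K'(s)=-(n-2)2^{n-2}a_1s^{-(n-1)}-\mu_1 2^{\mu_1}b_1 s^{-\mu_1-1}+\rho(s),\qquad |\rho(s)|\le \epsilon(s)\,s^{-\mu_1-1},
\]
where $\epsilon$ is bounded on $(0,\infty)$ and $\epsilon(s)\to0$ as $s\to\infty$, at a rate governed by $\omega$. Boundedness of $\epsilon$ on bounded $s$ holds because there both explicit terms and $K'$ are $O(s^{-\mu_1-1})$: $K'$ is bounded and $\mu_1+1>n-1$.

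Inserting the $a_1$-term uses $\int_0^\infty r(1+r^2)^{-n}dr=\frac1{2(n-1)}$ and yields exactly $-\frac{2^{2n-4}(n-2)^2}{n(n-1)}|\SSphere^{n-1}|a_1t_1^{-(n-1)}$. Inserting the $b_1$-term gives $t_1^{-\mu_1-1}\int_0^\infty r^{n-\mu_1-1}(1+r^2)^{-n}dr$. The substitution $r\mapsto1/r$ converts this into $\int_0^\infty r^{n+\mu_1-1}(1+r^2)^{-n}dr$, and the prefactor $\frac{n-2}{2n}2^n\mu_12^{\mu_1}$ matches the stated coefficient.

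The remainder contributes
\[
t_1^{-\mu_1-1}\int_0^\infty\epsilon(t_1r)\frac{r^{n-\mu_1-1}}{(1+r^2)^n}\,dr .
\]
Since $\mu_1<n$, the weight $r^{n-\mu_1-1}(1+r^2)^{-n}$ is integrable near $0$ and near $\infty$, and $\epsilon$ is bounded. Dominated convergence therefore shows the integral tends to $0$ as $t_1\to\infty$, giving $o(t_1^{-(\mu_1+1)})$ with a rate depending only on $\omega$.

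The main point needing care is this remainder step. One must check that the derivative bound in \eqref{higher0X} indeed gives $|\rho(s)|\le\epsilon(s)s^{-\mu_1-1}$ uniformly, including at small $s$, where the explicit terms are singular but $K'$ is not. One must also check that integrability at $r=0$ relies precisely on $\mu_1<n$. The rest is bookkeeping of constants.
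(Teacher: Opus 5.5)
Your argument is correct, but it takes a different route from the paper's.

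\textbf{What the paper does.} The paper never passes through $\frac{d}{dt_1}\int_{\SSphere^n}K\delta_{P_1,t_1}^{\frac{2n}{n-2}}$. It first uses $\int_{\SSphere^n}\delta_{P_1,t_1}^{\frac{n+2}{n-2}}\partial_{t_1}\delta_{P_1,t_1}=0$ to replace $K$ by $K-K_1$, and it keeps the $t_1$-derivative on the bubble. This yields \eqref{A3-P1}: the weight $(1-r^2)r^{n-1}(1+r^2)^{-n-1}$ tested against $K(t_1r)-K_1$. It then splits at $r=C_0t_1^{-1}$, evaluates the $a_1$- and $b_1$-pieces by integration by parts, and controls the remainder by an $\varepsilon$--$C_0$ argument.

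\textbf{How your route relates.} Your formula is exactly the paper's after one integration by parts in $r$. Indeed $\frac{d}{dr}\big(r^n(1+r^2)^{-n}\big)=n(1-r^2)r^{n-1}(1+r^2)^{-n-1}$, and the boundary terms vanish. Moving the derivative onto $K$ has three effects:
\begin{enumerate}
\item the subtraction of $K_1$ becomes automatic;
\item the explicit coefficients drop out of single Beta-type integrals, with no auxiliary integration-by-parts identities;
\item the remainder reduces to a one-line dominated convergence argument.
\end{enumerate}

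\textbf{Trade-off.} You use more regularity of $K$. You need the derivative half of \eqref{higher0X} (the bound on $|\pi-\theta||R^{(1)\prime}|$), together with $K\in C^1_r$, for two things: to get $|\rho(s)|\le\epsilon(s)s^{-\mu_1-1}$ globally, and to justify differentiating under the integral sign. The paper's proof of this lemma needs only the $C^0$ bound $|R^{(1)}|\le\omega(\pi-\theta)|\pi-\theta|^{\mu_1}$ and boundedness of $K$. Since \eqref{higher0X} is assumed anyway, both routes are valid: yours is shorter and more transparent, while the paper's demands less of $K$.
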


\begin{proof}
We will only consider $i = 1$. Since $-L_{g_0} \delta_{P,t} = \frac{n(n-2)}{4}\delta_{P,t}^{\frac{n+2}{n-2}}$,
\[
\int_{\SSphere^{n}}\delta_{P_{1},t_{1}}^{\frac{n+2}{n-2}}\frac{\partial \delta_{P_{1},t_{1}}}{\partial t_{1}}
    = \frac{4}{n(n-2)}\left\langle\delta_{P_1,t_1},\frac{\partial \delta_{P_1,t_1}}{\partial t_1}\right\rangle = 0.
\]
(The last identity can be seen e.g. by differentiating \eqref{AB1-}.) Therefore
\begin{align*}
\int_{\SSphere^{n}} K\delta_{P_{1},t_{1}}^{\frac{n+2}{n-2}}\frac{\partial \delta_{P_{1},t_{1}}}{\partial t_{1}}
    &= \int_{\SSphere^{n}} (K - K_1)\delta_{P_{1},t_{1}}^{\frac{n+2}{n-2}}\frac{\partial \delta_{P_{1},t_{1}}}{\partial t_{1}}\\
    &= -2^{n-1} (n-2) t_1^{-n-1}\int_{\mathbb{R}^n} \frac{(K(|y|) - K_1)(1 - t_1^{-2}|y|^2)}{(1 + t_1^{-2}|y|^2)^{n+1}} dy.
\end{align*}
Making the change of variables $z = t_1^{-1}y$ and switching to spherical coordinates, we get
\begin{equation}
\int_{\SSphere^{n}} K\delta_{P_{1},t_{1}}^{\frac{n+2}{n-2}}\frac{\partial \delta_{P_{1},t_{1}}}{\partial t_{1}}
    = -2^{n-1} (n-2) |\mathbb{S}^{n-1}| t_1^{-1} \int_0^\infty \frac{(K(t_1r) - K_1)(1 - r^2)r^{n-1}}{(1 + r^2)^{n+1}}dr.
    \label{A3-P1}
\end{equation}

Fix an arbitrary $\varepsilon > 0$. In the sequel the implicit constants in the big $O$ terms are independent of $\varepsilon$. By \eqref{higher0X}, there exists $C_0 = C_0(\varepsilon,\omega)$ such that
\begin{equation}
|\bar R_1(s)| = |K(s) - K_1 - 2^{n-2}a_1 s^{-(n-2)} - 2^{\mu_1} b_1 s^{-\mu_1}| \leq \varepsilon s^{-\mu_1} \text{ for } s \geq C_0,
    \label{A3-P2}
\end{equation}
and there exists $C_1 = C_1(\omega)$
\begin{equation}
|K(s) - K_1| \leq C_1 \text{ for } s < C_0.
     \label{A3-P3}
\end{equation}
We assume in the sequel that $t_1 \geq 2C_0$

We then split the integral on the right hand side of \eqref{A3-P1} as
\begin{align*}
I   
    &:= \int_0^\infty \frac{(K(t_1r) - K_1)(1 - r^2)r^{n-1}}{(1 + r^2)^{n+1}}dr\\
    &= \int_{C_0t_1^{-1}}^\infty \frac{2^{n-2}a_1t_1^{-(n-2)}(1 - r^2)r}{(1 + r^2)^{n+1}}dr
        + \int_{C_0t_1^{-1}}^\infty \frac{2^{\mu_1}b_1t_1^{-\mu_1}(1 - r^2)r^{n-1-\mu_1}}{(1 + r^2)^{n+1}}dr\\
        &\qquad
        + \int_{C_0t_1^{-1}}^\infty \frac{\bar R_1(t_1r) (1 - r^2)r^{n-1}}{(1 + r^2)^{n+1}}dr
        + \int_0^{C_0t_1^{-1}} \frac{(K(t_1r) - K_1)(1 - r^2)r^{n-1}}{(1 + r^2)^{n+1}}dr\\
    &=: I_1 + I_2 + I_3 + I_4
\end{align*}

We compute $I_1$ using integration by parts,
\begin{align*}
I_1
    &= 2^{n-2}a_1t_1^{-(n-2)}\int_{0}^\infty \frac{(1 - r^2)r}{(1 + r^2)^{n+1}}dr + O(1) C_0^2 t_1^{-n}\\
    &= -\frac{2^{n-3}}{n}a_1t_1^{-(n-2)} \int_{0}^\infty  (1 - r^2)   \frac{d}{dr}\Big(\frac{1}{(1 + r^2)^n} \Big) dr + O(1) C_0^2 t_1^{-n}\\
    &= -\frac{2^{n-3}}{n}a_1t_1^{-(n-2)}\Big[- 1 + \int_{0}^\infty   \frac{2r}{(1 + r^2)^n}  dr\Big] + O(1) C_0^2 t_1^{-n}\\
    &= \frac{2^{n-3}(n-2)}{n(n-1)}a_1t_1^{-(n-2)} + O(1) C_0^2 t_1^{-n}.
\end{align*}

For $I_2$, we start with
\[
I_2
    = 2^{\mu_1}b_1t_1^{-\mu_1}\int_{0}^\infty \frac{(1 - r^2)r^{n-1-\mu_1}}{(1 + r^2)^{n+1}}dr + O(1) C_0^{n-\mu_1}t_1^{-n},
\]
and then use the chain of identities
\[
\int_{0}^\infty \frac{r^{n+1-\mu_1}}{(1 + r^2)^{n+1}}dr
    = -\frac{1}{2n}\int_{0}^\infty r^{n-\mu_1}  \frac{d}{dr}\Big(\frac{1}{(1 + r^2)^n} \Big) dr
    = \frac{n-\mu_1}{2n}\int_{0}^\infty\frac{r^{n-1-\mu_1}}{(1 + r^2)^n}\,dr
\]
to obtain
\[
I_2 = \frac{2^{\mu_1-1}}{n} \mu_1 b_1 t_1^{-\mu_1}\int_{0}^\infty \frac{r^{n-1-\mu_1}}{(1 + r^2)^n}dr + O(1) C_0^{n-\mu_1}t_1^{-n}.
\]
Perform an inversion $r \mapsto r^{-1}$, we find
\[
I_2 = \frac{2^{\mu_1-1}}{n} \mu_1 b_1 t_1^{-\mu_1}\int_{0}^\infty \frac{r^{n-1+\mu_1}}{(1 + r^2)^n}dr + O(1) C_0^{n-\mu_1}t_1^{-n}.
\]

By \eqref{A3-P2},
\[
|I_3| \leq \int_{C_0 t_1^{-1}}^\infty\frac{\varepsilon t_1^{-\mu_1}|1 - r^2|r^{n-1-\mu_1}}{(1 + r^2)^{n+1}}dr = O(1) \varepsilon t_1^{-\mu}.
\]

By \eqref{A3-P3},
\[
|I_4| \leq \int_0^{C_0 t_1^{-1}} O(1)r^{n-1}dr = O(1)C_0^n t_1^{-n}.
\]

Putting the above estimates for $I_1, \ldots, I_4$ into \eqref{A3-P1}, we arrive at
\begin{align*}
\int_{\SSphere^{n}}K\delta_{P_{1},t_{1}}^{\frac{n+2}{n-2}}\frac{\partial \delta_{P_{1},t_{1}}}{\partial t_{1}} 
&  =-\frac{2^{2n-4}(n-2)^2}{n(n-1)} |\SSphere^{n-1}|  a_{1} t_{1}^{-(n-1)}\nonumber\\
&\quad\quad-\frac{2^{\mu_{1}+n-1}(n-2)}{n}\mu_{1}|\SSphere^{n-1}|\left(\int_{0}^{\infty}\frac{r^{n+\mu_{i}-1}dr}{(1+r^{2})^{n}}\right)b_{1}t_{1}^{-(\mu_{1}+1)}\\
    &\quad\quad
    + O(1) \big[\varepsilon + (C_0^2  + C_0^{n-\mu_1} + C_0^n) t_1^{-(n-\mu_1)}\big]t_{1}^{-(\mu_{1}+1)}, 
\end{align*}
In particular, for all sufficiently large $t_1$, the last error term is $O(1) \varepsilon t_{1}^{-(\mu_{1}+1)}$. As $\varepsilon$ is arbitrary, the lemma is proved.
\end{proof}

\begin{lem}\label{A4}
There exists a constant $C=C(n,K)>0$ depending on $n$ and $K$ such that if $t_{1}$, $t_{2}>C$, we have for $i \neq j$ that
\begin{align}
&\int_{\SSphere^{n}}K\delta_{P_{i},t_{i}}^{\frac{n+2}{n-2}}\frac{\partial \delta_{P_{j},t_{j}}}{\partial t_{j}}\nonumber\\
&\quad=-\frac{2^{n-1}(n-2)}{n}|\SSphere^{n-1}|K_{i}t_{j}^{-1}(t_{1}t_{2})^{-\frac{n-2}{2}} \times\nonumber\\
    &\qquad \times\left(1+O_{n}((t_{1}t_{2})^{-2}\ln(t_{1}t_{2}))+\begin{cases}
O(t_{i}^{-2}\ln t_{i})& \text{ if }n=4,\\
O(t_{i}^{-2})& \text{ if }n\geq 5
\end{cases}\right),\label{A4-1}
\end{align}
and
\begin{align}
&\int_{\SSphere^{n}}K\delta_{P_{i},t_{i}}^{\frac{4}{n-2}}\delta_{P_{j},t_{j}}\frac{\partial \delta_{P_{i},t_{i}}}{\partial t_{i}}\nonumber\\
&\quad=-\frac{2^{n-1}(n-2)^{2}}{n(n+2)}|\SSphere^{n-1}|K_{i}t_{i}^{-1}(t_{1}t_{2})^{-\frac{n-2}{2}} \times\nonumber\\
&\qquad \times \left(1+O_{n}((t_{1}t_{2})^{-2}\ln(t_{1}t_{2}))+\begin{cases}
O(t_{i}^{-2}\ln t_{i})&\text{ if }n=4,\\
O(t_{i}^{-2})& \text{ if }n\geq 5.
\end{cases}\right).\label{A4-2}
\end{align}
\end{lem}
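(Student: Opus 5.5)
We will only treat $i=1$, $j=2$; the case $i=2$, $j=1$ is symmetric. The plan mirrors the proof of Lemma \ref{R2}: first remove the variable coefficient $K$ by writing $K = K_1 + (K-K_1)$, compute the constant-coefficient integrals exactly via the bubble equation and Lemma \ref{A2}, and then show that the $(K-K_1)$ contribution is of relative size $O(t_1^{-2})$ (or $O(t_1^{-2}\ln t_1)$ when $n=4$).

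For \eqref{A4-1}, we use $-L_{g_0}\delta_{P_1,t_1} = \frac{n(n-2)}{4}\delta_{P_1,t_1}^{\frac{n+2}{n-2}}$. This gives
\[
\int_{\SSphere^n}\delta_{P_1,t_1}^{\frac{n+2}{n-2}}\frac{\partial\delta_{P_2,t_2}}{\partial t_2} = \frac{4}{n(n-2)}\Big\langle \delta_{P_1,t_1},\frac{\partial\delta_{P_2,t_2}}{\partial t_2}\Big\rangle .
\]
By \eqref{A2-2} the right-hand side equals $-\frac{2^{n-1}(n-2)}{n}|\SSphere^{n-1}|t_2^{-1}(t_1t_2)^{-\frac{n-2}{2}}[1+O(T^{-2}\ln T)]$, with $T=t_1t_2$. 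Multiplying by $K_1$ yields the main term.

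For \eqref{A4-2}, there is no such direct identity. We instead differentiate the exact formula for $\int_{\SSphere^n}\delta_{P_1,t_1}^{\frac{n+2}{n-2}}\delta_{P_2,t_2}$ from the proof of Lemma \ref{R2} with respect to $t_1$; note that $\partial_{t_1}(\delta_{P_1,t_1}^{\frac{n+2}{n-2}}) = \frac{n+2}{n-2}\delta_{P_1,t_1}^{\frac4{n-2}}\partial_{t_1}\delta_{P_1,t_1}$. A more robust alternative is to pull back to $\RR^n$ and repeat the three-region analysis ($r\le1$, $1\le r\le T$, $r\ge T$) of Lemma \ref{A2}. After the change of variables $z=t_2y$, the integrand becomes
\[
t_1^{-1}T^{-\frac{n+2}{2}}\frac{r^{n-1}(1-T^{-2}r^2)}{(1+T^{-2}r^2)^{\frac{n+4}{2}}(1+r^2)^{\frac{n-2}{2}}}
\]
times explicit constants. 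Its leading part is $\int_0^\infty r(1-T^{-2}r^2)(1+T^{-2}r^2)^{-\frac{n+4}{2}}dr = \frac{n-2}{n(n+2)}T^2$, and the error is $O(\ln T)$. Tracking the constants $2^n$, $-\frac{n-2}{2}$ and $\frac{n+2}{n-2}$ should reproduce $-\frac{2^{n-1}(n-2)^2}{n(n+2)}$. This constant bookkeeping is routine, but we will cross-check it against \eqref{A2-2}.

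The main obstacle is the $(K-K_1)$ contributions. In both integrals we pull back with $z=t_2y$, exactly as in the display preceding \eqref{R2-P}, and use $|K(r/t_2)-K_1|\le C$ for $r\le t_2$ and $\le Ct_2^{n-2}r^{-(n-2)}$ for $r>t_2$. The extra factors coming from the $t$-derivatives, namely $(1-T^{-2}r^2)/(1+T^{-2}r^2)$ or $(1-r^2)/(1+r^2)$ together with the $t^{-1}$ prefactors, are bounded in absolute value by $1$ times the prefactor. Hence the resulting integrand is dominated by the same majorant as in \eqref{R2-P}: $O(r)$ on $[0,t_2]$, $O(t_2^{n-2}r^{-(n-3)})$ on $[t_2,T]$, and rapid decay beyond $T$. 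Integrating gives $O(t_2^2)$ for $n\ge5$ and $O(t_2^2\ln t_1)$ for $n=4$. Relative to the main term, which is of order $T^2$ inside the integral, this is $O(t_1^{-2})$, respectively $O(t_1^{-2}\ln t_1)$, i.e. the stated $O(t_i^{-2})$ or $O(t_i^{-2}\ln t_i)$. Combining these error bounds with the main terms gives \eqref{A4-1} and \eqref{A4-2}.
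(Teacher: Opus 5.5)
Your proposal is correct and follows the paper's proof. You split $K=K_1+(K-K_1)$, obtain the $K_1$ parts from inner-product identities together with \eqref{A2-2}, and bound the $(K-K_1)$ parts by $t^{-1}$ times the integral in \eqref{R2-P}. The only slip is your remark that \eqref{A4-2} has no direct identity. Differentiating the bubble equation in $t$ gives $-L_{g_0}\frac{\partial\delta_{P,t}}{\partial t}=\frac{n(n+2)}{4}\delta_{P,t}^{\frac{4}{n-2}}\frac{\partial\delta_{P,t}}{\partial t}$, hence $\int_{\SSphere^n}\delta_{P_1,t_1}^{\frac{4}{n-2}}\delta_{P_2,t_2}\frac{\partial\delta_{P_1,t_1}}{\partial t_1}=\frac{4}{n(n+2)}\big\langle\frac{\partial\delta_{P_1,t_1}}{\partial t_1},\delta_{P_2,t_2}\big\rangle$. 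This is exactly what $t_1$-differentiating the exact identity in Lemma \ref{R2} yields, and your three-region computation just re-derives \eqref{A2-P2}.
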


\begin{proof}
We will only consider the case $i = 1$ and $j = 2$. Let $T = t_1t_2$. 

Recall that $-L_{g_0} \delta_{P,t} = \frac{n(n-2)}{4}\delta_{P,t}^{\frac{n+2}{n-2}}$ and $-L_{g_0} \frac{\partial\delta_{P,t}}{\partial t} = \frac{n(n+2)}{4}\delta_{P,t}^{\frac{4}{n-2}}\frac{\partial\delta_{P,t}}{\partial t}$. Thus, by \eqref{A2-2} in Lemma \ref{A2},
\begin{align*}
\int_{\mathbb{S}^n} K_1 \delta_{P_1,t_1}^{\frac{n+2}{n-2}}\frac{\partial \delta_{P_2,t_2}}{\partial t_2}
    &= \frac{4K_1}{n(n-2)}\left\langle\delta_{P_1,t_1},\frac{\partial \delta_{P_2,t_2}}{\partial t_2}\right\rangle\\
    &= - \frac{2^{n-1} (n-2)}{n} |\SSphere^{n-1}| K_1 t_{2}^{-1}T^{-\frac{n-2}{2}}   \big[1+O_{n}(T^{-2}\ln T)\big],\\
\int_{\SSphere^{n}}K_1\delta_{P_{1},t_{1}}^{\frac{4}{n-2}}\delta_{P_{2},t_{2}}\frac{\partial \delta_{P_{1},t_{1}}}{\partial t_{1}} 
     &= \frac{4K_1}{n(n+2)}\left\langle\frac{\partial \delta_{P_1,t_1}}{\partial t_1},\delta_{P_2,t_2}\right\rangle\\
    & =-\frac{2^{n-1}(n-2)^{2}}{n(n+2)}|\SSphere^{n-1}|K_1t_1^{-1}T^{-\frac{n-2}{2}}  \big[1+O_{n}(T^{-2}\ln T)\big].
\end{align*}
On the other hand, computing as in the proof of Lemma \ref{A2}, we find
\begin{align*}
\Big|\int_{\SSphere^{n}}(K-K_1)\delta_{P_{1},t_{1}}^{\frac{n+2}{n-2}}\frac{\partial \delta_{P_{2},t_{2}}}{\partial t_{2}}\Big|
    &\leq O(1) t_2^{-1} T^{-\frac{n+2}{2}}\int_0^\infty \frac{|K(r/t_2) - K_1|r^{n-1}dr}{(1 + T^{-2}r^2)^{\frac{n+2}{2}}(1 + r^2)^{\frac{n-2}{2}}},\\
\Big|\int_{\SSphere^{n}}(K- K_1)\delta_{P_{1},t_{1}}^{\frac{4}{n-2}}\delta_{P_{2},t_{2}}\frac{\partial \delta_{P_{1},t_{1}}}{\partial t_{1}}
    &\leq O(1) t_1^{-1} T^{-\frac{n+2}{2}}\int_0^\infty \frac{|K(r/t_2) - K_1|r^{n-1}dr}{(1 + T^{-2}r^2)^{\frac{n+2}{2}}(1 + r^2)^{\frac{n-2}{2}}}.
\end{align*}
Recalling the bound \eqref{R2-P} for the integral on the right hand side, we conclude the proof.
\end{proof}

\begin{lem}\label{A16}
If $t_{1}t_{2}\geq e$, we have for $i \neq j$ that
\begin{align}
&\int_{\SSphere^{n}}K\Big|(\sum\limits_{l=1}^{2}\bar{\alpha}_{l}\delta_{P_{l},t_{l}})^{\frac{n+2}{n-2}}-\sum\limits_{l=1}^{2}(\bar{\alpha}_{l}\delta_{P_{l},t_{l}})^{\frac{n+2}{n-2}}-\frac{n+2}{n-2}(\bar{\alpha}_{i}\delta_{P_{i},t_{i}})^{\frac{4}{n-2}}(\bar{\alpha}_{j}\delta_{P_{j},t_{j}})\Big| \Big|\frac{\partial \delta_{P_{i},t_{i}}}{\partial t_{i}}\Big|\nonumber\\
&\quad \leq  
    C(n,K)t_{i}^{-1}(t_{1}t_{2})^{-\frac{n}{2}}\ln(t_{1}t_{2}).\label{A16-1}
\end{align}
\end{lem}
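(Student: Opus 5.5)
We prove Lemma \ref{A16} by a pointwise elementary inequality followed by region splitting in the stereographic coordinates. Take $i=1$, $j=2$, and write $T=t_1t_2$, $a=\bar\alpha_1\delta_{P_1,t_1}$, $b=\bar\alpha_2\delta_{P_2,t_2}$, $p=\frac{n+2}{n-2}$. The first step is the inequality
\[
\big|(a+b)^p-a^p-b^p-p\,a^{p-1}b\big|\le C\begin{cases} a^{p-2}b^2, & b\le a,\\ a\,b^{p-1}, & a\le b,\end{cases}
\]
valid for all $a,b>0$. When $p\ge2$ (i.e. $n\le6$) the first branch is the Taylor remainder. In general, for $b\le a$ the left side is $O(a^{p-2}b^2)$. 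For $a\le b$, we have $(a+b)^p-b^p=O(ab^{p-1})$, and the remaining terms $a^p$ and $a^{p-1}b$ are both $\le ab^{p-1}$ because $a\le b$ and $p>1$. Since $K$ is bounded and $|\partial_{t_1}\delta_{P_1,t_1}|\le C t_1^{-1}\delta_{P_1,t_1}$ (from the explicit formula in the appendix), the left side of \eqref{A16-1} is bounded by
\[
C t_1^{-1}\Big(\int_{\{b\le a\}}\delta_{P_1,t_1}^{p-1}\delta_{P_2,t_2}^{2}+\int_{\{a\le b\}}\delta_{P_1,t_1}^{2}\delta_{P_2,t_2}^{p-1}\Big).
\]
Both integrands have total homogeneity $p+1=\frac{2n}{n-2}$.

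The second step estimates these two integrals. We pull back to $\RR^n$ as in Lemma \ref{L2}, substitute $z=t_2y$, and switch to the radial variable $r$. As in the proof of Lemma \ref{A2}, after the change of variables $\delta_{P_1,t_1}$ behaves like $T^{-\frac{n-2}{2}}(1+T^{-2}r^2)^{-\frac{n-2}{2}}$ and $\delta_{P_2,t_2}$ like $T^{\frac{n-2}{2}}\cdot T^{-\frac{n-2}{2}}(1+r^2)^{-\frac{n-2}{2}}$, up to the common measure factor. The region $\{a\ge b\}$ is, up to constants, $\{r\gtrsim T\}$ and $\{a\le b\}$ is $\{r\lesssim T\}$, since the two bubbles are comparable at $r\sim T$. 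On $\{r\lesssim T\}$ the integrand $\delta_1^2\delta_2^{p-1}$ becomes $T^{-(n-2)}\cdot r^{n-1}(1+r^2)^{-2}$ times the normalization. Integrating over $1\le r\le T$ gives $\int r^{n-5}\,dr$, and over $r\le1$ a bounded contribution. Symmetrically, on $\{r\gtrsim T\}$ we invert ($r\mapsto T^2/r$, i.e. exchange the roles of the poles) and get the same type of bound.

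The delicate part of the lemma is the exponent bookkeeping, and that is where I expect the main obstacle. The factor attached to the ``small'' bubble has power $2$ in one region but power $p-1=\frac{4}{n-2}$ in the other. Lemma \ref{L2} with $\theta=\min\{\alpha,\beta\}$ gives the rate $T^{-\theta(n-2)/2}$ when $\theta<\frac{n}{n-2}$ and $T^{-n/2}\ln T$ when $\theta=\frac{n}{n-2}$. Applied naively with $\theta=p-1$ or $\theta=2$, it yields $T^{-2}$ or $T^{-(n-2)}$, neither of which is the claimed $T^{-n/2}\ln T$ in general.

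I would resolve this by sharpening the pointwise inequality to the symmetric bound
\[
C\,(ab)^{\frac{p+1}{2}}=C\,(ab)^{\frac{n}{n-2}},
\]
used where the bubbles are comparable, namely in the annulus $T^{1/2}\lesssim r\lesssim T^{3/2}$ in suitable scaling. Away from that annulus, one bubble dominates by a polynomial factor and the Taylor-type bounds gain extra decay. With the exponent $\theta=\frac{n}{n-2}$, Lemma \ref{L2} gives exactly $T^{-n/2}\ln T$, the logarithm arising from the borderline integral $\int_1^T r^{-1}\,dr$.

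So the plan is to interpolate the pointwise bound between the two branches. Where the geometric-mean bound is not directly available, I will verify by direct radial integration in each of the regions $r\le1$, $1\le r\le T$, $r\ge T$ that the contribution is $O(T^{-n/2}\ln T)$. Checking this region by region, separately for $n=4,5,6$ and for $n\ge7$, is the step I expect to require the most care.
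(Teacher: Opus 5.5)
Your proposal has a genuine gap; in fact there are three related problems.

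\emph{The branch inequality fails for $n\ge 7$.} Your first-step inequality is false when $p=\frac{n+2}{n-2}<2$. For $b\le a$, the term $b^{p}$ is not $O(a^{p-2}b^{2})$: the ratio is $(a/b)^{2-p}$, which is unbounded. For $a\le b$, the term $p\,a^{p-1}b$ is not $O(ab^{p-1})$: the ratio is $(b/a)^{2-p}$. Your justification ``because $a\le b$ and $p>1$'' only works for $p\ge 2$. For $p<2$ the correct branches are $b^{p}$ and $a^{p-1}b$ respectively.

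\emph{The crossover is misplaced.} The two bubbles are comparable at $r\sim T^{1/2}$, not at $r\sim T$. In the variable $r=t_2|y|$ one has $\delta_{P_1,t_1}/\delta_{P_2,t_2}=\big((1+r^2)/(T(1+T^{-2}r^2))\big)^{\frac{n-2}{2}}$, which equals $1$ exactly when $r^2=T$. Integrating $\delta_{P_1,t_1}^2\delta_{P_2,t_2}^{p-1}$ all the way to $r\sim T$ is what produced your spurious rate $T^{-2}$. Cut off at $r\sim T^{1/2}$, it gives $T^{-(n-2)}\int_1^{T^{1/2}}r^{n-5}\,dr\lesssim T^{-n/2}\ln T$.

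\emph{The resolution is not carried out.} Your last two paragraphs only announce a fix (``I will verify \dots''). Moreover, $C(ab)^{\frac{p+1}{2}}$ cannot bound the bracket: the bracket is homogeneous of degree $p$, not $p+1$.

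The missing idea removes any need for region splitting or dimension-by-dimension checks. For $1<p\le 3$, every correct branch bound is dominated on its own region by $a^{\frac{p-1}{2}}b^{\frac{p+1}{2}}$.
\begin{itemize}
\item For $a\le b$: the bounds $a^{p-1}b$ (when $p\le 2$) and $ab^{p-1}$ (when $p\ge 2$) equal it times $(a/b)^{\frac{p-1}{2}}$ and $(a/b)^{\frac{3-p}{2}}$ respectively.
\item For $a>b$: the bounds $b^{p}$ (when $p\le 2$) and $a^{p-2}b^{2}$ (when $p\ge 2$) equal it times $(b/a)^{\frac{p-1}{2}}$ and $(b/a)^{\frac{3-p}{2}}$ respectively.
\end{itemize}
All four factors are $\le 1$. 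Hence
\[
\Big|(a+b)^{\frac{n+2}{n-2}}-a^{\frac{n+2}{n-2}}-b^{\frac{n+2}{n-2}}-\tfrac{n+2}{n-2}\,a^{\frac{4}{n-2}}b\Big|\le C(n)\,a^{\frac{2}{n-2}}b^{\frac{n}{n-2}}\qquad\text{for all } a,b\ge 0,
\]
which is the paper's inequality \eqref{IneqX}. It becomes the symmetric $(ab)^{\frac{n}{n-2}}$ only after multiplying by the extra factor coming from $|\partial_{t_i}\delta_{P_i,t_i}|\le C t_i^{-1}\delta_{P_i,t_i}$. The integrand is then $t_i^{-1}(\delta_{P_i,t_i}\delta_{P_j,t_j})^{\frac{n}{n-2}}$. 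Lemma \ref{L2} with $\alpha=\beta=\theta=\frac{n}{n-2}$ gives $C t_i^{-1}T^{-n/2}\ln T$ in all dimensions at once; this is the paper's proof.

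Your region-by-region route can be repaired using the corrected branches and the crossover at $r\sim T^{1/2}$. It then even yields $T^{-n/2}$ without the logarithm for $n\ge 5$. As written, however, it does not prove the lemma.
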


\begin{proof}
First, recall that 
\begin{equation}
\Big|\frac{\partial \delta_{P_{i},t_{i}}}{\partial t_{i}}\Big| \leq C(n)t_i^{-1} \delta_{P_i,t_i}.
    \label{A16-P1}
\end{equation}
Next, note the inequality 
\[
|(a + b)^p - a^p - b^p  - p a^{p-1} b| 
    \leq \begin{cases}
    C(p)a^{\min\{1,p-1\}}b^{\max\{1,p-1\}} & \text{ for } a \leq b,\\
    C(p)a^{\max\{0,p-2\}}b^{\min\{2,p\}} & \text{ for } a > b
\end{cases} 
\]
for $a, b \geq 0$, $p \geq 1$. In particular, for $p = \frac{n+2}{n-2} \in (1,3]$, we have
\begin{equation}
\Big|(a + b)^{\frac{n+2}{n-2}} - a^{\frac{n+2}{n-2}} - b^{\frac{n+2}{n-2}}  - \frac{n+2}{n-2} a^{\frac{4}{n-2}} b\Big| 
    \leq C(n)a^{\frac{2}{n-2}}b^{\frac{n}{n-2}}.
    \label{IneqX}
\end{equation}
Consequently,
\begin{equation}
\Big|(\sum\limits_{l=1}^{2}\bar{\alpha}_{l}\delta_{P_{l},t_{l}})^{\frac{n+2}{n-2}}-\sum\limits_{l=1}^{2}(\bar{\alpha}_{l}\delta_{P_{l},t_{l}})^{\frac{n+2}{n-2}}-\frac{n+2}{n-2}(\bar{\alpha}_{i}\delta_{P_{i},t_{i}})^{\frac{4}{n-2}}(\bar{\alpha}_{j}\delta_{P_{j},t_{j}})\Big|  \leq    O(1)\delta_{P_{i},t_{i}}^{\frac{2}{n-2}} \delta_{P_{j},t_{j}}^{\frac{n}{n-2}} . 
    \label{A16-P2}
\end{equation}
The conclusion follows from \eqref{A16-P1}, \eqref{A16-P2} and Lemma \ref{L2}. 
\end{proof}

\begin{lem}\label{A17}
There exists a positive constant $C=C(n,K)$ depending on $n$ and $K$ such that if $t_{1}$, $t_{2}>C$, we have that
\begin{align}
&\int_{\SSphere^{n}}K\Big(\sum\limits_{l=1}^{2}\bar{\alpha}_{l}\delta_{P_{l},t_{l}}\Big)^{\frac{n+2}{n-2}}\delta_{P_{i},t_{i}}\nonumber\\
&\quad=\frac{c_{0}}{c(n)}\bar{\alpha}_{i}+\frac{2^{n+1}n(n-1)}{n-2}|\SSphere^{n-1}|\bar{\alpha}_{j}(t_{1}t_{2})^{-\frac{n-2}{2}}  + \frac{2^{2n-3}}{n-1}|\SSphere^{n-1}|\,a_{i}\bar{\alpha}_{i}^{\frac{n+2}{n-2}}t_{i}^{-(n-2)}\nonumber\\
&\quad\quad+O(1)t_{i}^{-\mu_{i}}
    +O(1)(t_{1}t_{2})^{-\frac{n}{2}}\ln(t_{1}t_{2})
    \nonumber\\
&\quad\quad + O(1)(t_{1}t_{2})^{-\frac{n-2}{2}}\begin{cases}
\sum\limits_{l=1}^{2}t_{l}^{-2}\ln t_{l}& \text{ if }n=4,\\
\sum\limits_{l=1}^{2}t_{l}^{-2}&\text{ if }n\geq5,
\end{cases}\label{hongbing}
\\
&
\int_{\SSphere^{n}}K\Big(\sum\limits_{l=1}^{2}\bar{\alpha}_{l}\delta_{P_{l},t_{l}}\Big)^{\frac{4}{n-2}}\delta_{P_{i},t_{i}}^{2} =\frac{c_{0}}{c(n)}+O(1)t_{i}^{-(n-2)}
    + O(1)(t_{1}t_{2})^{-\frac{n-2}{2}} .
    \label{hongbing-}
\end{align}
and, for $i \neq j$,
\begin{align}
&\int_{\SSphere^{n}}K\Big(\sum\limits_{l=1}^{2}\bar{\alpha}_{l}\delta_{P_{l},t_{l}}\Big)^{\frac{4}{n-2}}\delta_{P_{i},t_{i}}\delta_{P_{j},t_{j}}\nonumber\\
&\quad= 2^{n+1}(n-1)|\SSphere^{n-1}|(t_{1}t_{2})^{-\frac{n-2}{2}} \times\nonumber\\
&\quad\quad \times \left(1+\begin{cases}
O((t_{1}t_{2})^{-1}\ln(t_{1}t_{2})+\sum\limits_{l=1}^{2}t_{l}^{-2}\ln t_{l}) &\text{ if }n=4,\\
O((t_{1}t_{2})^{-1}\ln(t_{1}t_{2})+\sum\limits_{l=1}^{2}t_{l}^{-2}) &\text{ if }n\geq5
\end{cases}\right).\label{hongbing--}
\end{align}
\end{lem}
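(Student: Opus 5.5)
The plan is to expand the nonlinearity $\big(\sum_l\bar\alpha_l\delta_{P_l,t_l}\big)^{q}$, with $q=\frac{n+2}{n-2}$ or $q=\frac{4}{n-2}$, around whichever bubble is dominant, and to reduce every term to quantities already computed: Lemma \ref{R1} for $\int K\delta_{P_i,t_i}^{2n/(n-2)}$, Lemma \ref{R2} for $\int K\delta_{P_i,t_i}^{(n+2)/(n-2)}\delta_{P_j,t_j}$, and Lemma \ref{L2} for the mixed error integrals. (In Lemma \ref{R2} the exponent on $\delta_{P_i,t_i}$ should be read as $\frac{n+2}{n-2}$, as in its proof.) Throughout I write $\delta_l=\delta_{P_l,t_l}$ and $T=t_1t_2$. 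I will repeatedly use the normalization
\[
\bar\alpha_l^{\frac{4}{n-2}}K_l=R_0=n(n-1),\qquad c(n)\,\frac{4n(n-1)}{n(n-2)}=1 .
\]
I will also use the region splitting $\{\delta_i\ge\delta_j\}\cup\{\delta_i<\delta_j\}$. On the first region, \eqref{IneqX}-type expansions are applied with $a=\bar\alpha_i\delta_i$; on the second, with the roles of the bubbles swapped.

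\textbf{Proof of \eqref{hongbing}.} I write $K(\sum_l\bar\alpha_l\delta_l)^{\frac{n+2}{n-2}}\delta_i$ as the sum of $K(\bar\alpha_i\delta_i)^{\frac{n+2}{n-2}}\delta_i$, the cross term $\frac{n+2}{n-2}K(\bar\alpha_i\delta_i)^{\frac4{n-2}}\bar\alpha_j\delta_j\delta_i$, the term $K(\bar\alpha_j\delta_j)^{\frac{n+2}{n-2}}\delta_i$, and a remainder.
\begin{itemize}
\item[(1)] The first term is computed by Lemma \ref{R1}. By the normalization, $\bar\alpha_i^{\frac{n+2}{n-2}}\cdot\frac{4c_0K_i}{n(n-2)}=\frac{c_0}{c(n)}\bar\alpha_i$. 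This produces the leading term, the $a_i t_i^{-(n-2)}$ term, and the $O(t_i^{-\mu_i})$ error.
\item[(2)] The second and third terms are computed by Lemma \ref{R2}, applied once with the roles $(i,j)$ and once with $(j,i)$. Using $\bar\alpha_i^{4/(n-2)}K_i=\bar\alpha_j^{4/(n-2)}K_j=R_0$, they sum to
\[
\Big(\frac{n+2}{n-2}+1\Big)R_0\,\frac{2^n}{n}\,|\SSphere^{n-1}|\,\bar\alpha_j\,T^{-\frac{n-2}{2}}=\frac{2^{n+1}n(n-1)}{n-2}\,|\SSphere^{n-1}|\,\bar\alpha_jT^{-\frac{n-2}{2}},
\]
with the relative errors of Lemma \ref{R2}, which give the last line of \eqref{hongbing}.
\item[(3)] For the remainder, on $\{\delta_i\ge\delta_j\}$ I use \eqref{IneqX} to bound it by $C\delta_i^{\frac{2}{n-2}}\delta_j^{\frac n{n-2}}\cdot\delta_i$. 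On $\{\delta_i<\delta_j\}$ the expansion around $\delta_j$ leaves the missing term $\frac{n+2}{n-2}\delta_j^{\frac4{n-2}}\delta_i$, which is at most $C\delta_j^{\frac{n}{n-2}}\delta_i^{\frac{n}{n-2}}\cdot\delta_i^{-1}$ there, giving the same form. Lemma \ref{L2} with $\theta=\frac n{n-2}$ then bounds the remainder by $O(T^{-n/2}\ln T)$.
\end{itemize}

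\textbf{Proof of \eqref{hongbing-}.} Here $q=\frac{4}{n-2}$ and the main term is $\bar\alpha_i^{q}\int K\delta_i^{2n/(n-2)}=\frac{c_0}{c(n)}+O(t_i^{-(n-2)})$ by Lemma \ref{R1}. For the error I again split regions.
\begin{itemize}
\item[(1)] On $\{\delta_i\ge\delta_j\}$, $|(a+b)^q-a^q|\le Ca^{q-1}b$. This bound is valid for both $q\ge1$ and $q<1$ when $a\ge b$. The error is then at most $\int\delta_i^{\frac{n+2}{n-2}}\delta_j=O(T^{-\frac{n-2}2})$ by \eqref{A2-1}.
\item[(2)] On $\{\delta_i<\delta_j\}$, the whole integrand and the main term are both bounded by $C\delta_j^{q}\delta_i^2\le C\delta_j^{\frac{n+2}{n-2}}\delta_i$, again $O(T^{-\frac{n-2}{2}})$.
\end{itemize}
A naive bound via Lemma \ref{L2} with $\theta=q$ would only give $T^{-2}$, which is too weak for $n\ge7$. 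This is why the region splitting is needed here.

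\textbf{Proof of \eqref{hongbing--}.} On $\{\delta_i\ge\delta_j\}$ I replace $(\sum_l\bar\alpha_l\delta_l)^q$ by $(\bar\alpha_i\delta_i)^q$, and symmetrically on the other region.
\begin{itemize}
\item[(1)] The leading parts are $R_0\int_{\{\delta_i\ge\delta_j\}}(K/K_i)\delta_i^{\frac{n+2}{n-2}}\delta_j$ plus its mirror. Extending each integral to all of $\SSphere^n$ costs $\int_{\{\delta_j>\delta_i\}}\delta_i^{\frac{n+2}{n-2}}\delta_j\le\int\delta_i^{\frac n{n-2}}\delta_j^{\frac n{n-2}}=O(T^{-n/2}\ln T)$ by Lemma \ref{L2}.
\item[(2)] Lemma \ref{R2} then gives $2R_0\frac{2^n}{n}|\SSphere^{n-1}|T^{-\frac{n-2}2}=2^{n+1}(n-1)|\SSphere^{n-1}|T^{-\frac{n-2}2}$, with $K-K_l$ errors controlled exactly as in \eqref{R2-P}. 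This produces the $t_l^{-2}$ and $t_l^{-2}\ln t_l$ terms.
\item[(3)] The replacement error is at most $C\delta_i^{q-1}\delta_j\cdot\delta_i\delta_j=C\delta_i^{\frac{4}{n-2}}\delta_j^2$ on $\{\delta_i\ge\delta_j\}$. Since $\delta_j\le\delta_i$ there, this is at most $C\delta_i^{\frac{n}{n-2}}\delta_j^{\frac{n}{n-2}}\cdot\delta_j^{\frac{n-4}{n-2}}\delta_i^{-\frac{n-4}{n-2}}\le C\delta_i^{\frac n{n-2}}\delta_j^{\frac n{n-2}}$. Lemma \ref{L2} then gives $O(T^{-n/2}\ln T)=T^{-\frac{n-2}2}O(T^{-1}\ln T)$.
\end{itemize}

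I expect the main obstacle to be this exponent bookkeeping: ensuring that every cross error lands in Lemma \ref{L2} with $\theta=\frac n{n-2}$, so that it has relative size $T^{-1}\ln T$, rather than at a smaller $\theta$, which would be too weak in high dimensions. The pointwise comparison on each region is what makes this work.
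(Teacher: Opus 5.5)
Your proof is correct and is essentially the paper's argument. The main terms come from Lemma~\ref{R1} and Lemma~\ref{R2} via the normalization $\bar\alpha_l^{4/(n-2)}K_l=R_0$, and every cross remainder is reduced to $\int\delta_{P_i,t_i}^{\frac{n}{n-2}}\delta_{P_j,t_j}^{\frac{n}{n-2}}$ or $\int\delta_{P_i,t_i}^{\frac{n+2}{n-2}}\delta_{P_j,t_j}$ and then estimated by Lemma~\ref{L2}. The only difference is your splitting into $\{\delta_{P_i,t_i}\ge\delta_{P_j,t_j}\}$ and its complement: the paper avoids this by using pointwise inequalities valid for all $a,b\ge0$ (namely \eqref{IneqX}, the concavity bound $|(a+b)^q-a^q|\le a^{q-1}b$ for $0<q\le1$, and \eqref{IneqY} after subtracting both individual powers in \eqref{hongbing--}), so the splitting is harmless but not actually needed, even for \eqref{hongbing-}.
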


\begin{proof}We will only consider $i = 1$ and $j = 2$. Let $T = t_1t_2$.

\medskip
\noindent
\underline{Proof of \eqref{hongbing}.} We split
\begin{align*}
&\int_{\SSphere^{n}}K\Big(\sum\limits_{l=1}^{2}\bar{\alpha}_{l}\delta_{P_{l},t_{l}}\Big)^{\frac{n+2}{n-2}}\delta_{P_{1},t_{1}}\\
   &\quad= \int_{\SSphere^{n}} K  \bar{\alpha}_{1}^{\frac{n+2}{n-2}}  \delta_{P_{1},t_{1}}^{\frac{2n}{n+2}}
            + \int_{\SSphere^{n}} K  \bar{\alpha}_{2}^{\frac{n+2}{n-2}}\delta_{P_{2},t_{2}}^{\frac{n+2}{n-2}}  \delta_{P_{1},t_{1}} 
            + \frac{n+2}{n-2} \int_{\SSphere^{n}} K \bar\alpha_1^{\frac{4}{n-2}} \bar \alpha_2 \delta_{P_{1},t_{1}}^{\frac{n+2}{n-2}}\delta_{P_{2},t_{2}} \\
       &\qquad+ \int_{\SSphere^{n}} K\Big[\Big(\sum\limits_{l=1}^{2}\bar{\alpha}_{l}\delta_{P_{l},t_{l}}\Big)^{\frac{n+2}{n-2}} 
            - \sum\limits_{l=1}^{2}\bar{\alpha}_{l}^{\frac{n+2}{n-2}}\delta_{P_{l},t_{l}}^{\frac{n+2}{n-2}} 
            - \frac{n+2}{n-2} \bar\alpha_1^{\frac{4}{n-2}}\bar \alpha_2 \delta_{P_{1},t_{1}}^{\frac{4}{n-2}}\delta_{P_{2},t_{2}} \Big] \delta_{P_{1},t_{1}}\\
    &\quad=: I_1 + I_2 + I_3 + I_4.
\end{align*}

Using \eqref{R1-1} in Lemma \ref{R1} and recalling that $K_l = n(n-1)\bar\alpha_l^{-\frac{4}{n-2}}$, we have
\begin{equation}
I_1 =  \frac{ c_0  }{ c(n)}  \bar{\alpha}_{1} + \frac{2^{2n-3}}{n-1}  |\mathbb{S}^{n-1}| \bar{\alpha}_{1}^{\frac{n+2}{n-2}} a_1 t_1^{-(n-2)}  
        + O(1) t_1^{-\mu}.
        \label{A17-I1}
\end{equation}
By \eqref{R2-1} in Lemma \ref{R2}, we have
\begin{align}
I_2
    &= 2^n (n-1) |\mathbb{S}^{n-1}| \bar{\alpha}_{2}  T^{-\frac{n-2}{2}}  \nonumber\\
        &\qquad   + O(1) T^{-\frac{n+2}{2}}\ln T + O(1) T^{-\frac{n-2}{2}} \begin{cases}
O(t_{2}^{-2}\ln t_{2})& \text{ if }n=4,\\
O(t_{2}^{-2})& \text{ if }n\geq 5,
\end{cases} \label{A17-I2}\\
I_3
    &= \frac{2^n(n-1)(n+2)}{n-2}    |\mathbb{S}^{n-1}| \bar \alpha_2  T^{-\frac{n-2}{2}}  \nonumber\\
        &\qquad + O(1) T^{-\frac{n+2}{2}}\ln T + O(1) T^{-\frac{n-2}{2}} \begin{cases}
O(t_{1}^{-2}\ln t_{1})& \text{ if }n=4,\\
O(t_{1}^{-2})& \text{ if }n\geq 5.
\end{cases} \label{A17-I3}
\end{align}
Next, using \eqref{IneqX}, we have
\[
\Big| \Big(\sum\limits_{l=1}^{2}\bar{\alpha}_{l}\delta_{P_{l},t_{l}}\Big)^{\frac{n+2}{n-2}} 
            - \sum\limits_{l=1}^{2}\bar{\alpha}_{l}^{\frac{n+2}{n-2}}\delta_{P_{l},t_{l}}^{\frac{n+2}{n-2}} 
            - \frac{n+2}{n-2} \bar\alpha_1^{\frac{4}{n-2}}\bar \alpha_2 \delta_{P_{1},t_{1}}^{\frac{4}{n-2}}\delta_{P_{2},t_{2}} \Big| \leq O(1) \delta_{P_1,t_1}^{\frac{2}{n-2}}\delta_{P_2,t_2}^{\frac{n}{n-2}}.
\]
Thus, by Lemma \ref{L2},
\[
|I_4|
    \leq O(1) T^{-\frac{n}{2}}\ln T.
\]
Estimate \eqref{hongbing} is readily seen.

\medskip
\noindent
\underline{Proof of \eqref{hongbing-}.} We split
\begin{align}
\int_{\SSphere^{n}}K\Big(\sum\limits_{l=1}^{2}\bar{\alpha}_{l}\delta_{P_{l},t_{l}}\Big)^{\frac{4}{n-2}}\delta_{P_{1},t_{1}}^{2}
    &= \int_{\SSphere^{n}}K \bar{\alpha}_{1}^{\frac{4}{n-2}} \delta_{P_{1},t_{1}}^{\frac{2n}{n-2}}\nonumber\\
      &\qquad + \int_{\SSphere^{n}} K\Big[\Big(\sum\limits_{l=1}^{2}\bar{\alpha}_{l}\delta_{P_{l},t_{l}}\Big)^{\frac{4}{n-2}} - \bar{\alpha}_{1}^{\frac{4}{n-2}} \delta_{P_{1},t_{1}}^{\frac{4}{n-2}}\Big]\delta_{P_{1},t_{1}}^{2}\nonumber\\
     &=: \bar\alpha_1^{-1} I_1 + I_5 .
     \label{A17-Split2}
\end{align}
Using the inequality
\[
|(a + b)^p - a^p| \leq \begin{cases} 
    C_p (a^{p-1}b + b^p) & \text{ for } p > 1,\\
    a^{p-1}b & \text{ for } 0 < p \leq 1,
    \end{cases}
\]
where $a, b > 0$, we have
\[
\Big|\Big(\sum\limits_{l=1}^{2}\bar{\alpha}_{l}\delta_{P_{l},t_{l}}\Big)^{\frac{4}{n-2}} - \bar{\alpha}_{1}^{\frac{4}{n-2}} \delta_{P_{1},t_{1}}^{\frac{4}{n-2}}\Big|
    \leq \begin{cases}
        O(1) \delta_{P_{1},t_{1}}^{\frac{6-n}{n-2}} \delta_{P_{2},t_{2}}  + O(1) \delta_{P_{2},t_{2}}^{\frac{4}{n-2}} & \text{ for } n = 4, 5,\\
        O(1) \delta_{P_{1},t_{1}}^{\frac{6-n}{n-2}}\delta_{P_{2},t_{2}}  & \text{ for } n \ge 6.
    \end{cases}
\]
Thus, by Lemma \ref{L2},
\[
|I_5| \leq  
O(1)T^{-\frac{n-2}{2}} .
\]
Insert this together with estimate \eqref{A17-I1} for $I_1$ into \eqref{A17-Split2}, we get \ref{hongbing-}.

\medskip
\noindent
\underline{Proof of \eqref{hongbing--}.} We split
\begin{align}
\int_{\SSphere^{n}}K\Big(\sum\limits_{l=1}^{2}\bar{\alpha}_{l}\delta_{P_{l},t_{l}}\Big)^{\frac{4}{n-2}}\delta_{P_{1},t_{1}}\delta_{P_{2},t_{2}}
   & = \int_{\SSphere^{n}}K \bar{\alpha}_{1}^{\frac{4}{n-2}} \delta_{P_{1},t_{1}}^{\frac{n+2}{n-2}}\delta_{P_{2},t_{2}}
        + \int_{\SSphere^{n}}K \bar{\alpha}_{2} \delta_{P_{1},t_{1}}\delta_{P_{2},t_{2}}^{\frac{n+2}{n-2}}\nonumber\\
        &\quad + \int_{\SSphere^{n}}K\Big[\Big(\sum\limits_{l=1}^{2}\bar{\alpha}_{l}\delta_{P_{l},t_{l}}\Big)^{\frac{4}{n-2}} - \sum\limits_{l=1}^{2}\bar{\alpha}_{l}^{\frac{4}{n-2}}\delta_{P_{l},t_{l}}^{\frac{4}{n-2}}\Big]\delta_{P_{1},t_{1}}\delta_{P_{2},t_{2}}\nonumber\\
    &=: \bar\alpha_2^{-1} (I_2 + I_3) + I_6.
    \label{A17-Split3}
\end{align}
To estimate $I_6$, we note the inequality 
\[
|(a+b)^p - a^p - b^p| \leq C(p) a^{\frac{p}{2}} b^{\frac{p}{2}}
\]
for $a,b \geq 0$, $0 < p \leq 2$. In particular, for $p = \frac{4}{n-2}$, we have
\begin{equation}
   |(a+b)^{\frac{4}{n-2}} - a^{\frac{4}{n-2}} - b^{\frac{4}{n-2}}| \leq C(p) a^{\frac{2}{n-2}} b^{\frac{2}{n-2}}, 
    \label{IneqY}
\end{equation}
which implies
\begin{equation}
\Big|\Big(\sum\limits_{l=1}^{2}\bar{\alpha}_{l}\delta_{P_{l},t_{l}}\Big)^{\frac{4}{n-2}} - \sum\limits_{l=1}^{2}\bar{\alpha}_{l}^{\frac{4}{n-2}}\delta_{P_{l},t_{l}}^{\frac{4}{n-2}}\Big| \leq O(1) \delta_{P_{1},t_{1}}^{\frac{2}{n-2}}\delta_{P_{2},t_{2}}^{\frac{2}{n-2}}.
    \label{A17-Aux1}
\end{equation}
Thus, by Lemma \ref{L2},
\[
|I_6| \leq O(1) T^{-\frac{n}{2}} \ln T.
\]
Insert this together with estimates \eqref{A17-I2} and \eqref{A17-I3} for $I_2$ and $I_3$ into \eqref{A17-Split3}, we obtain \eqref{hongbing--}.
\end{proof}

\begin{lem}\label{Axd}
There exists a positive constant $C=C(n,K)$ depending on $n$ and $K$ such that if $t_{1}$, $t_{2}>C$, we have that
\begin{equation}
\int_{\SSphere^{n}}K\Big(\sum\limits_{l=1}^{2}\bar{\alpha}_{l}\delta_{P_{l},t_{l}}\Big)^{\frac{n+2}{n-2}}\frac{\delta_{P_{i},t_{i}}}{\partial t_i}
    = O(1)t_i^{-1}\big(t_i^{-(n-2)} + (t_1t_2)^{-\frac{n-2}{2}}\big).
\label{Axd-1}
\end{equation}
\end{lem}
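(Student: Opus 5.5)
We prove this the same way as \eqref{hongbing}. Note that the integrand contains $\frac{\partial\delta_{P_i,t_i}}{\partial t_i}$; the statement's $\frac{\delta_{P_{i},t_{i}}}{\partial t_i}$ is a typo. Take $i=1$, $j=2$ and $T=t_1t_2$, and split
\[
\Big(\sum_{l=1}^{2}\bar\alpha_l\delta_{P_l,t_l}\Big)^{\frac{n+2}{n-2}}
=\bar\alpha_1^{\frac{n+2}{n-2}}\delta_{P_1,t_1}^{\frac{n+2}{n-2}}
+\bar\alpha_2^{\frac{n+2}{n-2}}\delta_{P_2,t_2}^{\frac{n+2}{n-2}}
+\frac{n+2}{n-2}\bar\alpha_1^{\frac{4}{n-2}}\bar\alpha_2\delta_{P_1,t_1}^{\frac{4}{n-2}}\delta_{P_2,t_2}
+\mathcal{R},
\]
where $\mathcal R$ is the remainder controlled by \eqref{IneqX}. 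This gives four integrals $J_1,\dots,J_4$ against $K\,\partial_{t_1}\delta_{P_1,t_1}$.

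\textbf{The four pieces.}
\begin{itemize}
\item[$J_1$.] By \eqref{A3-1} in Lemma \ref{A3}, $J_1=O(1)\big(t_1^{-(n-1)}+t_1^{-(\mu_1+1)}\big)=O(1)t_1^{-1}t_1^{-(n-2)}$, since $\mu_1>n-2$. For this bound we only need $|K-K_1|\le C\min\{1,(t_1r)^{-(n-2)}\}$ in \eqref{A3-P1}; the precise expansion of Lemma \ref{A3} is not required.
\item[$J_2$.] By \eqref{A4-1} in Lemma \ref{A4} (with the roles of $i,j$ read appropriately), $J_2=O(1)t_1^{-1}T^{-\frac{n-2}{2}}$.
\item[$J_3$.] By \eqref{A4-2} in Lemma \ref{A4}, $J_3=O(1)t_1^{-1}T^{-\frac{n-2}{2}}$.
\item[$J_4$.] Use \eqref{A16-P1}, $|\partial_{t_1}\delta_{P_1,t_1}|\le Ct_1^{-1}\delta_{P_1,t_1}$, together with \eqref{A16-P2}, $K\le C$, and Lemma \ref{L2} with exponents $\frac{n}{n-2},\frac{n}{n-2}$. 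This gives $|J_4|\le Ct_1^{-1}T^{-\frac n2}\ln T$, which is absorbed into $t_1^{-1}T^{-\frac{n-2}{2}}$ once $T\ge e$.
\end{itemize}
This is exactly Lemma \ref{A16}.

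\textbf{Main obstacle.} The only point needing care is making sure the piece $J_2$ is covered by Lemma \ref{A4}. Equation \eqref{A4-1} integrates $\delta_{P_i}^{\frac{n+2}{n-2}}$ against $\partial_{t_j}\delta_{P_j}$ with $j\ne i$, and with the indices swapped this is precisely $J_2$. The resulting bound is $O(t_1^{-1}T^{-\frac{n-2}{2}})$.

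If one prefers to avoid Lemma \ref{A4}, there is a crude alternative. Bound $|J_2|\le Ct_1^{-1}\int\delta_{P_2,t_2}^{\frac{n+2}{n-2}}\delta_{P_1,t_1}$ and apply Lemma \ref{L2} with $\theta=1<\frac{n}{n-2}$, which gives $Ct_1^{-1}T^{-\frac{n-2}{2}}$. The same crude bound handles $J_3$.

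Adding the four estimates gives \eqref{Axd-1}.
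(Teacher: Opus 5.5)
Your proposal is correct and follows the paper's proof essentially verbatim: you use the same four-term splitting of $(\sum_l\bar\alpha_l\delta_{P_l,t_l})^{\frac{n+2}{n-2}}$, and you bound the pieces by \eqref{A3-1}, \eqref{A4-1} (with the indices swapped), \eqref{A4-2}, and \eqref{A16-1}, respectively. Your reading of the integrand as $\partial\delta_{P_i,t_i}/\partial t_i$ is right, and your cruder alternative for the two cross terms, via Lemma \ref{L2} with $\theta=1$, also works.
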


\begin{proof}
We will only consider $i = 1$. We split the integral on the left hand side of \eqref{Axd-1} as
\begin{align*}
&\int_{\SSphere^{n}}K \bar{\alpha}_{1}^{\frac{n+2}{n-2}}\delta_{P_{1},t_{1}}^{\frac{n+2}{n-2}} \frac{\delta_{P_{1},t_{1}}}{\partial t_1}
        + \int_{\SSphere^{n}}K \bar{\alpha}_{2}^{\frac{n+2}{n-2}}\delta_{P_{2},t_{2}}^{\frac{n+2}{n-2}} \frac{\delta_{P_{1},t_{1}}}{\partial t_1}
        + \frac{n+2}{n-2}\int_{\SSphere^{n}}K \bar{\alpha}_{1}^{\frac{4}{n-2}} \bar \alpha_2 \delta_{P_{1},t_{1}}^{\frac{4}{n-2}} \delta_{P_2,t_2}\frac{\delta_{P_{1},t_{1}}}{\partial t_1}\\
        &\qquad
        + \int_{\SSphere^{n}}K \Big[(\sum\limits_{l=1}^{2}\bar{\alpha}_{l}\delta_{P_{l},t_{l}})^{\frac{n+2}{n-2}}-\sum\limits_{l=1}^{2}(\bar{\alpha}_{l}\delta_{P_{l},t_{l}})^{\frac{n+2}{n-2}}-\frac{n+2}{n-2}(\bar{\alpha}_{i}\delta_{P_{i},t_{i}})^{\frac{4}{n-2}}(\bar{\alpha}_{j}\delta_{P_{j},t_{j}})\Big]\frac{\delta_{P_{1},t_{1}}}{\partial t_1}.
\end{align*}
These four integrals are estimated respectively by \eqref{A3-1} in Lemma \ref{A3}, \eqref{A4-1} and \eqref{A4-2} in Lemma \ref{A4}, and \eqref{A16-1} in Lemma \ref{A16}. The conclusion follows.
\end{proof}

\begin{lem}\label{Ayd}
There exists a positive constant $C=C(n,K)$ depending on $n$ and $K$ such that if $t_{1}$, $t_{2}>C$, we have that
\begin{equation}
\int_{\SSphere^{n}}K\Big(\sum\limits_{l=1}^{2}\bar{\alpha}_{l}\delta_{P_{l},t_{l}}\Big)^{\frac{4}{n-2}}\delta_{P_{i},t_{i}} \frac{\delta_{P_{i},t_{i}}}{\partial t_i}
    = \begin{cases} 
 O(1) t_i^{-1} \big[t_i^{-(n-2)} + (t_1t_2)^{-\frac{n-2}{2}}\big] & \text{ if } n = 4,5,\\
 O(1) t_i^{-1} \big[t_i^{-(n-2)} + (t_1t_2)^{-\frac{n+2}{4}}\big] & \text{ if } n \geq 6.
\end{cases}
\label{Ayd-1}
\end{equation}
\end{lem}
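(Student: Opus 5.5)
We prove \eqref{Ayd-1} only for $i=1$ and split the integrand in the same way as in the proof of Lemma \ref{Axd}, but with the exponent $\frac{4}{n-2}$ in place of $\frac{n+2}{n-2}$:
\begin{align*}
&\int_{\SSphere^{n}}K\Big(\sum_{l=1}^{2}\bar{\alpha}_{l}\delta_{P_{l},t_{l}}\Big)^{\frac{4}{n-2}}\delta_{P_{1},t_{1}}\frac{\partial\delta_{P_{1},t_{1}}}{\partial t_1}
=\bar\alpha_1^{\frac{4}{n-2}}\int_{\SSphere^{n}}K\delta_{P_{1},t_{1}}^{\frac{n+2}{n-2}}\frac{\partial\delta_{P_{1},t_{1}}}{\partial t_1}\\
&\qquad+\int_{\SSphere^{n}}K\Big[\Big(\sum_{l=1}^{2}\bar{\alpha}_{l}\delta_{P_{l},t_{l}}\Big)^{\frac{4}{n-2}}-\bar\alpha_1^{\frac{4}{n-2}}\delta_{P_{1},t_{1}}^{\frac{4}{n-2}}\Big]\delta_{P_{1},t_{1}}\frac{\partial\delta_{P_{1},t_{1}}}{\partial t_1}=:J_1+J_2 .
\end{align*}

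\emph{The term $J_1$.} By \eqref{A3-1} in Lemma \ref{A3}, and since $\mu_1>n-2$, we get $J_1=O(1)t_1^{-(n-1)}=O(1)t_1^{-1}t_1^{-(n-2)}$. This is the first term on the right-hand side of \eqref{Ayd-1} in every dimension.

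\emph{The term $J_2$.} Using \eqref{A16-P1}, $|\partial_{t_1}\delta_{P_1,t_1}|\le C t_1^{-1}\delta_{P_1,t_1}$, it suffices to show
\[
\int_{\SSphere^n}\Big|\Big(\sum_l\bar\alpha_l\delta_{P_l,t_l}\Big)^{\frac{4}{n-2}}-\bar\alpha_1^{\frac{4}{n-2}}\delta_{P_1,t_1}^{\frac{4}{n-2}}\Big|\delta_{P_1,t_1}^2\le
\begin{cases}O(1)(t_1t_2)^{-\frac{n-2}{2}},& n=4,5,\\ O(1)(t_1t_2)^{-\frac{n+2}{4}},& n\ge6.\end{cases}
\]

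\emph{Case $n=4,5$.} We reuse the pointwise bound from the proof of \eqref{hongbing-}. It controls the bracket by $\delta_1^{\frac{6-n}{n-2}}\delta_2+\delta_2^{\frac{4}{n-2}}$, where $\delta_l=\delta_{P_l,t_l}$. Multiplying by $\delta_1^2$ gives the two integrals $\int\delta_1^{\frac{n+2}{n-2}}\delta_2$ and $\int\delta_1^{2}\delta_2^{\frac{4}{n-2}}$. The exponents in each pair sum to $\frac{2n}{n-2}$, so Lemma \ref{L2} applies.
\begin{itemize}
\item In the first integral $\theta=1$, giving $(t_1t_2)^{-\frac{n-2}{2}}$.
\item In the second integral $\theta=\min\{2,\frac{4}{n-2}\}\ge1$. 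Here $\frac{4}{n-2}<\frac{n}{n-2}$ for $n=5$, and $\theta=2=\frac{n}{n-2}$ for $n=4$, where there is a harmless extra logarithm. In both cases this is bounded by $(t_1t_2)^{-\frac{n-2}{2}}$.
\end{itemize}

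\emph{Case $n\ge6$.} This is the main step. The crude bound $\delta_1^{\frac{6-n}{n-2}}\delta_2$ only gives $(t_1t_2)^{-\frac{n-2}{2}}$, which is too weak, so we split according to which bubble dominates.
\begin{itemize}
\item Where $\delta_2\le\delta_1$, concavity ($p=\frac{4}{n-2}\le1$) gives a bracket of size at most $\delta_1^{p-1}\delta_2$. We interpolate this against the trivial bound $\delta_2^{p}$: since $\delta_2/\delta_1\le1$, we have $\delta_1^{p-1}\delta_2\le\delta_1^{p-s}\delta_2^{s}$ for any $s\in[p,1]$.
\item Where $\delta_2>\delta_1$, the bracket is at most $C\delta_2^{p}\le C\delta_1^{p-s}\delta_2^{s}$, for the same range of $s$.
\end{itemize}
The integrand is then bounded by $\delta_1^{2+p-s}\delta_2^{s}$, whose exponents again sum to $\frac{2n}{n-2}$. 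We choose $s=\frac{n+2}{2(n-2)}$; this lies in $[p,1]$ exactly when $n\ge6$. Then $\theta=s<\frac{n}{n-2}$, and Lemma \ref{L2} gives $(t_1t_2)^{-\frac{s(n-2)}{2}}=(t_1t_2)^{-\frac{n+2}{4}}$.

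Collecting $J_1$ and $J_2$ yields \eqref{Ayd-1}. The case $i=2$ is identical.
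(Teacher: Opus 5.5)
Your proposal is correct and follows essentially the same route as the paper: you use the same split into the pure-bubble term, bounded via \eqref{A3-1}, and the remainder, bounded pointwise and then by Lemma \ref{L2}, with the same pointwise bounds for $n=4,5$. For $n\ge 6$, your region-by-region interpolation with $s=\frac{n+2}{2(n-2)}$ is a rephrasing of the paper's estimate $\min\{ab^{p},a^{p}b\}\le (ab^{p})^{1/2}(a^{p}b)^{1/2}$, and it yields the identical integrand $\delta_{P_1,t_1}^{\frac{3n-2}{2(n-2)}}\delta_{P_2,t_2}^{\frac{n+2}{2(n-2)}}$.
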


\begin{proof}
We will only consider $i = 1$. We split the integral on the left hand side of \eqref{Axd-1} as
\begin{align*}
\int_{\SSphere^{n}}K \bar{\alpha}_{1}^{\frac{4}{n-2}}\delta_{P_{1},t_{1}}^{\frac{n+2}{n-2}} \frac{\delta_{P_{1},t_{1}}}{\partial t_1}
        + \int_{\SSphere^{n}}K \Big[(\sum\limits_{l=1}^{2}\bar{\alpha}_{l}\delta_{P_{l},t_{l}})^{\frac{4}{n-2}}- \bar{\alpha}_{1}^{\frac{4}{n-2}}\delta_{P_{1},t_{1}}^{\frac{4}{n-2}}\Big]\delta_{P_{1},t_{1}}\frac{\delta_{P_{1},t_{1}}}{\partial t_1} 
        =: I_1 + I_2.
\end{align*}
By \eqref{A3-1} in Lemma \ref{A3}, $|I_1| \leq O(t_1^{-(n-1)})$. By the inequality
\begin{equation*}
    |(a+b)^{p}-a^{p}|a\leq \begin{cases}
        C_{p}\min\{ab^{p},a^pb\}&\text{ if }0 < p<1,\\
        C_{p}(ab^{p} + a^{p}b) &\text{ if }p\geq1
    \end{cases}
\end{equation*}
for $a,b >0$, $p>0$, we have
\begin{equation}
    \Big|\Big(\sum\limits_{l=1}^{2}\bar{\alpha}_{l}\delta_{P_{l},t_{l}}\Big)^{\frac{4}{n-2}}-\bar{\alpha}_{1}^{\frac{4}{n-2}}\delta_{P_{1},t_{1}}^{\frac{4}{n-2}}\Big|\delta_{P_{1},t_{1}}
    \leq \begin{cases}
        O(1) \delta_{P_{1},t_{1}} \delta_{P_{2},t_{2}}^{\frac{4}{n-2}} + \delta_{P_{1},t_{1}}^{\frac{4}{n-2}} \delta_{P_{2},t_{2}}&\text{ if }n=4,5,\\
        O(1) \delta_{P_{1},t_{1}}^{\frac{n+2}{2(n-2)}}\delta_{P_{2},t_{2}}^{\frac{n+2}{2(n-2)}}&\text{ if }n\geq 6.
    \end{cases}
    \label{Ayd-P1}
\end{equation}
Recalling \eqref{A16-P1}, we may use Lemma \ref{L2} to obtain
\[
|I_2| \leq \begin{cases} 
 O(1) t_1^{-1} (t_1t_2)^{-\frac{n-2}{2}} & \text{ if } n = 4,5,\\
 O(1) t_1^{-1} (t_1t_2)^{-\frac{n+2}{4}} & \text{ if } n \geq 6.
\end{cases}
\]
The conclusion follows.
\end{proof}

\begin{lem}\label{V1}
If $t_{1}t_{2}\geq e$, we have that for any $v\in H^1_r(\mathbb{S}^n)$,
\begin{equation}\label{V1-1}
 \int_{\SSphere^{n}}K\Big|(\sum\limits_{l=1}^{2}\bar{\alpha}_{l}\delta_{P_{l},t_{l}})^{\frac{4}{n-2}}-\sum\limits_{l=1}^{2}(\bar{\alpha}_{l}\delta_{P_{l},t_{l}})^{\frac{4}{n-2}}\Big|v^{2}\leq O(1)\|v\|^{2}(t_{1}t_{2})^{-1}(\ln(t_{1}t_{2}))^{\frac{2}{n}},
\end{equation}
and
\begin{align}
& \int_{\SSphere^{n}}K\Big|\Big(\sum\limits_{l=1}^{2}\bar{\alpha}_{l}\delta_{P_{l},t_{l}}\Big)^{\frac{4}{n-2}}-\bar{\alpha}_{i}^{\frac{4}{n-2}}\delta_{P_{i},t_{i}}^{\frac{4}{n-2}}\Big|\delta_{P_{i},t_{i}}|v|\nonumber\\ 
&\quad\leq O(1)\|v\|\begin{cases}
(t_{1}t_{2})^{-\frac{n-2}{2}}&\text{ if }n=4,5,\\
(t_{1}t_{2})^{-\frac{n+2}{4}}(\ln(t_{1}t_{2}))^{\frac{n+2}{2n}}&\text{ if }n\geq 6.
\end{cases}\label{V1-2}
\end{align}
\end{lem}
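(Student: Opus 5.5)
We prove \eqref{V1-1} and \eqref{V1-2} by pointwise algebraic inequalities, then H\"older and Sobolev, then the bubble interaction bound of Lemma \ref{L2}. We argue for $i=1$, $j=2$ and set $T=t_1t_2$; the constants $\bar\alpha_l$ and $K$ are bounded above and below.

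For \eqref{V1-1}, inequality \eqref{IneqY} (as in \eqref{A17-Aux1}) gives the pointwise bound
\[
\Big|\Big(\sum_{l}\bar\alpha_l\delta_{P_l,t_l}\Big)^{\frac{4}{n-2}}-\sum_l(\bar\alpha_l\delta_{P_l,t_l})^{\frac{4}{n-2}}\Big|\le O(1)\,\delta_{P_1,t_1}^{\frac{2}{n-2}}\delta_{P_2,t_2}^{\frac{2}{n-2}}.
\]
By H\"older with exponents $\frac{n}{2}$ and $\frac{n}{n-2}$, together with the Sobolev inequality $\|v\|_{L^{2n/(n-2)}}\le C\|v\|$, the left side of \eqref{V1-1} is at most
\[
O(1)\|v\|^2\Big(\int_{\SSphere^n}\delta_{P_1,t_1}^{\frac{n}{n-2}}\delta_{P_2,t_2}^{\frac{n}{n-2}}\Big)^{\frac{2}{n}}.
\]
This integral has exponents $\alpha=\beta=\frac{n}{n-2}$, whose sum is $\frac{2n}{n-2}$. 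It is therefore exactly the borderline case $\theta=\frac{n}{n-2}$ of Lemma \ref{L2}, and is bounded by $O(T^{-n/2}\ln T)$. Raising to the power $\frac{2}{n}$ gives $T^{-1}(\ln T)^{2/n}$, which is \eqref{V1-1}.

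For \eqref{V1-2}, we use the pointwise bound already derived in \eqref{Ayd-P1} and split by dimension.

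When $n\ge6$, the weight is at most $O(1)\delta_{P_1,t_1}^{\frac{n+2}{2(n-2)}}\delta_{P_2,t_2}^{\frac{n+2}{2(n-2)}}$. We apply H\"older with exponents $\frac{2n}{n+2}$ and $\frac{2n}{n-2}$, then Sobolev. This gives the bound $\|v\|\big(\int\delta_{P_1,t_1}^{\frac{n}{n-2}}\delta_{P_2,t_2}^{\frac{n}{n-2}}\big)^{\frac{n+2}{2n}}$. Again by the borderline case of Lemma \ref{L2}, this is $\|v\|\,(T^{-n/2}\ln T)^{\frac{n+2}{2n}}=\|v\|\,T^{-\frac{n+2}{4}}(\ln T)^{\frac{n+2}{2n}}$.

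When $n=4,5$, the weight has two pieces, $\delta_{P_1,t_1}\delta_{P_2,t_2}^{\frac{4}{n-2}}$ and $\delta_{P_1,t_1}^{\frac{4}{n-2}}\delta_{P_2,t_2}$. H\"older with the $L^{\frac{2n}{n+2}}$ norm of each piece, followed by Sobolev, reduces the estimate to bounding two integrals with Lemma \ref{L2}:
\[
\int\delta_{P_1,t_1}^{\frac{2n}{n-2}\cdot\frac{n-2}{n+2}}\delta_{P_2,t_2}^{\frac{8n}{(n-2)(n+2)}}
\quad\text{and}\quad
\int\delta_{P_1,t_1}^{\frac{8n}{(n-2)(n+2)}}\delta_{P_2,t_2}^{\frac{2n}{n+2}}.
\]
In each integral the exponents sum to $\frac{2n}{n-2}$, and the smaller exponent is $\theta=\frac{2n}{n+2}$. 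For $n=4,5$ we have $\theta<\frac{n}{n-2}$, so we are in the non-borderline case of Lemma \ref{L2}. The integrals are therefore $O\big(T^{-\frac{\theta(n-2)}{2}}\big)=O\big(T^{-\frac{n(n-2)}{n+2}}\big)$. Raising to the power $\frac{n+2}{2n}$ gives exactly $T^{-\frac{n-2}{2}}$, which is the claimed bound.

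The only delicate step is the exponent bookkeeping. We must check that the H\"older exponents produce integrands covered by Lemma \ref{L2}, with the exponents summing to $\frac{2n}{n-2}$. We must also correctly identify when the case $\theta=\frac{n}{n-2}$ is reached, since that case produces the logarithms. No cancellation is needed anywhere.
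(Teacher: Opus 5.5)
Your proof is correct and follows the paper's own argument: the paper likewise derives \eqref{V1-1} from the pointwise bound \eqref{A17-Aux1} and \eqref{V1-2} from \eqref{Ayd-P1}, each followed by H\"older, Sobolev and Lemma \ref{L2}. Your exponent bookkeeping supplies the details the paper leaves implicit: the borderline case $\theta=\frac{n}{n-2}$ for \eqref{V1-1} and for $n\ge 6$, and the non-borderline case $\theta=\frac{2n}{n+2}$ for $n=4,5$.
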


\begin{proof} Inequality (\ref{V1-1}) follows from \eqref{A17-Aux1}, H\"older's inequality, Sobolev's inequality and Lemma \ref{L2}. Inequality (\ref{V1-2}) follows from \eqref{Ayd-P1}, H\"older's inequality, Sobolev's inequality and Lemma \ref{L2}.

\end{proof}

\begin{lem}\label{KE1}
There exists a positive constant $C=C(n,K)$ depending on $n$ and $K$ such that if $t_{i}>C$, we have that for any $v\in H^1_r(\mathbb{S}^n)$,
\begin{equation}\label{KE1-1}
\int_{\SSphere^{n}}|K-K_{i}|\delta_{P_{i},t_{i}}^{\frac{4}{n-2}}v^{2} \leq O(1)\|v\|^{2}\begin{cases}
t_{i}^{-2}(\ln t_{i})^{\frac{1}{2}}&\text{ if }n=4,\\
t_{i}^{-2}&\text{ if }n\geq5,
\end{cases}
\end{equation}
and
\begin{equation}\label{KE1-2}
\int_{\SSphere^{n}}|K-K_{i}|\delta_{P_{i},t_{i}}^{\frac{n+2}{n-2}}|v|\leq O(1)\|v\|\begin{cases}
t_{i}^{-(n-2)}&\text{ if }n=4,5\\
t_{i}^{-4}(\ln t_{i})^{\frac{2}{3}}&\text{ if }n=6,\\
t_{i}^{-\frac{n+2}{2}}&\text{ if }n\geq7.
\end{cases}
\end{equation}
\end{lem}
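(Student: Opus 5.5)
We prove \eqref{KE1-1} and \eqref{KE1-2} for $i=1$ only; the case $i=2$ is symmetric. The approach is the same as in the other appendix lemmas: pull back by the stereographic projection \eqref{AA1}, then combine H\"older's inequality, the Sobolev inequality on $\SSphere^n$, and the expansion of $K$ near $P_1$.

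\emph{Bound on $K-K_1$.} From \eqref{higher0X} we have $|K-K_1|\le C|\pi-\theta|^{n-2}$ near $P_1$, and $|K-K_1|\le C$ everywhere. In the $y$-coordinates, where $P_1$ corresponds to $|y|=\infty$, this gives
\[
|K(y)-K_1|\le C\min\{1,|y|^{-(n-2)}\}.
\]

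\emph{Proof of \eqref{KE1-1}.} By H\"older with exponents $\frac n2$ and $\frac{n}{n-2}$, together with Sobolev,
\[
\int_{\SSphere^n}|K-K_1|\delta_{P_1,t_1}^{\frac4{n-2}}v^2\le \Big(\int_{\SSphere^n}|K-K_1|^{\frac n2}\delta_{P_1,t_1}^{\frac{2n}{n-2}}\Big)^{\frac2n}\,C\|v\|^2 .
\]
After the change of variables $z=t_1^{-1}y$, as in Lemma \ref{R1}, the integral in parentheses becomes
\[
2^n|\SSphere^{n-1}|\int_0^\infty \frac{|K(t_1r)-K_1|^{\frac n2}r^{n-1}}{(1+r^2)^n}\,dr .
\]
We split at $r=t_1^{-1}$:
\begin{itemize}
\item On $r\le t_1^{-1}$ the integrand is $O(r^{n-1})$, contributing $O(t_1^{-n})$.
\item On $r\ge t_1^{-1}$ we use $|K(t_1r)-K_1|^{n/2}\le C(t_1r)^{-\frac{n(n-2)}{2}}$. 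The resulting integrand is $t_1^{-\frac{n(n-2)}2}r^{n-1-\frac{n(n-2)}2}$ near $0$, and its exponent is at least $-1$ exactly when $n\le4$.
\end{itemize}
For $n=4$ the exponent is $-1$, so this contributes $t_1^{-4}\ln t_1$. For $n\ge5$ the integral is dominated by $r\sim t_1^{-1}$, so it contributes $O(t_1^{-n})$. Taking the power $\frac2n$ gives $t_1^{-2}(\ln t_1)^{1/2}$ for $n=4$ and $t_1^{-2}$ for $n\ge5$, which is \eqref{KE1-1}.

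\emph{Proof of \eqref{KE1-2}.} Here H\"older uses the dual exponent $\frac{2n}{n+2}$:
\[
\int_{\SSphere^n}|K-K_1|\delta_{P_1,t_1}^{\frac{n+2}{n-2}}|v|\le C\|v\|\Big(\int_{\SSphere^n}|K-K_1|^{\frac{2n}{n+2}}\delta_{P_1,t_1}^{\frac{2n}{n-2}}\Big)^{\frac{n+2}{2n}} .
\]
The same rescaling gives the integral
\[
t_1^{-\frac{2n(n-2)}{n+2}}\int_{t_1^{-1}}^\infty \frac{r^{\,n-1-\frac{2n(n-2)}{n+2}}}{(1+r^2)^n}\,dr+O(t_1^{-n}).
\]
The exponent $n-1-\frac{2n(n-2)}{n+2}$ is $>-1$ when $n<6$, equals $-1$ when $n=6$, and is $<-1$ when $n>6$. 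This produces, respectively, $t_1^{-\frac{2n(n-2)}{n+2}}$, $t_1^{-6}\ln t_1$, and $O(t_1^{-n})$. Raising to the power $\frac{n+2}{2n}$ gives $t_1^{-(n-2)}$, $t_1^{-4}(\ln t_1)^{2/3}$, and $t_1^{-\frac{n+2}2}$, which is \eqref{KE1-2}.

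The only delicate point is the exponent bookkeeping at the thresholds $n=4$ in \eqref{KE1-1} and $n=6$ in \eqref{KE1-2}, where the logarithms appear. Everything else is routine.
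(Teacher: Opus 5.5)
Your proposal is correct and takes essentially the same route as the paper. Both use H\"older plus Sobolev to reduce to the radial integrals $\int_0^\infty |K(t_1r)-K_1|^{p}\,r^{n-1}(1+r^2)^{-n}\,dr$ with $p=\frac n2$ and $p=\frac{2n}{n+2}$, then the bound $|K-K_1|\le C\min\{1,|y|^{-(n-2)}\}$ and a split at $r=t_1^{-1}$, as in the proof of \eqref{R2-P}. Your exponent bookkeeping at the logarithmic thresholds $n=4$ and $n=6$ is right.
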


\begin{proof}
By H\"older's inequality, Sobolev's inequality and a computation as in the proof of Lemma \ref{A2}, we have that 
\begin{equation*}
    \int_{\SSphere^{n}}|K-K_{i}|\delta_{P_{i},t_{i}}^{\frac{4}{n-2}}v^{2}\leq C(n)\|v\|^{2}\left(\int_{0}^{\infty}\frac{|K(t_{i}^{-1}r)-K_{i}|^{\frac{n}{2}}r^{n-1}dr}{(1+r^{2})^{n}}\right)^{\frac{2}{n}},
\end{equation*}
and 
\begin{equation*}
   \int_{\SSphere^{n}}|K-K_{i}|\delta_{P_{i},t_{i}}^{\frac{n+2}{n-2}}|v|\leq C(n)\|v\|\left(\int_{0}^{\infty}\frac{|K(t_{i}^{-1}r)-K_{i}|^{\frac{2n}{n+2}}r^{n-1}dr}{(1+r^{2})^{n}}\right)^{\frac{n+2}{2n}}.
\end{equation*}
Using condition \eqref{higher0X} for $K$, we can argue as in the proof of \eqref{R2-P} to get
\begin{equation*}
\int_{0}^{\infty}\frac{|K(t_{i}^{-1}r)-K_{i}|^{\frac{n}{2}}r^{n-1}dr}{(1+r^{2})^{n}}\leq C(n,K)\begin{cases}
t_{i}^{-4}\ln t_{i}&\text{ if }n=4,\\
t_{i}^{-n}&\text{ if }n\geq 5,
    \end{cases}
\end{equation*}
and
\begin{equation*}
    \int_{0}^{\infty}\frac{|K(t_{i}^{-1}r)-K_{i}|^{\frac{2n}{n+2}}r^{n-1}dr}{(1+r^{2})^{n}}\leq C(n,K)\begin{cases}
        t_{i}^{-\frac{2n(n-2)}{n+2}}&\text{ if }n=4,5,\\
        t_{i}^{-6}\ln t_{i}&\text{ if }n=6,\\
        t_{i}^{-n}&\text{ if }n\geq7.
    \end{cases}
\end{equation*}
Combining the above four inequalities together, we can obtain (\ref{KE1-1}) and (\ref{KE1-2}).
\end{proof}

\begin{lem}\label{KE2}
There exists a positive constant $C=C(n,K)$ depending on $n$ and $K$ such that if $t_{1}$, $t_{2}>C$, we have for any $\varphi\in E_{t}$ that 
\begin{align}\label{niuzhen}
& \int_{\SSphere^{n}}K\Big(\sum\limits_{l=1}^{2}\bar \alpha_l \delta_{P_{l},t_{l}}\Big)^{\frac{n+2}{n-2}}|\varphi|\nonumber\\
    &\qquad \leq O(1)\|\varphi\|\begin{cases}
(t_{1}t_{2})^{-\frac{n-2}{2}}+\sum\limits_{l=1}^{2}t_{l}^{-(n-2)}&\text{ if }n=4,5,\\
(t_{1}t_{2})^{-2}(\ln(t_{1}t_{2}))^{\frac{2}{3}}+\sum\limits_{l=1}^{2}t_{l}^{-4}(\ln t_{l})^{\frac{2}{3}}&\text{ if }n=6,\\
(t_{1}t_{2})^{-\frac{n+2}{4}}(\ln(t_{1}t_{2}))^{\frac{n+2}{2n}}+\sum\limits_{l=1}^{2}t_{l}^{-\frac{n+2}{2}}&\text{ if }n\geq 7.\end{cases}
\end{align}
\end{lem}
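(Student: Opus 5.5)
The plan is to split the integrand into pieces that can each be estimated with $|\varphi|$ kept inside the integral. Every pointwise bound below is an inequality between nonnegative quantities, so the absolute value on $\varphi$ is harmless. Write $w:=\sum_{l=1}^{2}\bar\alpha_l\delta_{P_l,t_l}$ and $p=\frac{n+2}{n-2}$. I would decompose
\[
Kw^{p}=\sum_{i=1}^{2}K_i\bar\alpha_i^{p}\delta_{P_i,t_i}^{p}+\sum_{i=1}^{2}(K-K_i)\bar\alpha_i^{p}\delta_{P_i,t_i}^{p}+K\Big(w^{p}-\sum_{l=1}^{2}\bar\alpha_l^{p}\delta_{P_l,t_l}^{p}\Big).
\]

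The second and third groups carry the stated rates. For the second group, \eqref{KE1-2} in Lemma \ref{KE1} applies verbatim, since it is already stated with $|v|$. It gives $\|\varphi\|\,t_i^{-(n-2)}$ for $n=4,5$, $\|\varphi\|\,t_i^{-4}(\ln t_i)^{2/3}$ for $n=6$, and $\|\varphi\|\,t_i^{-\frac{n+2}{2}}$ for $n\ge7$. These are exactly the $\sum_l$ terms in \eqref{niuzhen}. For the third group, I would use the pointwise bound
\[
\Big|w^{p}-\sum_{l}\bar\alpha_l^{p}\delta_{P_l,t_l}^{p}\Big|\le C\big(\delta_{P_1,t_1}^{p-1}\delta_{P_2,t_2}+\delta_{P_1,t_1}\delta_{P_2,t_2}^{p-1}\big)
\]
when $n\le5$, and the symmetric bound $C\,\delta_{P_1,t_1}^{p/2}\delta_{P_2,t_2}^{p/2}$ when $n\ge6$. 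I would then apply H\"older with exponents $\frac{2n}{n+2}$ and $\frac{2n}{n-2}$, Sobolev's inequality, and Lemma \ref{L2}, with $(\alpha,\beta)$ equal to $(\frac{8n}{(n+2)(n-2)},\frac{2n}{n+2})$ respectively $(\frac{n}{n-2},\frac{n}{n-2})$. This produces $(t_1t_2)^{-\frac{n-2}{2}}$ for $n=4,5$, and $(t_1t_2)^{-\frac{n+2}{4}}(\ln(t_1t_2))^{\frac{n+2}{2n}}$ for $n\ge6$, where the logarithm comes from the borderline case $\theta=\frac{n}{n-2}$ of Lemma \ref{L2}. These are the cross terms in \eqref{niuzhen}; for $n=6$ they match, since $\frac{n+2}{4}=2$ and $\frac{n+2}{2n}=\frac23$.

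The main obstacle is the first group, $K_i\bar\alpha_i^{p}\int\delta_{P_i,t_i}^{p}|\varphi|$. In the signed version this term is handled by orthogonality: $\int\delta_{P_i,t_i}^{p}\varphi=\frac{4}{n(n-2)}\langle\delta_{P_i,t_i},\varphi\rangle=0$ for $\varphi\in E_t$. Orthogonality gives no control once $\varphi$ is replaced by $|\varphi|$, and H\"older alone only yields $O(\|\varphi\|)$ with no decay in $t$. In fact I expect this term to be genuinely of order $\|\varphi\|$. A candidate counterexample is an axisymmetric $\varphi$ concentrated near $P_i$ at scale $t_i$ with a sign-changing radial profile, chosen orthogonal to $\delta_{P_l,t_l}$ and $\partial_{t_l}\delta_{P_l,t_l}$; for such $\varphi$, $\int\delta_{P_i,t_i}^{p}|\varphi|$ stays comparable to $\|\varphi\|$ as $t_i\to\infty$.

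So my first step would be to try to rule this scenario out. If that fails, as I expect, I would conclude that \eqref{niuzhen} as worded cannot hold for all $\varphi\in E_t$. The bound that is provable, and the only one Step 1 of Proposition \ref{infinite} needs through \eqref{sundewei----}, is the one where the absolute value sits outside the integral, $|\int Kw^{p}\varphi|$. In that version the first group vanishes identically and the two remaining groups are bounded by their own absolute values, as estimated above.
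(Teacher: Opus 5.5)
Your proposal is correct, and for the estimate that is actually provable it is the paper's proof. The paper starts from the signed identity $\int K(\sum_l\bar\alpha_l\delta_{P_l,t_l})^{\frac{n+2}{n-2}}\varphi=I_1+I_2$, where $\varphi\in E_t$ kills the $K_l\delta_{P_l,t_l}^{\frac{n+2}{n-2}}$ terms. It then bounds $I_1$ by \eqref{KE1-2} and $I_2$ by an elementary inequality, H\"older, Sobolev and Lemma \ref{L2}, exactly as in your second and third groups. So what is really proved is the bound for $|\int K(\sum_l\bar\alpha_l\delta_{P_l,t_l})^{\frac{n+2}{n-2}}\varphi|$, and that is all Step 1 of Proposition \ref{infinite} needs via \eqref{sundewei----}.

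Your objection to the literal version with $|\varphi|$ inside the integral is also right, and your counterexample works. Pull back a fixed axisymmetric $\psi\not\equiv0$ with $\int\psi=\int\psi\,x^{n+1}=0$ by the conformal action sending $1$ to $\delta_{P_1,t_1}$, then project onto $E_t$; the projection is a correction tending to $0$ as $t_1t_2\to\infty$. This gives $\int\delta_{P_1,t_1}^{\frac{n+2}{n-2}}|\varphi|\ge c\|\varphi\|$ uniformly for large $t$, so the absolute value in \eqref{niuzhen} belongs outside the integral.
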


\begin{proof}
By the fact $\varphi\in E_{t}$ and a direct computation, we have that 
\begin{align*}
\int_{\SSphere^{n}}K\Big(\sum\limits_{l=1}^{2}\bar \alpha_l \delta_{P_{l},t_{l}}\Big)^{\frac{n+2}{n-2}}\varphi&=\sum\limits_{l=1}^{2}
\bar \alpha_l^{\frac{n+2}{n-2}}\int_{\SSphere^{n}}(K-K_{l})\delta_{P_{l},t_{l}}^{\frac{n+2}{n-2}}\varphi\\
&\quad\quad+\int_{\SSphere^{n}}K\Big[\Big(\sum\limits_{l=1}^{2}\bar \alpha_l \delta_{P_{l},t_{l}}\Big)^{\frac{n+2}{n-2}}-\sum\limits_{l=1}^{2}\bar \alpha_l^{\frac{n+2}{n-2}} \delta_{P_{l},t_{l}}^{\frac{n+2}{n-2}}\Big]\varphi\\
&=:I_{1}+I_{2}.
\end{align*}
By (\ref{KE1-2}), we have that 
\begin{equation*}
|I_{1}|\leq C(n,K)\|\varphi\|\sum\limits_{l=1}^{2}\begin{cases}
t_{l}^{-(n-2)}&\text{ if }n=4,5\\
t_{l}^{-4}(\ln t_{i})^{\frac{2}{3}}&\text{ if }n=6,\\
t_{l}^{-\frac{n+2}{2}}&\text{ if }n\geq7.
\end{cases}
\end{equation*}

Now we estimate $I_{2}$. By the elementary inequality
\begin{equation*}
|(a+b)^{\frac{n+2}{n-2}}-a^{\frac{n+2}{n-2}}-b^{\frac{n+2}{n-2}}|\leq (a^{\frac{4}{n-2}}+b^{\frac{4}{n-2}})\min\{a,b\}
\end{equation*}
for $a, b \geq 0$, we have that 
\begin{equation*}
    |(a+b)^{\frac{n+2}{n-2}}-a^{\frac{n+2}{n-2}}-b^{\frac{n+2}{n-2}}|^{\frac{2n}{n-2}}\leq C(n)\begin{cases}
        a^{\frac{8n}{(n-2)(n+2)}}b^{\frac{2n}{n+2}}+a^{\frac{2n}{n+2}}b^{\frac{8n}{(n-2)(n+2)}}&\text{ if }n=4,5,\\
        (ab)^{\frac{n}{n-2}}&\text{ if }n\geq 6     .
    \end{cases}
\end{equation*}
By H\"older's inequality, the above inequality and Lemma \ref{L2}, we have that 
\begin{align*}
|I_{2}|&\leq C(n,K)\|\varphi\|\begin{cases}
   \sum\limits_{i\neq j}\left(\int_{\SSphere^{n}}(\delta_{P_{i},t_{i}}^{\frac{8n}{(n-2)(n+2)}}\delta_{P_{j},t_{j}}^{\frac{2n}{n+2}}\right)^{\frac{n+2}{2n}}&\text{ if }n=4,5,\\
    \left(\int_{\SSphere^{n}}(\delta_{P_{1},t_{1}}\delta_{P_{2},t_{2}})^{\frac{n}{n-2}}\right)^{\frac{n+2}{2n}}&\text{ if }n\geq6
\end{cases}\\
&\leq C(n,K)\|\varphi\|\begin{cases}
(t_{1}t_{2})^{-\frac{n-2}{2}}&\text{ if }n=4,5,\\
(t_{1}t_{2})^{-\frac{n+2}{4}}(\ln(t_{1}t_{2}))^{\frac{n+2}{2n}}&\text{ if }n \geq6.
\end{cases}
\end{align*}
Combining the above two estimates, we can obtain (\ref{niuzhen}).
    
\end{proof}

\begin{lem}\label{lidehuang}
There exists a constant $C=C(n)>0$ depending only on $n$ such that for $t_{1}$, $t_{2}>C$ and $|\alpha_{1}-\bar{\alpha}_{1}|\leq\frac{1}{2}\bar{\alpha}_{1}$, $|\alpha_{2}-\bar{\alpha}_{2}|\leq\frac{1}{2}\bar{\alpha}_{2}$, $v \in H^1_r(\mathbb{S}^n)$ and $u = \sum\limits_{l=1}^2 \alpha_l \delta_{P_t,t_l} + v$, we have that
\begin{equation}\label{yuanmeng}
\int_{\SSphere^{n}}K\Big||u|^{\frac{4}{n-2}}-(\sum\limits_{l=1}^{2}\bar{\alpha}_{l}\delta_{P_{l},t_{l}})^{\frac{4}{n-2}}\Big|\delta_{P_{i},t_{i}}^{2}\leq O(1)(\|v\|^{\frac{4}{n-2}}+\|v\|+|\alpha-\bar{\alpha}|^{\frac{4}{n-2}}+|\alpha-\bar{\alpha}|),
\end{equation}
\begin{equation}\label{yangxiu}
\int_{\SSphere^{n}}K\Big||u|^{\frac{4}{n-2}}-(\sum\limits_{l=1}^{2}\bar{\alpha}_{l}\delta_{P_{l},t_{l}})^{\frac{4}{n-2}}\Big|\delta_{P_{i},t_{i}}\delta_{P_{j},t_{j}}\leq O(1)(\|v\|^{\frac{4}{n-2}}+\|v\|+|\alpha-\bar{\alpha}|^{\frac{4}{n-2}}+|\alpha-\bar{\alpha}|),
\end{equation}
and for $\tilde{v}$, $\varphi\in H^1_r(\mathbb{S}^n)$,
\begin{equation}\label{ranshiyou}
\int_{\SSphere^{n}}K\Big||u|^{\frac{4}{n-2}}-(\sum\limits_{l=1}^{2}\bar{\alpha}_{l}\delta_{P_{l},t_{l}})^{\frac{4}{n-2}}\Big|\delta_{P_{i},t_{i}}|\tilde{v}|\leq O(1)\|\tilde{v}\|(\|v\|^{\frac{4}{n-2}}+\|v\|+|\alpha-\bar{\alpha}|^{\frac{4}{n-2}}+|\alpha-\bar{\alpha}|),
\end{equation}
\begin{equation}\label{zhupinxiu}
\int_{\SSphere^{n}}K\Big||u|^{\frac{4}{n-2}}-(\sum\limits_{l=1}^{2}\bar{\alpha}_{l}\delta_{P_{l},t_{l}})^{\frac{4}{n-2}}\Big||\tilde{v}||\varphi|\leq O(1)\|\tilde{v}\|\|\varphi\|(\|v\|^{\frac{4}{n-2}}+\|v\|+|\alpha-\bar{\alpha}|^{\frac{4}{n-2}}+|\alpha-\bar{\alpha}|).
\end{equation}
\end{lem}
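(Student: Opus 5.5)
The plan is to reduce all four estimates to pointwise bounds on the difference of the nonlinearities, and then apply H\"older, Sobolev and the bubble-interaction bounds of Lemma \ref{L2}. Write $w:=\sum_{l=1}^2\bar\alpha_l\delta_{P_l,t_l}>0$ and $h:=u-w=\sum_l(\alpha_l-\bar\alpha_l)\delta_{P_l,t_l}+v$. Set $p=\frac{4}{n-2}\in(0,2]$. We use the elementary inequality
\[
\big||a+b|^{p}-a^{p}\big|\leq C(p)\begin{cases}|b|^{p}, & 0<p\leq 1,\\ a^{p-1}|b|+|b|^{p}, & 1<p\leq 2,\end{cases}
\]
valid for $a>0$ and $b\in\RR$. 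This gives pointwise
\[
\big||u|^{p}-w^{p}\big|\leq C\big(|h|^{p}+w^{p-1}|h|\,\mathbf 1_{\{n=4,5\}}\big).
\]
Split $|h|\leq C|\alpha-\bar\alpha|\,w+|v|$. Since $K$ is bounded, every integral in \eqref{yuanmeng}--\eqref{zhupinxiu} is then dominated by a sum of terms of three types:
\[
|\alpha-\bar\alpha|^{q}\int w^{p}\,fg,\qquad \int |v|^{p}fg,\qquad \int w^{p-1}|v|\,fg,
\]
with $q\in\{p,1\}$. Here $f,g$ range over the factors $\delta_{P_i,t_i}$, $\delta_{P_j,t_j}$, $|\tilde v|$, $|\varphi|$.

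Each type is bounded by H\"older's inequality with exponents chosen so that every factor lands in $L^{\frac{2n}{n-2}}$. The factor $w^{p}$ or $|v|^{p}$ goes in $L^{n/2}$, and $w^{p-1}|v|$ goes in $L^{\frac{2n}{n-2}\cdot\frac{1}{p}}$. The total homogeneity is $\frac{2n}{n-2}$, so the exponents always match. For example, in \eqref{yuanmeng} the $v$-term is
\[
\int|v|^{p}\delta_{P_i,t_i}^{2}\leq \|v\|_{L^{\frac{2n}{n-2}}}^{p}\,\|\delta_{P_i,t_i}\|_{L^{\frac{2n}{n-2}}}^{2}\leq C\|v\|^{p},
\]
by Sobolev's inequality and the fact that $\|\delta_{P,t}\|_{L^{\frac{2n}{n-2}}}$ is independent of $t$ (this also follows from \eqref{AB1-}). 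The mixed term is treated the same way. Since $w^{p-1}\leq C(\delta_{P_1,t_1}^{p-1}+\delta_{P_2,t_2}^{p-1})$ when $p\geq1$, it gives $C\|v\|$. The $\alpha$-terms use $\int w^{\frac{2n}{n-2}}\leq C$ and give $C(|\alpha-\bar\alpha|^{p}+|\alpha-\bar\alpha|)$. Estimates \eqref{yangxiu}, \eqref{ranshiyou} and \eqref{zhupinxiu} follow identically, with $\delta_{P_j,t_j}$, $\tilde v$ and $\varphi$ (bounded in $L^{\frac{2n}{n-2}}$ by Sobolev) in place of the bubble factors. No smallness in $t$ is needed, and Lemma \ref{L2} is not even required, because the bounds claimed are only $O(1)$ times powers of the perturbation size.

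The main obstacle is the pointwise step. Since $p<1$ when $n\geq7$, the map $x\mapsto|x|^{p}$ is not Lipschitz, and the region where $u$ changes sign must be handled by the H\"older-continuity bound $\big||x|^{p}-|y|^{p}\big|\leq|x-y|^{p}$. This bound holds for all real $x,y$ and costs nothing, so the only care is in verifying the elementary inequality uniformly. For $1<p\leq2$ ($n=4,5$) one has to check that $\big||a+b|^{p}-a^{p}\big|\leq C(a^{p-1}|b|+|b|^{p})$ also holds when $a+b<0$. This follows from $|a+b|\leq|b|$ in that region.

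Finally, the assumption $|\alpha_l-\bar\alpha_l|\leq\frac12\bar\alpha_l$ is used only to keep all constants uniform.
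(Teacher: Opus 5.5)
Your proof is correct and follows essentially the paper's route: a pointwise bound on $\big||u|^{\frac{4}{n-2}}-w^{\frac{4}{n-2}}\big|$, followed by H\"older and Sobolev. The paper instead passes through the intermediate function $\xi=\sum_l\alpha_l\delta_{P_l,t_l}$ (mean value theorem for $\xi^{\frac{4}{n-2}}-w^{\frac{4}{n-2}}$, elementary inequality for $|u|^{\frac{4}{n-2}}-\xi^{\frac{4}{n-2}}$) and cites Lemma \ref{L2}; your single perturbation $h=u-w$ with $|h|\le C|\alpha-\bar\alpha|w+|v|$ makes Lemma \ref{L2}, the largeness of $t$, and the negative-power term $\delta_{P_i,t_i}^{\frac{4}{n-2}-1}\delta_{P_j,t_j}$ in the paper's $n\ge7$ bound unnecessary, at the harmless cost of getting $|\alpha-\bar\alpha|^{\frac{4}{n-2}}$ rather than $|\alpha-\bar\alpha|$ when $n\ge7$, which the right-hand side allows.
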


\begin{proof}
Let $\xi:=\sum\limits_{l=1}^{2}\alpha_{l}\delta_{P_{l},t_{l}}$. We will divide $|u|^{\frac{4}{n-2}}-(\sum\limits_{l=1}^{2}\bar{\alpha}_{l}\delta_{P_{l},t_{l}})^{\frac{4}{n-2}}$ into two parts, i.e., 
\begin{equation*}
|u|^{\frac{4}{n-2}}-(\sum\limits_{l=1}^{2}\bar{\alpha}_{l}\delta_{P_{l},t_{l}})^{\frac{4}{n-2}}=\left(|u|^{\frac{4}{n-2}}-\xi^{\frac{4}{n-2}}\right)+\Big[\xi^{\frac{4}{n-2}}-\Big(\sum\limits_{l=1}^{2}\bar{\alpha}_{l}\delta_{P_{l},t_{l}}\Big)^{\frac{4}{n-2}}\Big]
\end{equation*}

By the mean value theorem, we have that 
\begin{equation*}
 \Big|\xi^{\frac{4}{n-2}}-\Big(\sum\limits_{l=1}^{2}\bar{\alpha}_{l}\delta_{P_{l},t_{l}}\Big)^{\frac{4}{n-2}}\Big| \leq C(n,K)\begin{cases}
      (|\alpha-\bar{\alpha}|+ |\alpha-\bar{\alpha}|^{\frac{4}{n-2}})\sum\limits_{l=1}^{2}\delta_{P_{l},t_{l}}^{\frac{4}{n-2}}&\text{ if }n=4,5,6,\\
      |\alpha-\bar{\alpha}|(\delta_{P_{i},t_{i}}^{\frac{4}{n-2}}+\delta_{P_{i},t_{i}}^{\frac{4}{n-2}-1}\delta_{P_{j},t_{j}})&\text{ if }n\geq7.
 \end{cases}  
\end{equation*}
We also have that 
\begin{align*}
   | |u|^{\frac{4}{n-2}}-\xi^{\frac{4}{n-2}}|&\leq C(n)\begin{cases}
   \xi^{\frac{4}{n-2}-1}|v|+|v|^{\frac{4}{n-2}}&\text{ if }n=4,5,\\
   |v|^{\frac{4}{n-2}}&\text{ if }n\geq6
   \end{cases}\\
   &\leq C(n,K)\begin{cases}
   (\sum\limits_{l=1}^{2}\delta_{P_{l},t_{l}}^{\frac{6-n}{n-2}})|v|+|v|^{\frac{4}{n-2}}&\text{ if }n=4,5,\\
   |v|^{\frac{4}{n-2}}&\text{ if }n\geq6.
   \end{cases}
   \end{align*}
 Combining the above two inequalities together, we have that 
\begin{align*}
  &\quad  \Big| |u|^{\frac{4}{n-2}}-\Big(\sum\limits_{l=1}^{2}\bar{\alpha}_{l}\delta_{P_{l},t_{l}}\Big)^{\frac{4}{n-2}}\Big|\\
  &\leq C(n,K)
\begin{cases}
    (|\alpha-\bar{\alpha}|+ |\alpha-\bar{\alpha}|^{\frac{4}{n-2}})\sum\limits_{l=1}^{2}\delta_{P_{l},t_{l}}^{\frac{4}{n-2}}+(\sum\limits_{l=1}^{2}\delta_{P_{l},t_{l}}^{\frac{6-n}{n-2}})|v|+|v|^{\frac{4}{n-2}}&\text{ if }n=4,5,\\
 (|\alpha-\bar{\alpha}|+ |\alpha-\bar{\alpha}|^{\frac{4}{n-2}})\sum\limits_{l=1}^{2}\delta_{P_{l},t_{l}}^{\frac{4}{n-2}}+|v|^{\frac{4}{n-2}}&\text{ if }n=6,\\
    |\alpha-\bar{\alpha}|(\delta_{P_{i},t_{i}}^{\frac{4}{n-2}}+\delta_{P_{i},t_{i}}^{\frac{4}{n-2}-1}\delta_{P_{j},t_{j}})+|v|^{\frac{4}{n-2}}&\text{ if }n\geq7.
\end{cases}
\end{align*}  
By the above inequality, H\"older's inequality and Lemma \ref{L2}, we can obtain (\ref{yuanmeng})-(\ref{zhupinxiu}).
\end{proof}

\subsection{Estimates specific to the finite-dimensional reduction}

\begin{lem}\label{zhognshizhang}
Let $t_1, t_2 \geq 2$ and $t \mapsto v(t) \in E_t$ be $C^1$. Then
\begin{align}
&(1-\Pi_{t})(\frac{\partial v}{\partial t_{i}})\nonumber\\
&\quad=\left\langle v,\frac{\partial^{2}\delta_{P_{i},t_{i}}}{\partial t_{i}^{2}}\right\rangle\Big[C_{1}t_{j}^{-1}(t_{1}t_{2})^{-(n-3)}(1+o(1))\delta_{P_{i},t_{i}}-C_{2}t_{i}^{2}(1+o(1))\frac{\partial\delta_{P_{i},t_{i}}}{\partial t_{i}}\nonumber\\
&\quad\quad-C_{3}t_{j}^{-1}(t_{1}t_{2})^{-\frac{n-4}{2}}(1+o(1))\delta_{P_{j},t_{j}}+C_{4}(t_{1}t_{2})^{-\frac{n-4}{2}}(1+o(1))\frac{\partial\delta_{P_{j},t_{j}}}{\partial t_{j}}\Big],\label{zhongchangjiang}
\end{align}
where $C_{1}$, $C_{2}$, $C_{3}$, $C_{4}$ are positive constants depending on $n$ and $K$, and $o(1)$ denotes some scalar tending to $0$ as $t_1, t_2 \rightarrow \infty$. Moreover,
\begin{equation}\label{high}
\left|\left\langle v,\frac{\partial^{2}\delta_{P_{i},t_{i}}}{\partial t_{i}^{2}}\right\rangle\right|\leq C(n)t_{i}^{-2}\|v\|.
\end{equation}
\end{lem}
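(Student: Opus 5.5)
We will write $(1-\Pi_t)w$ for $w=\partial v/\partial t_i$ as its orthogonal projection onto the four-dimensional span $F_t:=\mathrm{span}\{\delta_{P_1,t_1},\partial_{t_1}\delta_{P_1,t_1},\delta_{P_2,t_2},\partial_{t_2}\delta_{P_2,t_2}\}$.

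\emph{Step 1: inner products of $w$ with the basis.} Since $v(t)\in E_t$ for every $t$, we differentiate the four orthogonality relations in $t_i$:
\[
\langle v,\delta_{P_l,t_l}\rangle=0,\qquad \langle v,\partial_{t_l}\delta_{P_l,t_l}\rangle=0,\qquad l=1,2 .
\]
For $l=j$ the basis functions do not depend on $t_i$, so
\[
\langle w,\delta_{P_j,t_j}\rangle=\langle w,\partial_{t_j}\delta_{P_j,t_j}\rangle=0 .
\]
For $l=i$ we obtain
\[
\langle w,\delta_{P_i,t_i}\rangle=-\langle v,\partial_{t_i}\delta_{P_i,t_i}\rangle=0,\qquad
\langle w,\partial_{t_i}\delta_{P_i,t_i}\rangle=-\langle v,\partial^2_{t_i}\delta_{P_i,t_i}\rangle .
\]
So the right-hand side vector of inner products is $-\langle v,\partial^2_{t_i}\delta_{P_i,t_i}\rangle\, e$, where $e$ is the coordinate vector corresponding to $\partial_{t_i}\delta_{P_i,t_i}$. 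This explains why the common scalar factor $\langle v,\partial^2_{t_i}\delta_{P_i,t_i}\rangle$ appears in \eqref{zhongchangjiang}.

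\emph{Step 2: invert the Gram matrix.} Writing $(1-\Pi_t)w=\sum c_k f_k$, the coefficient vector is $c=-\langle v,\partial^2_{t_i}\delta\rangle\, G^{-1}e$, where $G$ is the $4\times4$ Gram matrix of $F_t$. Its entries are known from Lemma \ref{A1} and Lemma \ref{A2}:
\begin{itemize}
\item the diagonal entries are $c_0$ and $c't_l^{-2}$;
\item $\langle \delta_{P_l,t_l},\partial_{t_l}\delta_{P_l,t_l}\rangle=0$;
\item the cross terms are of order $(t_1t_2)^{-\frac{n-2}{2}}$, $t_l^{-1}(t_1t_2)^{-\frac{n-2}{2}}$ and $(t_1t_2)^{-\frac n2}$, with explicit constants and signs.
\end{itemize}
We rescale each $\partial_{t_l}\delta$ by $t_l$, writing $G=DG'D$ with $D=\mathrm{diag}(1,t_1^{-1},1,t_2^{-1})$. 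Then $G'$ is the invertible block-diagonal matrix $\mathrm{diag}(c_0,c',c_0,c')$ plus off-diagonal entries $O((t_1t_2)^{-\frac{n-2}{2}})$. A Neumann series gives $G'^{-1}$ to first order in the off-diagonal entries. Hence $G^{-1}e=D^{-1}G'^{-1}D^{-1}e$ has the following components:
\begin{itemize}
\item the leading component $\sim t_i^2/c'$ on $\partial_{t_i}\delta_{P_i,t_i}$, which gives the $C_2t_i^2$ term;
\item components on $\delta_{P_j,t_j}$ and $\partial_{t_j}\delta_{P_j,t_j}$ produced by a single off-diagonal entry;
\item a component on $\delta_{P_i,t_i}$, which is only second order, since the direct $(\delta_i,\partial\delta_i)$ entry vanishes.
\end{itemize}
Reading off the powers of $t_1,t_2$ and the signs from \eqref{A2-1}--\eqref{A2-3} produces the four terms with positive constants $C_1,\dots,C_4$, each with a $(1+o(1))$ factor coming from the error terms $O((t_1t_2)^{-2}\ln(t_1t_2))$ and the Neumann remainder.

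\emph{Step 3: the bound \eqref{high}.} By Cauchy--Schwarz, $|\langle v,\partial^2_{t_i}\delta\rangle|\le \|v\|\,\|\partial^2_{t_i}\delta_{P_i,t_i}\|$. From the explicit formula for $\partial_{t}\delta$, the second derivative is $t_i^{-2}$ times a bubble-shaped profile, so a scaling computation as in Lemma \ref{A1} gives $\|\partial^2_{t_i}\delta\|\le C(n)t_i^{-2}$.

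\emph{Main obstacle.} The hard step is the bookkeeping in Step 2. We must identify exactly which product of off-diagonal Gram entries gives the leading coefficient on each basis element, with the correct sign and power, especially the second-order $\delta_{P_i,t_i}$ coefficient $C_1t_j^{-1}(t_1t_2)^{-(n-3)}$. We also need the relative errors to be genuinely $o(1)$ uniformly. We would handle this by computing the adjugate/cofactor expansion of $G'$ explicitly, rather than relying on a generic perturbation argument.
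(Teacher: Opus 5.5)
Your proposal is correct and follows essentially the paper's route. The paper also writes $(1-\Pi_t)(\partial v/\partial t_i)$ in the four-dimensional complement $V_t$, using a Gram--Schmidt orthogonal basis $e_1,\dots,e_4$ instead of inverting the Gram matrix (equivalent); the only nonzero pairing is $\langle \partial_{t_i}v,\partial_{t_i}\delta_{P_i,t_i}\rangle=-\langle v,\partial^2_{t_i}\delta_{P_i,t_i}\rangle$, and Lemmas \ref{A1}--\ref{A2} supply the stated powers and signs, including the same-sign second-order $\delta_{P_i,t_i}$ coefficient $\propto t_i(t_1t_2)^{-(n-2)}$. For \eqref{high} the paper expresses the pairing via the bubble equation and applies $|\partial^2_{t_i}\delta_{P_i,t_i}|\le C t_i^{-2}\delta_{P_i,t_i}$ with H\"older, which is equivalent to your Cauchy--Schwarz bound $\|\partial^2_{t_i}\delta_{P_i,t_i}\|\le C(n)t_i^{-2}$.
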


\begin{proof}

Let $V_{t}$ denote the orthogonal complement space of $E_{t}$ with respect to the $H^{1}$ inner product $\langle\cdot,\cdot\rangle$, i.e.,
\begin{equation*}
    V_{t}=\mbox{span}\{e_{1},e_{2},e_{3},e_{4}\},
\end{equation*}
where 
\begin{equation*}
e_{1}:=\delta_{P_{i},t_{i}},    
\end{equation*}
\begin{equation*}
e_{2}:=\frac{\partial \delta_{P_{i},t_{i}}}{\partial t_{i}},
\end{equation*}
\begin{equation*}
    e_{3}:=\delta_{P_{j},t_{j}}-\frac{\langle\delta_{P_{j},t_{j}},e_{1}\rangle}{\langle e_{1},e_{1}\rangle}e_{1}-\frac{\langle\delta_{P_{j},t_{j}},e_{2}\rangle}{\langle e_{2},e_{2}\rangle}e_{2},
\end{equation*}
and 
\begin{equation*}
    e_{4}:=\frac{\partial\delta_{P_{j},t_{j}}}{\partial t_{j}}-\frac{\langle\frac{\partial\delta_{P_{j},t_{j}}}{\partial t_{j}},e_{1}\rangle}{\langle e_{1},e_{1}\rangle}e_{1}-\frac{\langle\frac{\partial\delta_{P_{j},t_{j}}}{\partial t_{j}},e_{2}\rangle}{\langle e_{2},e_{2}\rangle}e_{2}-\frac{\langle\frac{\partial\delta_{P_{j},t_{j}}}{\partial t_{j}},e_{3}\rangle}{\langle e_{3},e_{3}\rangle}e_{3}.
\end{equation*}
Then $(1-\Pi_{t})(\frac{\partial v}{\partial t_{i}})$ can be represented as 
\begin{equation*}
    (1-\Pi_{t})(\frac{\partial v}{\partial t_{i}})=\frac{\langle\frac{\partial v}{\partial t_{i}},e_{1}\rangle}{\langle e_{1},e_{1}\rangle}e_{1}+\frac{\langle\frac{\partial v}{\partial t_{i}},e_{2}\rangle}{\langle e_{2},e_{2}\rangle}e_{2}+\frac{\langle\frac{\partial v}{\partial t_{i}},e_{3}\rangle}{\langle e_{3},e_{3}\rangle}e_{3}+\frac{\langle\frac{\partial v}{\partial t_{i}},e_{4}\rangle}{\langle e_{4},e_{4}\rangle}e_{4}.
\end{equation*}
Then (\ref{zhongchangjiang}) follows from the above equality and the estimates in Lemma \ref{A1} and \ref{A2}

A direct computation gives that 
\begin{align*}
 \left\langle v,\frac{\partial^{2}\delta_{P_{i},t_{i}}}{\partial t_{i}^{2}}\right\rangle&=-\int_{\SSphere^{n}}L_{g_{0}}(\frac{\partial^{2}\delta_{P_{i},t_{i}}}{\partial t_{i}^{2}})v
 =\int_{\SSphere^{n}} \frac{\partial^{2}}{\partial t_{i}^{2}} (-L_{g_{0}}\delta_{P_{i},t_{i}})v
    = \frac{n(n-2)}{4}\int_{\SSphere^{n}}\frac{\partial^{2}\delta_{P_{i},t_{i}}^{\frac{n+2}{n-2}}}{\partial t_{i}^{2}}v\\
 &=\frac{n(n+2)}{4}\int_{\SSphere^{n}}\left(\delta_{P_{i},t_{i}}^{\frac{4}{n-2}}\frac{\partial^{2} \delta_{P_{i},t_{i}}}{\partial t_{i}^{2}}+\frac{4}{n-2}\delta_{P_{i},t_{i}}^{\frac{4}{n-2}-1}(\frac{\partial\delta_{P_{i},t_{i}}}{\partial t_{i}})^{2}\right)v.
\end{align*}
Recall that 
\begin{equation*}
    \Big|\frac{\partial \delta_{P_{i},t_{i}}}{\partial t_{i}}\Big| \leq C(n)t_i^{-1} \delta_{P_i,t_i},~~    \Big|\frac{\partial^{2} \delta_{P_{i},t_{i}}}{\partial t_{i}^{2}}\Big| \leq C(n)t_i^{-2} \delta_{P_i,t_i}.
\end{equation*}
Then (\ref{high}) can be obtained by the above estimates and H\"older's inequality.
\end{proof}

\begin{lem}\label{liutengfei}
Let $t_1, t_2$, $\alpha(t), v(t)$ be as in Proposition \ref{infinite}, and $u = \sum\limits_{l=1}^2 \alpha_l \delta_{P_l,t_l} + v$. Then 
\begin{align}\label{linyang}
\left|\int_{\SSphere^{n}}K|u|^{\frac{4}{n-2}}u\frac{\partial \delta_{P_{i},t_{i}}}{\partial t_{i}}\right|
    &\leq O(1)t_{i}^{-1}\big(t_{i}^{-(n-2)}+(t_{1}t_{2})^{-\frac{n-2}{2}}\big),
\\
\label{helin}
\left|\int_{\SSphere^{n}}K|u|^{\frac{4}{n-2}}u\delta_{P_{i},t_{i}}\right|
    &\leq O(1),
\end{align}
and
\begin{align}
&\left|\int_{\SSphere^{n}}K\left(|u|^{\frac{4}{n-2}}u-(\sum\limits_{l=1}^{2}\bar{\alpha}_{l}\delta_{P_{l},t_{l}})^{\frac{n+2}{n-2}}\right)\frac{\partial \delta_{P_{i},t_{i}}}{\partial t_{i}}\right|\nonumber\\
&\quad\leq O(1)t_{i}^{-1}\begin{cases}
(t_{1}t_{2})^{-(n-2)}+\sum\limits_{l=1}^{2}t_{l}^{-2(n-2)},&n=4,5,\\
(t_{1}t_{2})^{-4}(\ln(t_{1}t_{2}))^{\frac{4}{3}}+\sum\limits_{l=1}^{2}t_{l}^{-8}(\ln t_{l})^{\frac{4}{3}},&n=6,\\
(t_{1}t_{2})^{-\frac{n+2}{2}}(\ln(t_{1}t_{2}))^{\frac{n+2}{n}}+\sum\limits_{l=1}^{2}t_{l}^{-(n+2)},&n\geq 7.\end{cases}\label{lunyang}
\end{align}

\end{lem}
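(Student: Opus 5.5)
The three bounds in Lemma \ref{liutengfei} come from splitting $u=\xi+v$ with $\xi=\sum_l\alpha_l\delta_{P_l,t_l}$, and then using three ingredients: the bounds \eqref{yujia} on $|\alpha-\bar\alpha|$ and $\|v\|$ from Proposition \ref{infinite}, the pointwise bound $|\partial_{t_i}\delta_{P_i,t_i}|\le C(n)t_i^{-1}\delta_{P_i,t_i}$ from \eqref{A16-P1}, and the orthogonality $v\in E_t$. Write $\varepsilon(t)$ for the right-hand side of \eqref{yujia}. Note that the right-hand side of \eqref{lunyang} is exactly $t_i^{-1}\varepsilon(t)^2$, up to constants.

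\textbf{Estimate \eqref{helin}.} This is immediate. Since $|\alpha_l|\le 2\bar\alpha_l$, Hölder's and Sobolev's inequalities give
\[
\Big|\int K|u|^{\frac{4}{n-2}}u\,\delta_{P_i,t_i}\Big|\le C\|u\|_{L^{2n/(n-2)}}^{\frac{n+2}{n-2}}\|\delta_{P_i,t_i}\|_{L^{2n/(n-2)}}\le C .
\]

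\textbf{Estimate \eqref{lunyang}.} I split the integrand difference as
\[
|u|^{\frac4{n-2}}u-\bar\xi^{\frac{n+2}{n-2}}=\Big(\xi^{\frac{n+2}{n-2}}-\bar\xi^{\frac{n+2}{n-2}}\Big)+\tfrac{n+2}{n-2}\,\xi^{\frac4{n-2}}v+N(v),
\]
where $\bar\xi=\sum_l\bar\alpha_l\delta_{P_l,t_l}$ and $|N(v)|\le C(\xi^{\frac{6-n}{n-2}}v^2+|v|^{\frac{n+2}{n-2}})$. The last form is for $n\le 6$; for $n\ge7$ one has simply $|N(v)|\le C|v|^{\frac{n+2}{n-2}}$. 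Each piece is then handled as follows.

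\begin{enumerate}
\item \emph{The $\alpha$-difference.} Expanding in $\alpha-\bar\alpha$, the linear term is $\sum_q(\alpha_q-\bar\alpha_q)\int K\bar\xi^{\frac4{n-2}}\delta_{P_q,t_q}\partial_{t_i}\delta_{P_i,t_i}$. By Lemma \ref{Ayd} together with \eqref{A4-1}–\eqref{A4-2}, this integral is $O(t_i^{-1}\varepsilon(t))$, hence the term is $O(t_i^{-1}\varepsilon^2)$. The quadratic remainder is $O(t_i^{-1}|\alpha-\bar\alpha|^2)$.
\item \emph{The linear term in $v$.} I write it as
\[
\tfrac{n+2}{n-2}\int K\xi^{\frac4{n-2}}v\,\partial_{t_i}\delta_{P_i,t_i}.
\]
I replace $K\xi^{\frac4{n-2}}$ by $K_i\alpha_i^{\frac4{n-2}}\delta_{P_i,t_i}^{\frac4{n-2}}$. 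The main part then equals a multiple of $\langle v,\partial_{t_i}\delta_{P_i,t_i}\rangle$, which vanishes because $v\in E_t$. The two replacement errors are controlled by \eqref{KE1-2} (for $K-K_i$) and by \eqref{V1-2} (for the cross term), after multiplying by $t_i^{-1}$ via \eqref{A16-P1}. Both errors are bounded by $t_i^{-1}\|v\|\,\varepsilon(t)$, since these are precisely the rates defining $\varepsilon$. The factor $\alpha_i^{\frac4{n-2}}$ versus $\bar\alpha_i^{\frac4{n-2}}$ only adds harmless higher-order terms.
\item \emph{The nonlinear term.} Using Hölder's inequality, $N(v)$ contributes $O(t_i^{-1}(\|v\|^2+\|v\|^{\frac{n+2}{n-2}}))$, which is $\le Ct_i^{-1}\|v\|^2$ because $\|v\|\le 1$ and $\frac{n+2}{n-2}>1$. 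For $n\ge7$ this needs care: the weight $\xi^{\frac{6-n}{n-2}}$ is no longer available, so I would instead use $|v|^{\frac{n+2}{n-2}}\,t_i^{-1}\delta_{P_i,t_i}$. This is bounded by $t_i^{-1}\|v\|^{\frac{n+2}{n-2}}$, which is at least as good as $t_i^{-1}\|v\|^2$ given the smallness of $\|v\|$. If that turns out too weak, I would restrict to the region $|v|\le\xi$, where the quadratic bound is recovered.
\end{enumerate}

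Summing the three pieces gives \eqref{lunyang}.

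\textbf{Estimate \eqref{linyang}.} This follows from \eqref{lunyang} combined with Lemma \ref{Axd}, which bounds the $\bar\xi$ term by $O(t_i^{-1}(t_i^{-(n-2)}+(t_1t_2)^{-\frac{n-2}{2}}))$. The main obstacle is the cancellation in step 2 of \eqref{lunyang}: it requires tracking exactly which replacement errors match the rates in \eqref{yujia}.
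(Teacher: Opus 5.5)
Your decomposition is the paper's: the $\alpha$-difference is $I_4+I_5$, the linear term in $v$ is $I_1+I_2$, and $N(v)$ is $I_3$. Steps 1 and 2 are correct. Your remark that the main linear part vanishes identically because $\langle v,\partial_{t_i}\delta_{P_i,t_i}\rangle=0$ is cleaner than the paper's rewriting through $(1-\Pi_t)\partial_{t_i}v$, which is in fact zero as well.

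\textbf{The gap is in step 3 for $n\ge 7$.} There $p:=\frac{n+2}{n-2}\in(1,2)$. Since $\|v\|<1$, this gives $\|v\|^{p}\ge\|v\|^{2}$, not $\le$. So the bound $t_i^{-1}\|v\|^{p}$ is strictly worse than $t_i^{-1}\|v\|^{2}$. With $\|v\|\lesssim\varepsilon(t)$ it only yields $t_i^{-1}\varepsilon(t)^{p}$, which is much larger than the required $t_i^{-1}\varepsilon(t)^{2}$, so \eqref{lunyang} does not follow.

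For $n\le 6$ your conclusion is right, but the reason is $p\ge 2$, not $p>1$. Your fallback of restricting to $\{|v|\le\xi\}$ recovers the quadratic bound there. It leaves the region $\{|v|>\xi\}$ untreated, and that is exactly where the pure power $|v|^{p}$ cannot be avoided.

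\textbf{The missing idea is to use the weight $\partial_{t_i}\delta_{P_i,t_i}$ itself on that region.}
\begin{itemize}
\item On $\{|v|\ge \xi/2\}$ you have $|\partial_{t_i}\delta_{P_i,t_i}|\le Ct_i^{-1}\delta_{P_i,t_i}\le Ct_i^{-1}\xi\le Ct_i^{-1}|v|$. Hence $|N(v)\,\partial_{t_i}\delta_{P_i,t_i}|\le Ct_i^{-1}|v|^{\frac{2n}{n-2}}$, whose integral is at most $Ct_i^{-1}\|v\|^{\frac{2n}{n-2}}\le Ct_i^{-1}\|v\|^{2}$.
\item On $\{|v|<\xi/2\}$ one has $|N(v)|\le C\xi^{\frac{6-n}{n-2}}v^{2}$ for every $n$, and $\xi^{\frac{6-n}{n-2}}\delta_{P_i,t_i}\le C\xi^{\frac{4}{n-2}}$. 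This gives a contribution at most $Ct_i^{-1}\int_{\SSphere^n}\xi^{\frac{4}{n-2}}v^{2}\le Ct_i^{-1}\|v\|^{2}$.
\end{itemize}
This is how the paper bounds $I_3$. With it, your argument closes in all dimensions $n\ge 4$.
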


\begin{proof}
Let $\xi:=\sum\limits_{l=1}^{2}\alpha_{l}\delta_{P_{l},t_{l}}$ and $\bar{\xi}:=\sum\limits_{l=1}^{2}\bar{\alpha}_{l}\delta_{P_{l},t_{l}}$.

By Proposition \ref{infinite}, we have $|\alpha - \bar \alpha| + \|v\| = o(1)$ as $t_1, t_2 \rightarrow \infty$. Thus, by \eqref{AB1-} and \eqref{A2-1},
\begin{equation}
\|\delta_{P_i,t_i}\| + |\alpha_i| + |\alpha_i|^{-1}+ \|v\| + \|u\| = O(1).
	\label{lft-P1}
\end{equation}
This implies \eqref{helin}.

For the remaining two estimates, we only need to prove \eqref{lunyang}, as \eqref{linyang} follows from \eqref{lunyang} and estimate \eqref{Axd-1} in Lemma \ref{Axd}. To prove \eqref{lunyang}, we split the integral on the left hand side of \eqref{lunyang}:
\begin{align*}
I := \int_{\SSphere^{n}}K\left(|u|^{\frac{4}{n-2}}u-\bar{\xi}^{\frac{n+2}{n-2}}\right)\frac{\partial \delta_{P_{i},t_{i}}}{\partial t_{i}}
 &=\frac{n+2}{n-2}\alpha_{i}^{\frac{4}{n-2}}\int_{\SSphere^{n}}K \delta_{P_{i},t_{i}}^{\frac{4}{n-2}}\frac{\partial \delta_{P_{i},t_{i}}}{\partial t_{i}}v\\
&\quad + \frac{n+2}{n-2}\int_{\SSphere^{n}}K\left(\xi^{\frac{4}{n-2}}-(\alpha_{i}\delta_{P_{i},t_{i}})^{\frac{4}{n-2}}\right)v\frac{\partial \delta_{P_{i},t_{i}}}{\partial t_{i}}\\
&\quad +\int_{\SSphere^{n}}K\left(|u|^{\frac{4}{n-2}}u-\xi^{\frac{n+2}{n-2}}-\frac{n+2}{n-2}\xi^{\frac{4}{n-2}}v\right)\frac{\partial \delta_{P_{i},t_{i}}}{\partial t_{i}}\\
&\quad+\int_{\SSphere^{n}}K\left(\xi^{\frac{n+2}{n-2}}-\bar{\xi}^{\frac{n+2}{n-2}} - \frac{n+2}{n-2} \bar\xi^{\frac{4}{n-2}}(\xi - \bar \xi)\right)\frac{\partial \delta_{P_{i},t_{i}}}{\partial t_{i}}\\
&\quad+ \frac{n+2}{n-2} \int_{\SSphere^{n}}K  \bar\xi^{\frac{4}{n-2}}(\xi - \bar \xi) \frac{\partial \delta_{P_{i},t_{i}}}{\partial t_{i}}\\
&=:I_{1}+I_{2}+I_{3} + I_4 + \frac{n+2}{n-2} I_5.
\end{align*}

To estimate $I_{1}$, we note that 
\begin{align*}
    I_{1}&=\frac{n+2}{n-2}\alpha_{i}^{\frac{4}{n-2}}\int_{\SSphere^{n}}(K-K_{i})\delta_{P_{i},t_{i}}^{\frac{4}{n-2}}\frac{\partial \delta_{P_{i},t_{i}}}{\partial t_{i}}v
        -\alpha_{i}^{\frac{4}{n-2}}K_{i}\int_{\SSphere^{n}}\delta_{P_{i},t_{i}}^{\frac{n+2}{n-2}}(1-\Pi_{t})(\frac{\partial v}{\partial t_{i}}).
\end{align*}
In view of \eqref{A16-P1}, the first term can be bounded using \eqref{KE1-2}. The second term can be bounded using (\ref{zhongchangjiang}), (\ref{high}), (\ref{AB1-}), (\ref{A2-1}) and (\ref{A2-2}). We obtain
\begin{equation*}
|I_{1}|\leq O(1)t_{i}^{-1}\|v\|\begin{cases}
(t_{1}t_{2})^{-(n-2)} + t_{i}^{-(n-2)}&\text{ if }n=4,5\\
(t_{1}t_{2})^{-4} + t_{i}^{-4}(\ln t_{i})^{\frac{2}{3}}&\text{ if }n=6,\\
(t_{1}t_{2})^{-(n-2)} + t_{i}^{-\frac{n+2}{2}}&\text{ if }n\geq7.
\end{cases}
\end{equation*}

In view of \eqref{lft-P1} and \eqref{A16-P1}, we can use the proof of \eqref{V1-2} to show that
\[
|I_2| \leq O(1)  t_i^{-1}\|v\| \begin{cases}
(t_{1}t_{2})^{-\frac{n-2}{2}}&\text{ if }n=4,5,\\
(t_{1}t_{2})^{-\frac{n+2}{4}}(\ln(t_{1}t_{2}))^{\frac{n+2}{2n}}&\text{ if }n\geq 6.
\end{cases}
\]

For $I_{3}$, observe that
\[
\Big|
|u|^{\frac{4}{n-2}}u
-
\xi^{\frac{n+2}{n-2}}
-
\frac{n+2}{n-2}
\xi^{\frac{4}{n-2}}v
\Big|  
\le
\begin{cases}
O(1) |v|^{\frac{n+2}{n-2}}
& \text{if } |v| \ge \frac{1}{2} \xi, \\ 
O(1) \xi^{\frac{6-n}{n-2}}|v|^2
& \text{if } |v| < \frac{1}{2}\xi,
\end{cases}
\]
and, by \eqref{A16-P1} and \eqref{lft-P1}, $|\frac{\partial \delta_{P_{i},t_{i}}}{\partial t_{i}}| \leq O(1) t_{i}^{-1} \delta_{P_{i},t_{i}} \leq O(1) t_i^{-1}\xi$. Therefore
\[
|I_3| \leq O(1) t_i^{-1} \int_{\mathbb{S}^n} \big(|v|^{\frac{2n}{n-2}} + \xi^{\frac{4}{n-2}} |v|^2\big)
	\leq O(1) t_i^{-1} \big(\|v\|^{\frac{2n}{n-2}} + \|v\|^2\big) \leq O(1) t_i^{-1} \|v\|^2.
\]

For $I_4$, since $|\alpha - \bar \alpha| = o(1)$, we have
\[
\Big|
\xi^{\frac{n+2}{n-2}}- \bar \xi^{\frac{n+2}{n-2}}
	- \frac{n+2}{n-2}\bar \xi^{\frac{4}{n-2}} (\xi - \bar \xi)
\Big|  
\le O(1) \bar \xi^{\frac{6-n}{n-2}}|\xi - \bar \xi|^2,
\]
which together with \eqref{A16-P1} implies
\[
|I_4| \leq O(1) t_i^{-1} |\alpha - \bar \alpha|^2.
\]

For $I_5$, we compute
\[
I_5 = (\alpha_i - \bar \alpha_i) \int_{\mathbb{S}^n} K \bar \xi^{\frac{4}{n-2}} \delta_{P_i,t_i} \frac{\partial \delta_{P_i,t_i}}{\partial t_i} 
	+ (\alpha_j - \bar \alpha_j) \int_{\mathbb{S}^n} K \bar \xi^{\frac{4}{n-2}} \delta_{P_j,t_j} \frac{\partial \delta_{P_i,t_i}}{\partial t_i} =: I_{51} + I_{52}.
\]
By Lemma \ref{Ayd}, 
\[
|I_{51}| \leq O(1) t_i^{-1}|\alpha - \bar \alpha| \begin{cases} 
t_i^{-(n-2)} + (t_1t_2)^{-\frac{n-2}{2}}& \text{ if } n = 4,5,\\
t_i^{-(n-2)} + (t_1t_2)^{-\frac{n+2}{4}} & \text{ if } n \geq 6.
\end{cases}
\]
By \eqref{A16-P1} and Lemma \ref{L2},
\[
|I_{52}| \leq O(1)t_i^{-1}|\alpha - \bar \alpha|\int_{\mathbb{S}^n} \Big(\delta_{P_i,t_i}^{\frac{n+2}{n-2}} \delta_{P_j,t_j} + \delta_{P_i,t_i} \delta_{P_j,t_j} ^{\frac{n+2}{n-2}}\Big) \leq O(1)t_i^{-1}|\alpha - \bar \alpha| (t_1t_2)^{-\frac{n-2}{2}}.
\]
Altogether, we have
\begin{align*}
|I| &\leq O(1) \big(|\alpha - \bar \alpha| + \|v\|\big)^2\\
		&\qquad  
		+ O(1) \big(|\alpha - \bar \alpha| + \|v\|\big) 
		\begin{cases}
		(t_{1}t_{2})^{-\frac{n-2}{2}} + t_{i}^{-(n-2)}&\text{ if }n=4,5\\
		(t_{1}t_{2})^{-2}(\ln(t_{1}t_{2}))^{\frac{2}{3}}  + t_{i}^{-4}(\ln t_{i})^{\frac{2}{3}}&\text{ if }n=6,\\
		(t_{1}t_{2})^{-\frac{n+2}{4}}(\ln(t_{1}t_{2}))^{\frac{n+2}{2n}}  + t_{i}^{-\frac{n+2}{2}}&\text{ if }n\geq7.
		\end{cases}
\end{align*}
Inserting the estimate for $|\alpha - \bar \alpha| + \|v\|$ in Proposition \ref{infinite}, we obtain \eqref{lunyang}.
 \end{proof}

\subsubsection*{Rights retention statement} For the purpose of Open Access, the authors have applied a CC BY public copyright licence to any Author Accepted Manuscript (AAM) version arising from this submission.

\end{document}